\newtheorem{theorem}{Theorem}[section]
\newtheorem{corollary}[theorem]{Corollary}
\newtheorem{lemma}[theorem]{Lemma}
\newtheorem{prop}[theorem]{Proposition}
\newtheorem{cor}[theorem]{Corollary}
\newtheorem*{inote}{Remark}
\def\mapnew#1{\smash{\mathop{\longrightarrow}\limits^{#1}}}
\numberwithin{equation}{section}
\def\-{\overline}
\newcommand{\R}{\mathbb{R}}
\newcommand{\Z}{\mathbb{Z}}
\newcommand{\BZ}{\mathbb{Z}}
\theoremstyle{definition}
\begin{document}
\title[On the $\Sigma$-invariants of the Bestvina-Brady groups ]{
On the Bieri-Neumann-Strebel-Renz $\Sigma$-invariants of the Bestvina-Brady groups} 
\author{Dessislava H. Kochloukova, Luis Mendon\c{c}a}
\address{
\newline State University of Campinas (UNICAMP), SP, Brazil \newline
  Heinrich-Heine-Universit\"at D\"usseldorf (HHU), Germany
\newline
email : desi@unicamp.br, mendonca@hhu.de} 
\email{}
%\subjclass[2010]{Primary 20J05; Secondary 20F05;}
\date{}
\subjclass[2010]{%\color{red}
20J05}

\thanks{The first named author was   partially supported by CNPq grant 301779/2017-1  and by FAPESP grant 2018/23690-6.}
\keywords{homological finiteness properties, $\Sigma$-invariants, Bestvina-Brady groups}

\begin{abstract}  We study the Bieri-Neumann-Strebel-Renz invariants  and  we prove the following criterion: for groups $H$ and $K$ of type $FP_n$ such that $[H,H] \subseteq K \subseteq H$ and a character $\chi : K \to \R$ with $\chi([H,H]) = 0$ we have $[\chi] \in \Sigma^n(K, \Z)$ if and only if $[\mu] \in \Sigma^n(H, \Z)$ for every character $\mu : H \to \R$ that extends $\chi$. The same holds for the homotopical invariants $\Sigma^n(-)$ when $K$ and $H$ are groups of type $F_n$. We use these criteria to complete the description of the $\Sigma$-invariants of  the Bieri-Stallings groups $G_m$ and more generally to describe the $\Sigma$-invariants of the Bestvina-Brady groups. We also show that the ``only if'' direction of such criterion holds if we assume only that $K$ is a subnormal subgroup of $H$, where both groups are of type $FP_n$. We apply this last result to wreath products.
\end{abstract}

\maketitle

\section{Introduction} 

In \cite{Wall} Wall defined a group $G$ to be of  homotopical type $F_n$  if there is a classifying space $K(G,1)$ with finite $n$-skeleton. The homotopical type $F_2$ coincides with  finite presentability (in terms of generators and relations). A homological version of the homotopical type $F_n$ was defined by Bieri in \cite{Bieribook}:
a group $G$ is of homological type $FP_n$ if the trivial $\mathbb{Z} G$-module $\mathbb{Z}$ has a projective resolution with all modules finitely generated in dimension $\leq n$. Though every group of type $F_n$ is of type $FP_n$ the converse does not hold for $n \geq 2$. There are groups constructed by Bestvina and Brady in \cite{B-B} that are subgroups of right-angled Artin groups and are of type $FP_{\infty}$ but are not finitely presented.

The first $\Sigma$-invariant was defined by Bieri and Strebel in \cite{B-S}, where it was used to classify all finitely presented metabelian groups. In \cite{B-N-S} Bieri, Neumann and Strebel defined  the invariant  $\Sigma^1(G)$ for any finitely generated group $G$  and for 3-manifold groups they linked $\Sigma^1(G)$ with the Thurston norm. Higher dimensional homological invariants $\Sigma^n(G, A)$ for a $\Z G$-module $A$ were defined by Bieri and Renz in \cite{B-Renz}, where they showed  that   $\Sigma^n(G, \Z)$ controls which subgroups of $G$ that contain the commutator are of homological type $FP_n$. In \cite{Renzthesis} Renz defined  the higher dimensional homotopical invariant $\Sigma^n(G)$ for groups $G$ of homotopical type $F_n$ and similar to the homological case $\Sigma^n(G)$ controls the homotopical finiteness properties of the subgroups of $G$  above the commutator. In all cases the $\Sigma$-invariants are open subsets of the character sphere $S(G)$.
For a group $G$ of type $F_n$ we have $\Sigma^n(G) = \Sigma^n (G, \Z) \cap \Sigma^2(G)$.
The description of the $\Sigma$-invariants of right-angled Artin groups by Meier, Meinert and  Van Wyk shows  that  for $n \geq 2$ the inclusion $\Sigma^n(G) \subseteq \Sigma^n(G, \mathbb{Z})$ is not necessarily an equality \cite{Meinert-VanWyk}.

In general it is difficult to explicitly calculate the $\Sigma$-invariants of a group $G$ but for some special classes of groups and sometimes in particular small dimensions $n$ the $\Sigma^n(G)$  were studied: Thompson group $F$ \cite{B-G-K},  \cite{W-Z}, generalized Thompson groups $F_{n, \infty}$ \cite{Z},  metabelian groups \cite{B-Groves}, fundamental groups of compact K\"ahler manifolds \cite{D}, limit groups \cite{Desi1}, free-by-cyclic groups \cite{B-F},\cite{DK},  permutational wreath products \cite{Luis}, right-angled Artin groups (RAAGs) \cite{Meinert-VanWyk},  some Artin groups that are not RAAGs \cite{Kisnney}, \cite{Kisnney-K}.

 For a group $G$ we call a {\it character} a non-zero group homomorphism $G \to \mathbb{R}$.  The main result of this paper is the following theorem.

\begin{theorem}  \label{ThmI} Let $[H,H] \subseteq K \subseteq H$ be groups such that $H$ and $K$ are of type $FP_n$ (resp. $F_n$). Let $\chi : K \to \R$ be a character such that $\chi([H,H]) = 0$. Then $[\chi] \in \Sigma^n(K, \Z)$ (resp. $[\chi] \in \Sigma^n(K)$) if and only if $[\mu] \in \Sigma^n(H, \Z)$ (resp. $[\mu] \in \Sigma^n(H)$) for every character $\mu : H \to \R$ that extends $\chi$.
\end{theorem}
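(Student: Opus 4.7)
The plan is to use the Bieri--Renz reformulation $[\chi]\in\Sigma^n(G,\Z)\Longleftrightarrow \Z$ is of type $FP_n$ as a $\Z G_\chi$-module, equivalently (by Sikorav) $H_i(G;\widehat{\Z G}_{-\chi})=0$ for $0\le i\le n$. For the homotopical invariant $\Sigma^n(G)$ one uses Renz's cellular criterion on $(n-1)$-connected free $G$-CW complexes with $n$-skeleton cocompact over $G_\chi$.

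A first reduction is that since $\chi([H,H])=0$ and $[H,H]\subseteq K$, the commutator $[H,K]\subseteq[H,H]$ lies in $\ker\chi$, making $\chi$ invariant under the $H$-conjugation on $K$; consequently $K_\chi$ is normalized by $H$, and the extensions $\mu:H\to\R$ of $\chi$ form a nonempty affine subspace of $\mathrm{Hom}(H,\R)$. Fixing one extension $\mu_0$, every extension has the form $\mu=\mu_0+\eta\circ\pi$ with $\eta\in\mathrm{Hom}(Q,\R)$, where $\pi:H\twoheadrightarrow Q=H/K$ and $Q$ is finitely generated abelian. The theorem is thus a character-parametrized generalization of the Bieri--Renz $FP_n$-criterion (the case $\chi=0$: ``$K$ is of type $FP_n$ iff the entire subsphere $S(H,K)$ lies in $\Sigma^n(H,\Z)$'').

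Both directions should be accessible through the Lyndon--Hochschild--Serre spectral sequence of $1\to K\to H\to Q\to 1$ with coefficients in the Novikov module $\widehat{\Z H}_{-\mu}$,
\[
E_2^{p,q}=H_p\bigl(Q;\,H_q(K;\widehat{\Z H}_{-\mu})\bigr)\Longrightarrow H_{p+q}\bigl(H;\widehat{\Z H}_{-\mu}\bigr),
\]
after identifying the $\Z K$-module $\widehat{\Z H}_{-\mu}|_K$ as being built out of $\widehat{\Z K}_{-\chi}$ via a choice of coset representatives $\{t_q\}_{q\in Q}$ with controlled $\mu$-values (possible because $\chi(K)$ is a nontrivial subgroup of $\R$, so each coset of $K$ in $H$ meets $H_\mu$). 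For $(\Rightarrow)$ the hypothesis $H_q(K;\widehat{\Z K}_{-\chi})=0$ for $q\le n$ kills the bottom rows of $E_2$, giving $[\mu]\in\Sigma^n(H,\Z)$. For $(\Leftarrow)$ the vanishing of the abutment for every $\eta\in\mathrm{Hom}(Q,\R)$, combined with the Bieri--Renz theorem applied to the f.g.\ abelian group $Q$ (whose $\Sigma$-invariants are the whole sphere), back-solves for $H_q(K;\widehat{\Z K}_{-\chi})=0$ for $q\le n$.

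The main obstacle is the identification of $\widehat{\Z H}_{-\mu}|_K$ in terms of $\widehat{\Z K}_{-\chi}$ and the correct manipulation of the spectral sequence---especially in $(\Leftarrow)$, where information from the entire affine family of extensions must be pooled into a single assertion about $K$. The homotopical statement follows by the parallel argument using Renz's equivariant cellular criterion in place of the chain-level one, or alternatively by combining the just-established homological result with the identity $\Sigma^n(G)=\Sigma^n(G,\Z)\cap\Sigma^2(G)$ recalled in the introduction together with a separate direct treatment at $n=2$.
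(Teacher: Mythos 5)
Your forward direction ($[\chi]\in\Sigma^n(K,\Z)\Rightarrow[\mu]\in\Sigma^n(H,\Z)$ for every extension $\mu$) is essentially the paper's argument: the paper proves a more general subnormal statement (Theorem \ref{subnormal}) by exactly the LHS spectral sequence with Novikov coefficients. One point you leave implicit but must make precise: to kill the rows $q\le n$ of $E_2^{p,q}=H_p(Q,H_q(K,\widehat{\Z H}_{\mu}))$ you need the vanishing of $H_q(K,-)$ on the module $\widehat{\Z H}_{\mu}$, not merely on $\widehat{\Z K}_{\chi}$; the restriction $\widehat{\Z H}_{\mu}|_K$ is a \emph{completed} sum over cosets, not a direct sum of copies of $\widehat{\Z K}_{\chi}$, so the ``choice of coset representatives'' identification does not directly give this. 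The clean fix is Sch\"utz's result (Lemma \ref{Schu}): $[\chi]\in\Sigma^n(K,\Z)$ forces $H_q(K,M)=0$ for \emph{every} $\widehat{\Z K}_{\chi}$-module $M$, in particular for $M=\widehat{\Z H}_{\mu}$. With that citation this half is fine. (For the homotopical forward direction at $n=2$ the spectral sequence is unavailable; the paper reduces to $H=K\rtimes\Z$ and invokes Meinert's HNN criterion, which you would also need to supply.)

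The converse is where there is a genuine gap. ``Back-solving'' the LHS spectral sequence from the vanishing of the abutments $H_{p+q}(H,\widehat{\Z H}_{\mu})$, $\mu$ ranging over the affine family of extensions, is not an argument, and I do not believe it can be made into one. Concretely: even in the first nontrivial case, the spectral sequence only yields $E_2^{0,0}=H_0\bigl(Q,H_0(K,\widehat{\Z H}_{\mu})\bigr)=0$, and vanishing of $Q$-coinvariants of a module that is not finitely generated over any Noetherian ring does not force the module itself to vanish (there is no Nakayama-type lemma here); in higher degrees one additionally has to rule out cancellation against other $E_2$-terms via differentials. On top of that, the terms on the $E_2$-page involve $H_q(K,\widehat{\Z H}_{\mu})$, whereas the conclusion you need concerns $H_q(K,\widehat{\Z K}_{\chi})$, and there is no exactness property letting you pass from the former to the latter. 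This is precisely why the ``if'' direction of the Bieri--Renz Theorem B is proved geometrically rather than homologically, and the paper follows that route: it reduces to finite-index subgroups $H_0\supseteq K_0$ with torsion-free quotients, applies the compactness criterion (Theorem \ref{BR-compact}) to the compact set $\Gamma=\{[\mu_0]\mid \mu_0|_{K_0}=\chi_0\}\cup S(H_0,K_0)$ (note that $S(H_0,K_0)$, which is \emph{not} in your family of extensions, is needed and is supplied by Theorem \ref{BRhomotopic} since $K$ is $FP_n$), and then runs a quantitative ``pushing'' argument with valuations and supports to show that the subcomplexes of the resolution supported over the tubular neighborhoods $D_r$ of the half-space $W_{\chi_0\ge 0}$ are exact up to dimension $n-1$, concluding via the Bieri--Renz filtration criterion. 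None of this machinery is present in, or replaceable by, your spectral-sequence plan, so the hard direction remains unproved in your proposal; the same applies to the homotopical case at $n=2$, which the paper handles with Renz's $\Sigma^1$/$\Sigma^2$ criteria and van Kampen diagram surgery.
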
 

Theorem \ref{ThmI} can be seen as a monoidal version of the Bieri-Renz result \cite[Thm.~B]{B-Renz} and its homotopical counterpart \cite[Satz~C]{Renzthesis}.  Furthermore our proof of Theorem \ref{ThmI}  is very much inspired by the proofs of \cite[Thm.~B]{B-Renz} and  \cite[Satz~C]{Renzthesis}. As an application of Theorem \ref{ThmI} we calculate $\Sigma^n(G, \mathbb{Z})$ and $\Sigma^n(G)$ for the Bestvina-Brady groups, in particular this applies to  the Bieri-Stallings groups.

The Bieri-Stallings groups $G_m$ were studied in \cite[Section~2]{Bieribook}, where Bieri proved that $G_m$ is of type $FP_{m-1}$ but not $FP_m$. The case $m = 3$ was previously considered by Stallings in \cite{S} and it was the first known example of a group that is finitely presented but not of type $FP_3$.  
 In \cite{KL2} Kochloukova and Lima  suggested  the Monoidal Virtual Surjection  Conjecture and they showed that it holds for $G_m$ in dimension $n$ provided $n \leq m-2$ and $m \geq 3$, i.e. they calculated $\Sigma^n(G_m, \Z)$ and $\Sigma^n(G_m)$. We will discuss in details the Monoidal Virtual Surjection  Conjecture in the preliminary section \ref{prel}.  As an application of Theorem \ref{ThmI} we complete in  Corollary  \ref{ThmJ} the calculation of $\Sigma^n(G_m, \Z)$  and $\Sigma^n(G_m)$ for any $n$.

  \begin{cor} \label{ThmJ} The Monoidal Virtual Surjection  Conjecture holds for the Bieri-Stallings groups $S = G_m$ where $m \geq 2$ and the  classical embedding $S = G_m \subseteq L_1 \times \ldots \times L_m$ with $L_i$ free of rank 2 for $ 1 \leq i \leq m$.
 \end{cor}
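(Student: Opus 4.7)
The plan is to derive Corollary \ref{ThmJ} by applying Theorem \ref{ThmI} to the pair $H = L := L_1 \times \ldots \times L_m$ and $K = G_m$. By definition $G_m$ is the kernel of the homomorphism $\epsilon : L \to \Z$ sending each of the $2m$ free generators to $1$, so $\epsilon$ kills commutators and therefore $[L,L] \subseteq G_m \subseteq L$, which is exactly the nesting required by Theorem \ref{ThmI}. The ambient group $L$ is a product of free groups, hence of type $F_\infty$, and $G_m$ is the Bestvina-Brady group of the $(m-1)$-sphere (realised as the flag complex of the commutation graph of $L$, which is the $m$-fold join of $S^0$); since $S^{m-1}$ is $(m-2)$-connected and $(m-2)$-acyclic, $G_m$ is of type $F_{m-1}$ and $FP_{m-1}$. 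Hence the hypotheses of Theorem \ref{ThmI} are met for every $n \leq m-1$.

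By Kochloukova-Lima \cite{KL2} the Monoidal Virtual Surjection Conjecture is already known for $G_m$ in each dimension $n \leq m-2$ (with $m \geq 3$), so the only case left open is the top dimension $n = m-1$, together with the case $m = 2$ (where only $n = 1$ is relevant and $G_2$ is merely finitely generated). For these remaining dimensions, Theorem \ref{ThmI} applied to $(H,K)=(L,G_m)$ yields directly: for every character $\chi : G_m \to \R$ with $\chi([L,L]) = 0$,
\[
[\chi] \in \Sigma^{m-1}(G_m, \Z) \Longleftrightarrow [\mu] \in \Sigma^{m-1}(L, \Z) \text{ for every character } \mu : L \to \R \text{ extending } \chi,
\]
and the analogous equivalence for the homotopical invariant $\Sigma^{m-1}(-)$ whenever $m \geq 3$. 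Because $[L,L] \subseteq G_m$ and $\R$ is divisible, the characters of $G_m$ that extend to $L$ are exactly those vanishing on $[L,L]$, so this equivalence is precisely the content of the Monoidal Virtual Surjection Conjecture in the missing dimension for the classical embedding.

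All of the technical work is packaged inside Theorem \ref{ThmI}; in particular it is the ``if'' direction at the critical dimension $n = m-1$ that allowed us to surpass the range $n \leq m-2$ of \cite{KL2}. The main obstacle I foresee is purely terminological: one must verify that the formulation of the conjecture in \cite{KL2} indeed restricts to characters $\chi$ of $G_m$ with $\chi([L,L]) = 0$ (equivalently, those admitting an extension to $L$), and interprets its conclusion in terms of $\Sigma^{m-1}(L, \Z)$ and $\Sigma^{m-1}(L)$ via the Meier-Meinert-Van Wyk description of the RAAG $L$ \cite{Meinert-VanWyk}. Once this bookkeeping is settled, Corollary \ref{ThmJ} is immediate from Theorem \ref{ThmI}.
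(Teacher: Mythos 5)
Your overall route is the paper's: reduce to the top dimension $n=m-1$ via the cases already settled in \cite{KL2}, and then feed the pair $(H,K)=(L_1\times\cdots\times L_m,\,G_m)$ into Theorem \ref{ThmI}. But there are two places where you assert rather than prove, and the second one is where the actual content of the corollary lives.

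First, to apply Theorem \ref{ThmI} to an \emph{arbitrary} character $\chi$ of $G_m$ you need $\chi([L,L])=0$, which does not follow from the containment $[L,L]\subseteq G_m$ alone; you need $[L,L]=[G_m,G_m]$ (this is Lemma \ref{june14} in the paper, and it is invoked explicitly in the proof of Corollary \ref{ThmJ}). Second, and more seriously, the equivalence produced by Theorem \ref{ThmI} -- ``$[\chi]\in\Sigma^{m-1}(G_m)$ iff every extension $\mu$ lies in $\Sigma^{m-1}(L)$'' -- is \emph{not} ``precisely the content'' of the Monoidal Virtual Surjection Conjecture: the conjecture's other side is the monoid condition $p_{j_1,\dots,j_{m-1}}(S_\chi)=p_{j_1,\dots,j_{m-1}}(S)$ for all $(m-1)$-tuples. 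Bridging the two requires (a) the description of $\Sigma^{m-1}$ of a product of free groups (Theorem \ref{new111}): $[\mu]\in\Sigma^{m-1}(L)$ iff $\mu|_{L_i}\neq 0$ for every $i$; and (b) the computation $\mathrm{Ker}(p_{j_1,\dots,j_{m-1}})=G_m\cap L_i$ (with $i$ the omitted index) together with the fact from \cite{KL2} that $p_{j_1,\dots,j_{m-1}}(S_\chi)=p_{j_1,\dots,j_{m-1}}(S)$ is equivalent to $\chi(\mathrm{Ker}(p_{j_1,\dots,j_{m-1}}))\neq 0$, so that the projection hypothesis forces $\chi(G_m\cap L_i)\neq 0$ and hence $\mu|_{L_i}\neq 0$ for every extension $\mu$. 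You flag this as ``purely terminological bookkeeping,'' but it is a genuine (if short) argument, and it is exactly what the paper supplies; one should also record, as the paper does, that the forward implication of the conjecture is already Theorem \ref{mono}(1) and that $\Sigma^{m-1}(S)\subseteq\Sigma^{m-1}(S,\Z)\subseteq\Sigma^{m-1}(S,\Q)$, so only the backward implication landing in $\Sigma^{m-1}(S)$ needs to be proved.
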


 The Bestvina-Brady groups $BB_{\Gamma}$ are defined as the kernels of homomorphisms $\varphi: A_{\Gamma} \to \BZ$ that take the usual generators to $1 \in \BZ$, where $A_{\Gamma}$ is the right-angled Artin group defined by the finite graph $\Gamma$. This is exactly the class of groups studied in \cite{B-B}, where it is shown that $BB_{\Gamma}$ can have various sets of exotic finiteness properties depending on the topological properties of the flag complex $\Delta$ associated to $\Gamma$, in particular $BB_{\Gamma}$ is $FP_n$ (resp. $BB_{\Gamma}$ is $F_n$) if and only if $\Delta$ is $(n-1)$-acyclic (resp. $\Delta$ is $(n-1)$-connected).
Theorem \ref{ThmI} implies that the $\Sigma$-invariants of $BB_{\Gamma}$ are determined by those of $A_{\Gamma}$ and this completes the description of  the $\Sigma$-invariants of $BB_{\Gamma}$ since the $\Sigma$-invariants of  $A_{\Gamma}$ are completely described in \cite{Meinert-VanWyk, MV}.  Note that the Bieri-Stallings group $G_m = BB_{\Gamma} $  and $L_1 \times \ldots \times L_m = A_{\Gamma}$ for the graph $\Gamma$ with vertices $x_1, y_1, \ldots, x_m, y_{m}$, where the only vertices  that are not connected by an edge in $\Gamma$ are $x_i$ and $y_i$ for $ 1 \leq i \leq m$. Here $L_i$ is a free non-abelian group generated by  $x_i$ and $y_i$.

\begin{cor} \label{cor-june} Let $\Gamma$ be a finite connected graph such that $\Delta$ is $(n-1)$-acyclic (resp. $\Delta$ is $(n-1)$-connected). Then $[\chi] \in \Sigma^n(BB_{\Gamma}, \mathbb{Z})$ (resp. $[\chi] \in \Sigma^n(BB_{\Gamma})$) if and only if $[\mu] \in \Sigma^n(A_{\Gamma}, \mathbb{Z})$ (resp. $[\mu] \in \Sigma^n(A_{\Gamma})$) for every character $\mu : A_{\Gamma} \to \mathbb{R}$ that extends $\chi$.	
\end{cor}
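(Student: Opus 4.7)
The plan is to obtain Corollary \ref{cor-june} as an immediate application of Theorem \ref{ThmI} with $H = A_{\Gamma}$ and $K = BB_{\Gamma}$. The steps are: verify the algebraic inclusion $[A_{\Gamma}, A_{\Gamma}] \subseteq BB_{\Gamma} \subseteq A_{\Gamma}$; verify the finiteness hypotheses on both groups; apply Theorem \ref{ThmI}; and reconcile the vanishing condition $\chi([A_{\Gamma}, A_{\Gamma}]) = 0$ of that theorem with the ``$\mu$ extends $\chi$'' formulation in the corollary.

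The inclusion of the commutator follows directly from the construction of $BB_{\Gamma}$: it is defined as the kernel of a homomorphism $\varphi : A_{\Gamma} \to \mathbb{Z}$ onto an abelian group, so $\varphi$ factors through $A_{\Gamma}^{ab}$ and hence annihilates every commutator, giving $[A_{\Gamma}, A_{\Gamma}] \subseteq \ker\varphi = BB_{\Gamma}$. For the finiteness of the ambient group, $A_{\Gamma}$ acts freely, properly and cocompactly on its Salvetti complex, so it is of type $F_{\infty}$, in particular of type $F_n$ and $FP_n$ for every $n$. The decisive input is the main theorem of Bestvina-Brady \cite{B-B}: under the assumption that $\Delta$ is $(n-1)$-acyclic (resp. $(n-1)$-connected), $BB_{\Gamma}$ is of type $FP_n$ (resp. $F_n$), which is exactly the matching finiteness hypothesis that Theorem \ref{ThmI} asks for.

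Finally I would note that the hypothesis $\chi([A_{\Gamma}, A_{\Gamma}]) = 0$ in Theorem \ref{ThmI} is equivalent to the existence of some extension $\mu : A_{\Gamma} \to \mathbb{R}$ of $\chi$: since $\mathbb{R}$ is abelian, every character of $A_{\Gamma}$ factors through $A_{\Gamma}^{ab}$, so $\chi$ extends if and only if it factors through $BB_{\Gamma} / (BB_{\Gamma} \cap [A_{\Gamma}, A_{\Gamma}]) = BB_{\Gamma} / [A_{\Gamma}, A_{\Gamma}]$, which is precisely the vanishing condition. Thus Theorem \ref{ThmI} yields the desired equivalence for every $\chi$ for which some extension exists, which is the intended content of Corollary \ref{cor-june}. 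I do not anticipate any real obstacle, since the corollary is essentially a repackaging of Theorem \ref{ThmI} with the Bestvina-Brady finiteness theorem; the only minor point to keep straight is the parallel $F_n$ versus $FP_n$ bookkeeping, which is built into the statement via the $(n-1)$-connected versus $(n-1)$-acyclic distinction on $\Delta$.
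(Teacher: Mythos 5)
Your reduction to Theorem \ref{ThmI} is the right idea and matches the paper's strategy, and your verifications of $[A_{\Gamma},A_{\Gamma}]\subseteq BB_{\Gamma}$ and of the finiteness hypotheses (via Bestvina--Brady and the Salvetti complex) are fine. But there is a genuine gap at the last step. You correctly observe that $\chi$ admits an extension to $A_{\Gamma}$ if and only if $\chi([A_{\Gamma},A_{\Gamma}])=0$, and you then conclude that Theorem \ref{ThmI} gives the equivalence ``for every $\chi$ for which some extension exists.'' That is a strictly weaker statement than the corollary, which asserts the biconditional for \emph{every} character $\chi$ of $BB_{\Gamma}$. If some character $\chi$ of $BB_{\Gamma}$ did not vanish on $[A_{\Gamma},A_{\Gamma}]$, the right-hand side of the corollary would be vacuously true (there are no extensions $\mu$) while the left-hand side could fail, so the corollary would be false for that $\chi$. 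Hence the proof must show that \emph{no} such $\chi$ exists, i.e.\ that every character of $BB_{\Gamma}$ kills $[A_{\Gamma},A_{\Gamma}]$.

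This is exactly the content of the paper's Lemma \ref{june14}: for $\Gamma$ connected one has $[A_{\Gamma},A_{\Gamma}]=[BB_{\Gamma},BB_{\Gamma}]$, so every character of $BB_{\Gamma}$ (which automatically kills $[BB_{\Gamma},BB_{\Gamma}]$) kills $[A_{\Gamma},A_{\Gamma}]$ and therefore extends. The lemma is not formal: it is proved by induction on the distance between two vertices $x,y$ in $\Gamma$, using an explicit identity such as $[x,y]=[xy^{-1},t^{-1}y]^{y}\,[x,t]^{t^{-1}y}$ for $t$ adjacent to $y$, and it is the place where the connectivity hypothesis on $\Gamma$ is actually used (your argument never invokes connectivity for this purpose). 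Supplying this lemma, or an equivalent argument that the image of $[A_{\Gamma},A_{\Gamma}]$ in $\operatorname{Hom}(BB_{\Gamma},\R)^{*}$ vanishes, closes the gap; without it your proof only covers the extendable characters.
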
 

 In \cite{B-Groves} using valuation theory Bieri and Groves showed that for a finitely generated metabelian group $G$ the complement $\Sigma^1(G)^c$ of $\Sigma^1(G)$ in the character sphere $S(G)$ is a rationally defined spherical polyhedron i.e. a finite union of finite intersections of rationally defined closed semispheres in $S(G)$, where the rationality  of the hemisphere means that it is defined by a rational vector. There are examples of groups $G$ of PL automorphism of the unit interval, where $\Sigma^1(G)^c$ contains precisely two points that need not be discrete \cite{B-N-S}, hence $\Sigma^1(G)^c$ is not a rationally defined polyhedron. We show that this cannot happen for the Bestvina-Brady group $BB_{\Gamma}$.

\begin{cor} \label{polyhedron} Let $\Gamma$ be a finite connected graph. Then $\Sigma^1( BB_{\Gamma}) ^c = S(BB_{\Gamma}) \setminus \Sigma^1(BB_{\Gamma})$ is a rationally defined spherical polyhedron. In particular for the  Bieri-Stallings groups $ G_m$, where $m \geq 2$, we have that $\Sigma^1(G_m)^c$ is a rationally defined spherical polyhedron.
\end{cor}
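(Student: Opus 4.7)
The plan is to derive Corollary~\ref{polyhedron} from Corollary~\ref{cor-june} in dimension $n = 1$, combined with the theorem of Meier-Meinert-Van Wyk that $\Sigma^1(A_\Gamma)^c$ is already a rationally defined spherical polyhedron in $S(A_\Gamma)$. The point is that the restriction map from the characters of $A_\Gamma$ to those of $BB_\Gamma$ will be a $\mathbb{Q}$-linear surjection, and the image of a rational polyhedral cone under such a map is again rational polyhedral.

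First I would verify that every character of $BB_\Gamma$ extends to a character of $A_\Gamma$, equivalently that $H_1(BB_\Gamma;\mathbb{R}) \cong \mathbb{R}^{|V(\Gamma)|-1}$. Pick a spanning tree $T$ of $\Gamma$; the telescope identity $(v_0 v_1^{-1})(v_1 v_2^{-1})\cdots(v_{k-1} v_k^{-1}) = v_0 v_k^{-1}$, which requires no commutation, shows that every edge element $v_i v_j^{-1}$ of $\Gamma$, and hence every element of $BB_\Gamma$ by a standard argument of Bestvina-Brady, lies in the subgroup generated by the $|V|-1$ tree edge elements. So $H_1(BB_\Gamma)$ is generated by at most $|V|-1$ elements. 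On the other hand, the inclusion $BB_\Gamma \hookrightarrow A_\Gamma$ induces a homomorphism $H_1(BB_\Gamma) \to H_1(A_\Gamma)$ whose image is $\ker(\varphi^{ab}) \cong \mathbb{Z}^{|V|-1}$, forcing $H_1(BB_\Gamma) \cong \mathbb{Z}^{|V|-1}$ since a group generated by $k$ elements and surjecting onto $\mathbb{Z}^k$ must be free of rank $k$.

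Next I would set up the cone picture. Let $\rho : \operatorname{Hom}(A_\Gamma, \mathbb{R}) \to \operatorname{Hom}(BB_\Gamma, \mathbb{R})$ be the restriction; by the previous step it is a surjective $\mathbb{Q}$-linear map with kernel $\mathbb{R}\cdot\varphi$. For a subset $X$ of a character sphere, let $C(X)$ denote the corresponding cone in the character vector space. Applying Corollary~\ref{cor-june} with $n=1$, where the hypothesis $\chi([A_\Gamma,A_\Gamma])=0$ is automatic by the surjectivity of $\rho$, a nonzero $\chi \in \operatorname{Hom}(BB_\Gamma, \mathbb{R})$ represents a class in $\Sigma^1(BB_\Gamma)^c$ if and only if some $\mu \in \rho^{-1}(\chi)$ represents a class in $\Sigma^1(A_\Gamma)^c$; equivalently,
\[ C(\Sigma^1(BB_\Gamma)^c) = \rho\bigl( C(\Sigma^1(A_\Gamma)^c) \bigr). \]

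To finish, $C(\Sigma^1(A_\Gamma)^c)$ is a rationally defined polyhedral cone by Meier-Meinert-Van Wyk, and the image of such a cone under a $\mathbb{Q}$-linear map is again a rationally defined polyhedral cone by Fourier-Motzkin elimination. Passing back to the sphere gives the desired rationally defined spherical polyhedron $\Sigma^1(BB_\Gamma)^c$, and the statement for the Bieri-Stallings groups $G_m$ is the specialization to the graph $\Gamma$ recalled just before Corollary~\ref{cor-june}. The hard part is the abelianization computation in the first step (verifying that every character of $BB_\Gamma$ extends to $A_\Gamma$); once that is in place, the rest is standard polyhedral and linear algebra.
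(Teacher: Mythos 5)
Your argument is correct and follows essentially the same route as the paper: both deduce from the Meier--Meinert--VanWyk description that $\Sigma^1(A_{\Gamma})^c$ is a rationally defined spherical polyhedron and then transfer this to $BB_{\Gamma}$ via Theorem \ref{ThmI} (Corollary \ref{cor-june}) in dimension one, the transfer being exactly the rational linear projection along $\ker\rho=\mathbb{R}\varphi$ that you make explicit with Fourier--Motzkin and that the paper leaves implicit. The only real difference is how you justify that every character of $BB_{\Gamma}$ kills $[A_{\Gamma},A_{\Gamma}]$ and extends to $A_{\Gamma}$: you compute $H_1(BB_{\Gamma})\cong\mathbb{Z}^{|V(\Gamma)|-1}$ from the edge generators, whereas the paper instead proves $[A_{\Gamma},A_{\Gamma}]=[BB_{\Gamma},BB_{\Gamma}]$ for connected $\Gamma$ (Lemma \ref{june14}).
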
 

We also strengthen  one of the directions of Theorem \ref{ThmI}.

\begin{theorem} \label{subnormal}
 Let $N \leq G$ be groups of type $FP_n$ and let $\chi: G \to \R$ be a character. Suppose that $N$ is a subnormal subgroup of $G$. If $[\chi|_N] \in \Sigma^n(N,\Z)$, then $[\chi] \in \Sigma^n(G,\Z)$.
\end{theorem}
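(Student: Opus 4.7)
The plan is to establish Theorem \ref{subnormal} by induction on the length $k$ of a subnormal chain $N = N_0 \triangleleft N_1 \triangleleft \cdots \triangleleft N_k = G$, using Sikorav's characterization of the $\Sigma$-invariants together with the Lyndon--Hochschild--Serre spectral sequence. Recall that Sikorav's theorem asserts, for a group $H$ of type $FP_n$ and a character $\psi : H \to \R$, the equivalence
\[
[\psi] \in \Sigma^n(H, \Z) \iff H_i(H, \widehat{\Z H}_\psi) = 0 \text{ for } 0 \leq i \leq n,
\]
where $\widehat{\Z H}_\psi$ denotes the Novikov completion of $\Z H$ with respect to $\psi$. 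My aim is to prove, by induction on $j$, the vanishing $H_q(N_j, \widehat{\Z G}_\chi) = 0$ for all $0 \leq q \leq n$ and all $0 \leq j \leq k$; the case $j = k$ then translates, via Sikorav applied to $G$, to $[\chi] \in \Sigma^n(G, \Z)$.

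For the inductive step $j \to j+1$, I apply the LHS spectral sequence to the extension $1 \to N_j \to N_{j+1} \to N_{j+1}/N_j \to 1$ with coefficients in the $G$-module $\widehat{\Z G}_\chi$:
\[
E^2_{p,q} = H_p(N_{j+1}/N_j, H_q(N_j, \widehat{\Z G}_\chi)) \Longrightarrow H_{p+q}(N_{j+1}, \widehat{\Z G}_\chi).
\]
The inductive hypothesis makes $E^2_{p,q}$ vanish for all $q \leq n$, forcing $H_i(N_{j+1}, \widehat{\Z G}_\chi) = 0$ in this range. Crucially, the LHS spectral sequence is formal and does not require the intermediate subnormal groups $N_j$ to be of type $FP_n$---only the endpoints $N$ and $G$ are assumed to have this property in the hypothesis of the theorem.

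The heart of the proof is the base case $j = 0$: one must show $H_q(N, \widehat{\Z G}_\chi) = 0$ for $0 \leq q \leq n$. From the hypothesis $[\chi|_N] \in \Sigma^n(N, \Z)$, Sikorav applied to $N$ gives $H_q(N, \widehat{\Z N}_{\chi|_N}) = 0$ in this range, and the task is to transfer this vanishing from the $\Z N$-module $\widehat{\Z N}_{\chi|_N}$ to the larger $\Z N$-module $\widehat{\Z G}_\chi$. Fixing a right transversal $\{g_\ell\}_\ell$ of $N$ in $G$, every element of $\widehat{\Z G}_\chi$ admits a unique decomposition $\sum_\ell y_\ell g_\ell$ with $y_\ell \in \widehat{\Z N}_{\chi|_N}$ subject to a global Novikov-type finiteness condition; this exhibits $\widehat{\Z G}_\chi$ as a $\Z N$-submodule of the direct product $\prod_\ell \widehat{\Z N}_{\chi|_N}$.

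The hard part will be computing $H_q(N, \widehat{\Z G}_\chi)$ using this structure, because $\widehat{\Z G}_\chi$ is in general a strictly proper ``restricted'' product rather than the full direct product $\prod_\ell \widehat{\Z N}_{\chi|_N}$, so the vanishing on the latter does not transfer formally. My plan is to compute $H_q(P_\bullet \otimes_{\Z N} \widehat{\Z G}_\chi)$ using a finitely generated projective resolution $P_\bullet \to \Z$ over $\Z N$ (available since $N$ is $FP_n$): because each $P_i$ with $i \leq n$ is finitely generated, a cycle in $P_i \otimes_{\Z N} \widehat{\Z G}_\chi$ only involves finitely many basis elements, so one can track its support through a bounded number of coset coordinates and invoke the known vanishing over $\widehat{\Z N}_{\chi|_N}$ on each coset separately. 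This step requires delicate bookkeeping around the Novikov support condition and is the most technical part of the argument.
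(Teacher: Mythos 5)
Your reduction to the base case is exactly the paper's argument: both proofs run Sikorav's characterization, induct along a subnormal chain $N=N_0\trianglelefteq N_1\trianglelefteq\cdots\trianglelefteq N_r=G$, and kill $H_j(N_{i+1},\widehat{\Z G}_{\chi})$ for $j\le n$ via the LHS spectral sequence, correctly observing that this step is formal and needs no finiteness hypotheses on the intermediate subgroups. The divergence, and the problem, is in the base case $H_q(N,\widehat{\Z G}_{\chi})=0$.

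The paper disposes of the base case in one line by quoting Sch\"utz's result (Lemma \ref{Schu}, i.e.\ \cite[Corollary~2.5]{Sc}): if $[\chi|_N]\in\Sigma^n(N,\Z)$ then $H_k(N,M)=0$ for $k\le n$ and for \emph{every} $\widehat{\Z N}_{\chi|_N}$-module $M$; one applies this to $M=\widehat{\Z G}_{\chi}$, which is a module over the subring $\widehat{\Z N}_{\chi|_N}\subseteq\widehat{\Z G}_{\chi}$. Your plan instead tries to reprove this special case by hand, and as written it has a genuine gap. Writing $\widehat{\Z G}_{\chi}=\sum_\ell \widehat{\Z N}_{\chi|_N}\,g_\ell$ over a right transversal is fine, but your assertion that a cycle in $P_i\otimes_{\Z N}\widehat{\Z G}_{\chi}$ has support in ``a bounded number of coset coordinates'' is false: a cycle is a finite sum $\sum_j e_j\otimes\lambda_j$ over basis elements of $P_i$, but each single Novikov coefficient $\lambda_j$ can spread over infinitely many cosets $Ng_\ell$. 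So you cannot reduce to finitely many cosets. You can project the cycle onto each coset and bound each component $z_\ell$ there using $H_q(N,\widehat{\Z N}_{\chi|_N})=0$, but then you must sum the resulting primitives $w_\ell$ over infinitely many $\ell$, and for $\sum_\ell w_\ell$ to land in $P_{i+1}\otimes_{\Z N}\widehat{\Z G}_{\chi}$ you need a \emph{uniform} lower bound $v_{\chi}(w_\ell)\ge v_{\chi}(z_\ell)-C$ with $C$ independent of $\ell$. That uniformity is exactly the content of Sch\"utz's lemma (equivalently, of the existence of a partial contracting homotopy for the finitely generated free complex $\widehat{\Z N}_{\chi|_N}\otimes_{\Z N}P_{\bullet}$ in degrees $\le n$), and it is the one idea missing from your sketch. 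Supplying it amounts to proving \cite[Corollary~2.5]{Sc}; citing it, as the paper does, closes the gap immediately.
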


This generalizes \cite[Proposition~C2.25]{Strebel}, where Strebel considered the case $n=1$. Notice that we do not require that the subgroups involved in some subnormal series for $N$ are of type $FP_n$. Such flexibility allows us, for instance, to apply the result to wreath products. 
Recall that for a group $G$ acting on a set $X$ and a group $H$ the (permutational restricted) wreath product $\Gamma = H \wr_X G$ is $(\oplus_{x \in X} H_x ) \rtimes G$, where each $H_x \simeq H$ and the $G$-action (via conjugation) on $\oplus_{x \in X} H_x $ permutes the copies of $H_x$ via the original $G$-action on $X$.

\begin{corollary}  \label{wreath}
 Let $\Gamma = H \wr_X G$ be a (permutational restricted) wreath product of type $FP_n$ and let $\chi: \Gamma \to \R$ be a character. Let 
 \[ T_{\chi} = \{ x \in X \mid \chi |_{H_x} \neq 0\}.\]
 If $T_{\chi}$ has at least $n+1$ elements, then $[\chi] \in \Sigma^n(\Gamma,\Z)$.
 \end{corollary}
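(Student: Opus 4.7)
The plan is to reduce Corollary~\ref{wreath} to Theorem~\ref{subnormal} by exhibiting a subnormal subgroup $N\le\Gamma$ of type $FP_n$ on which the restriction of $\chi$ manifestly lies in $\Sigma^n(N,\Z)$. Concretely, I would pick $n+1$ distinct indices $x_0,\dots,x_n\in T_{\chi}$ and set
\[
N = H_{x_0}\times H_{x_1}\times\cdots\times H_{x_n},
\]
viewed as a direct factor of the base group $B=\bigoplus_{x\in X}H_x$. Since $B\trianglelefteq\Gamma$ and $N\trianglelefteq B$, the subgroup $N$ is subnormal in $\Gamma$ via the chain $N\trianglelefteq B\trianglelefteq\Gamma$.

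Next I would verify the two hypotheses needed in order to apply Theorem~\ref{subnormal}. First, that $N$ is of type $FP_n$: from $\Gamma=H\wr_X G$ being of type $FP_n$ one deduces that $H$ itself must be of type $FP_n$ (a standard byproduct of the homological analysis of permutational wreath products, already recorded in \cite{Luis}), hence each factor $H_{x_i}\cong H$ is of type $FP_n$ and so is their finite direct product $N$. Second, that $[\chi|_N]\in\Sigma^n(N,\Z)$: by definition of $T_{\chi}$ the character $\chi$ restricts to a non-zero character on every $H_{x_i}$, and since every non-zero character on a finitely generated group represents a point of $\Sigma^0(-,\Z)=S(-)$, iterating the classical Bieri--Geoghegan/Meinert product formula for $\Sigma^n$ of direct products over the $n+1$ factors yields $[\chi|_N]\in\Sigma^n(N,\Z)$.

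With these two facts in hand, Theorem~\ref{subnormal} applied to the subnormal inclusion $N\le\Gamma$ gives $[\chi]\in\Sigma^n(\Gamma,\Z)$, completing the argument. The main potential obstacle is the first step --- namely, checking that each $H_{x_i}$ is of type $FP_n$, which is needed in order to even make sense of $\Sigma^n(N,\Z)$. Once this (known) inheritance property for wreath products is invoked, the remaining ingredients --- the subnormality of $N$ and the product formula for $\Sigma^n$ of direct products --- are routine.
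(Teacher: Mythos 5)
Your proposal is correct and follows essentially the same route as the paper: choose $n+1$ indices in $T_{\chi}$, form $N=\bigoplus H_{x_i}$, note $N$ is of type $FP_n$ (the paper cites \cite[Theorem~A]{BCK} for $H$ being $FP_n$), get $[\chi|_N]\in\Sigma^n(N,\Z)$ from the direct product formula, and conclude via the subnormal chain $N\trianglelefteq\bigoplus_{x\in X}H_x\trianglelefteq\Gamma$ and Theorem \ref{subnormal}. The only caveat is that the Bieri--Geoghegan equality is for field coefficients and fails for $\Z$ in high dimensions, so one should invoke the inclusion ``$\subseteq$'' of Theorem \ref{teo-conjec.prod.dir.corpo2} (valid for all $n$ over $\Z$), which is exactly what the paper does.
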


Corollary \ref{wreath} generalizes previous work by Mendon\c{c}a in \cite{Luis} on the low dimensional (homotopical) invariants of wreath products. This also complements \cite[Theorem~8.1]{BCK}, where the set $T_{\chi}$ is assumed to be empty.  

This article is organized as follows: Sections \ref{prelimsigma} and \ref{prel} contain preliminaries about $\Sigma$-invariants and the special classes of groups that we consider. We prove Theorem \ref{subnormal} and Corollary \ref{wreath} in Section \ref{subnormalwreath}. In  Sections \ref{ap1} and \ref{ap2} we apply Theorem \ref{ThmI} to the particular cases. Finally, the main result is proved in Sections \ref{pf1} and \ref{pf2}.

\section{Preliminaries on the $\Sigma$-invariants}  \label{prelimsigma}

Let $G$ be a finitely generated group. By definition a character $\chi : G \to \R$ is a non-zero homomorphism and the character sphere $S(G)$ is the set of equivalence classes $[\chi]$ of characters $\chi : G \to \R$, where 
two characters $\chi_1$ and $\chi_2$ are equivalent if one is obtained from the other by multiplication with any positive real number. For a fixed character $\chi : G \to \R$ define
$$
G_{\chi} = \{ g \in G \mid \chi(g) \geq 0 \}.
$$

 If not stated otherwise the modules considered in this paper are left ones. 
Recall that for an associative ring $R$ and an $R$-module $A$ we say that $A$ is of type $FP_n$ over $R$ if $A$ has a projective resolution over $R$ where all projectives in dimension up to $n$ are finitely generated i.e. there is an exact complex
$$
{\mathcal P} : \ldots \to P_i \to P_{i-1} \to \ldots \to P_0 \to A \to 0,
$$
where each $P_j$ is a projective $R$-module and for $i \leq n$ we have that $P_i$ is finitely generated.

 Let  $D$ be an integral domain.
By definition for a   $DG$-module $A$ $$\Sigma^n_D(G, A) = \{[\chi] \in S(G) \mid A \textrm{ is of type $FP_n$ as $DG_{\chi}$-module}\}.$$ When $A$ is the trivial (left) $DG$-module $D$, we denote by $ \Sigma^n(G, D)$ the invariant $ \Sigma^n_D(G, D)$. When $D=A= \Z$, we recover the original BNS invariant $\Sigma^1(G)$ of \cite{B-N-S}, i.e. $\Sigma^1(G) = \Sigma^1(G,\Z)$.

Later we will need the description of $\Sigma^1(G)$ given by the Cayley graph of  a finitely generated group $G$. Let $X$ be a finite generating set of $G$. Consider the Cayley graph $\Gamma$ of $G$ associated with the generating set $X$ i.e. the set of vertices is $V(\Gamma) = G$ and the set of edges is  $E(\Gamma) = X \times G$ with the edge $e = (x, g)$ having beginning $g$ and end $g x$. The group $G$ acts on $\Gamma$ via left multiplication on $V(\Gamma)$ and $h. e = (x, hg) $ for any $h \in G$. The letter $x$ is called the label of the edge $e$ and we write $(x^{-1} , gx)$ for the inverse of $e$ and call $x^{-1}$ the label of $e^{-1}$. For a fixed character $\chi : G \to \R$ we write $\Gamma_{\chi}$ for the subgraph of $\Gamma$ spanned by the vertices in $G_{\chi}$. By definition
$$
\Sigma^1(G) = \{ [\chi] \in S(G) \mid \Gamma_{\chi} \hbox{ is a connected graph}\}.
$$

We can define similarly the invariant $\Sigma^2(G)$. Suppose that $G$ is finitely presented and let $\mathcal{C}$ be the Cayley complex of $G$ associated with some finite presentation $G = \langle X \mid R \rangle$. This is the $2$-complex obtained from the Cayley graph, as above, by gluing the set of $2$-dimensional cells $R \times G$, where the cell associated to $(r,g)$ is glued along the boundary described by the loop with label $r$ and base point $g$ in $\Gamma$. For any character $\chi: G \to \R$, the subset $G_{\chi} \subset G$ determines a full subcomplex $\mathcal{C}_{\chi}$ of $\mathcal{C}$. By definition
$$
\Sigma^2(G) = \{ [\chi] \in S(G) \mid \mathcal{C}_{\chi} \hbox{ is 1-connected for some finite presentation } \langle X|R\rangle  \hbox{ of } G\}.
$$

Finally, the higher homotopical invariants $\Sigma^n(G)$ can be defined for any group of type $F_n$ via the equality $\Sigma^n(G) = \Sigma^2(G) \cap \Sigma^n(G,\Z)$ for all $n \geq 2$. 

The first result is folklore, it is an obvious corollary of the fact that $\Sigma^1(G, \Z) = \Sigma^1(G)$ and tensoring is a right exact functor.

\begin{lemma} \label{quotient-sigma1} Let $\pi : G_1 \to G_2$ be an epimorphism of finitely generated groups, $\mu_2 : G_2 \to \mathbb{R}$ be a character (i.e. non-zero homomorphism) and $\mu_1 = \mu_2 \circ \pi$. Suppose that $[\mu_1] \in \Sigma^1(G_1)$. Then $[\mu_2] \in \Sigma^1(G_2)$.
\end{lemma}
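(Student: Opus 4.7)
The plan is to unwind the module-theoretic definition of $\Sigma^1(G,\Z)$ and use right-exactness of the tensor product, exactly as the hint suggests.

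First, I would record the key structural fact: since $\mu_1 = \mu_2 \circ \pi$ and $\pi$ is surjective, the restriction $\pi|_{(G_1)_{\mu_1}} : (G_1)_{\mu_1} \to (G_2)_{\mu_2}$ is a surjection of monoids. Indeed, for any $h \in (G_2)_{\mu_2}$ pick any lift $g \in G_1$ with $\pi(g) = h$; then $\mu_1(g) = \mu_2(h) \geq 0$, so $g \in (G_1)_{\mu_1}$. Consequently the induced ring homomorphism $R_1 := \Z (G_1)_{\mu_1} \twoheadrightarrow \Z (G_2)_{\mu_2} =: R_2$ is surjective, and the augmentation $R_1 \to \Z$ factors through $R_2 \to \Z$.

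Next, invoking the Bieri--Renz identification $\Sigma^1(G,\Z) = \Sigma^1(G)$, the hypothesis $[\mu_1] \in \Sigma^1(G_1)$ translates to: the trivial module $\Z$ is of type $FP_1$ over $R_1$, i.e.\ finitely presented. Choose accordingly a finite partial free presentation
\[
R_1^{n_1} \longrightarrow R_1^{n_0} \longrightarrow \Z \longrightarrow 0.
\]
Now apply the right-exact functor $R_2 \otimes_{R_1} -$. Since $R_2 \otimes_{R_1} R_1^{n_i} = R_2^{n_i}$ and $R_2 \otimes_{R_1} \Z = \Z$ (because the $R_1$-action on $\Z$ factors through $R_2$, so the augmentation is compatible), we obtain a right-exact sequence
\[
R_2^{n_1} \longrightarrow R_2^{n_0} \longrightarrow \Z \longrightarrow 0
\]
of $R_2$-modules. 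Hence $\Z$ is finitely presented over $R_2$, which means $[\mu_2] \in \Sigma^1(G_2, \Z) = \Sigma^1(G_2)$, as required.

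There is essentially no hard step here; the only point to be careful about is the compatibility of the augmentations that allows the identification $R_2 \otimes_{R_1} \Z \cong \Z$, and the fact that $(G_1)_{\mu_1}$ actually surjects onto $(G_2)_{\mu_2}$ (which uses the hypothesis $\mu_1 = \mu_2 \circ \pi$ in an essential way, rather than just $\pi$ being a surjection of groups). A perhaps more elementary alternative would be to work with the Cayley graph definition directly, pushing paths in $\Gamma_1$ to paths in $\Gamma_2$ via $\pi$, but the module-theoretic argument above is cleaner and parallels the standard proofs in the literature.
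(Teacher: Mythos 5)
Your argument is correct and is precisely the argument the paper has in mind: the authors dispose of this lemma in one sentence, calling it an obvious corollary of $\Sigma^1(G,\Z)=\Sigma^1(G)$ and the right-exactness of tensoring, which is exactly what you have spelled out (including the key observation that $(G_1)_{\mu_1}$ surjects onto $(G_2)_{\mu_2}$ and that $R_2\otimes_{R_1}\Z\cong\Z$). No discrepancy to report.
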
 

\begin{theorem} \cite[Thm.~ 9.3]{Meinert-VanWyk} \label{teo-Sigma.de.gr.=.Sigma.de.subgr.}
	Let $H$ be a subgroup of $G$, $A$ be a $DG$-module and $\xi: G \rightarrow \R$ be a character. If $[G:H] < \infty$ then $$[\xi|_H] \in \Sigma^n_D(H,A) \Leftrightarrow [\xi] \in \Sigma^n_D(G,A).$$
	In particular, if $n=0$, then $$A \textrm{ is a finitely generated } DG_{\xi} \textrm{-module } \Leftrightarrow  A \textrm{ is a finitely generated } DH_{\xi|_H} \textrm{-module}.$$
\end{theorem}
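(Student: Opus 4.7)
The plan is to unwind the definition of $\Sigma^n_D$: it suffices to show that the $DG$-module $A$ is of type $FP_n$ over the monoid ring $DG_\xi$ if and only if it is of type $FP_n$ over $DH_\chi$, where $\chi = \xi|_H$. First I would replace $H$ by its normal core $N = \bigcap_{g\in G} gHg^{-1}$; since $[G:N]$ and $[H:N]$ are both finite and $N$ is normal in $G$, the general statement will follow from the two equivalences for $N\leq G$ and $N\leq H$. Thus we may assume that $H$ is normal in $G$, in which case the restricted character $\chi$ is $G$-invariant under conjugation because $\xi$ factors through the abelianization of $G$.

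For the base case $n=0$, fix a system of representatives $\{t_1, \ldots, t_r\}$ for the right $H$-cosets in $G$. Since $\chi$ is a non-trivial character of $H$, for each $k$ we may choose $f_k \in H$ with $\chi(f_k) \leq -\xi(t_k)$ by taking a sufficiently high power of any element with negative $\chi$-value. Each $g \in G_\xi$ has a unique decomposition $g = ht_k$ with $h \in H$ and $\chi(h) \geq -\xi(t_k)$; writing $h = (hf_k^{-1})f_k$ we obtain $g = (hf_k^{-1})\cdot (f_kt_k)$ with $hf_k^{-1} \in H_{\chi \geq 0}$. Consequently, if $S \subseteq A$ is a finite $DG_\xi$-generating set, then $\{f_kt_k \cdot s : 1 \leq k \leq r,\ s \in S\}$ is a finite $DH_\chi$-generating set, where the action of $f_kt_k$ on $s$ uses the ambient $DG$-module structure on $A$. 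The reverse implication is immediate from $DH_\chi \subseteq DG_\xi$.

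The inductive step is the subtle part. One would like to iterate the base-case argument on the syzygies of a finitely generated free $DG_\xi$-resolution of $A$, but such a syzygy is only a $DG_\xi$-module and may fail to extend to a $DG$-module, so the transversal trick does not apply verbatim. The way around this is either to start from a $DG$-module resolution of $A$ (which exists since $A$ itself is a $DG$-module) whose restriction to $DG_\xi$ realizes the $FP_n$ property, use Schanuel's lemma to compare with an arbitrary $DG_\xi$-free resolution, and exploit the fact (a consequence of the decomposition above) that $DG_\xi$ is finitely generated on both sides as a $DH_\chi$-module in the normal case; or, following the style of \cite{B-Renz}, to use the homological characterization of $\Sigma^n$ via Novikov-type completions and observe that the completions of $DG$ along $\xi$ and of $DH$ along $\chi$ are related by a finite-rank extension, so the vanishing of the relevant Tor groups passes between them. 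This transfer of finiteness through the monoid-ring extension $DH_\chi \subseteq DG_\xi$ is the technical heart of the proof.
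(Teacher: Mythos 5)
The paper itself offers no proof of this statement---it is quoted directly from Meier--Meinert--VanWyk \cite[Thm.~9.3]{Meinert-VanWyk}---so I can only judge your argument on its own merits. The reduction to a normal subgroup of finite index via the normal core is fine, and your case $n=0$ is correct: writing $g=(hf_k^{-1})(f_kt_k)$ with $hf_k^{-1}\in H_{\chi}$ really does turn a finite $DG_{\xi}$-generating set $S$ of $A$ into the finite $DH_{\chi}$-generating set $\{f_kt_k\cdot s\}$, and the converse is immediate from $DH_{\chi}\subseteq DG_{\xi}$.

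The inductive step, however, is where the entire content of the theorem lies, and you do not prove it; worse, the first of your two proposed routes rests on a false assertion. It is not true in general that $DG_{\xi}$ is finitely generated as a (left or right) $DH_{\chi}$-module, even when $H$ is normal. Take $G=\Z^2$, $H=2\Z\times\Z$ and $\xi(a,b)=a+\sqrt{2}\,b$: then $\chi(H)=2\Z+\sqrt{2}\Z$ is dense in $\R$ while $-\xi(1,0)=-1\notin\chi(H)$, so the set $G_{\xi}\cap H\cdot(1,0)=\{h\cdot(1,0)\mid h\in H,\ \chi(h)\geq -1\}$ contains no element of minimal $\xi$-value and is a strictly increasing union of $H_{\chi}$-orbits; hence $D[G_{\xi}]$ is not finitely generated over $DH_{\chi}$. (Your $n=0$ argument survives precisely because the generators $f_kt_k\cdot s$ are formed inside $A$ using the ambient $DG$-action---you generate $A$, not $DG_{\xi}$, and $f_kt_k$ itself typically lies outside $G_{\xi}$.) So a Schanuel-type comparison across the extension $DH_{\chi}\subseteq DG_{\xi}$ does not go through as stated. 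Your second route is the viable one: for the Novikov completions one genuinely has $\widehat{DG}_{\xi}=\bigoplus_{k}\widehat{DH}_{\chi}\,t_k$, free of rank $[G:H]$ on either side, and a Shapiro-type argument then transfers the vanishing of the relevant homology groups. But that route first requires the characterization of $\Sigma^n_D(G,A)$ for an \emph{arbitrary} coefficient module $A$ in terms of Novikov homology (the version recorded in Section \ref{subnormalwreath} is stated only for the trivial module and groups of type $FP_n$), together with the classical finite-index invariance of $FP_n$ over the group rings $DG$ and $DH$; none of this is supplied. As written, the proof is complete only for $n=0$.
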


In \cite{B-G} Bieri and Geoghegan proved  a formula for the homological  invariants $\Sigma^n( - , F)$ for a direct product of groups, where $F$ is the trivial module and $F$ is a field.  If $F$ is substituted with the trivial module $\Z$ the result is wrong in both homological and homotopical settings provided that the dimension is sufficiently high, see \cite{Meinert-VanWyk} and \cite{Schutz}. 

\begin{theorem}  {\bf Direct product formula} \cite[Thm.~ 1.3, and Prop.~ 5.2]{B-G} \label{teo-conjec.prod.dir.corpo}
	Let $n \geq 0$ be an integer, $G_1, G_2$ be finitely generated groups and $F$ be a field. Then, $$\Sigma^n(G_1 \times G_2, F)^c = \displaystyle\bigcup_{p=0}^n \Sigma^p(G_1, F)^c * \Sigma^{n-p}(G_2, F)^c,$$ where $*$ denotes the join of sets in $S(G_1 \times G_2)$ and $^c$ denotes the set-theoretic complement of subsets of a suitable character sphere.
\end{theorem}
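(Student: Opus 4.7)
The plan is to prove the direct product formula by reducing it to a vanishing statement about Novikov homology, where the hypothesis that $F$ is a field makes a Künneth argument run cleanly. The first ingredient is Sikorav's homological reformulation of the $\Sigma$-invariants: for a group $G$ of type $FP_n$ and a character $\chi : G \to \R$, one has $[\chi] \in \Sigma^n(G,F)$ if and only if the Novikov homology $H_i(G, \widehat{FG}_{-\chi})$ vanishes for every $0 \leq i \leq n$, where $\widehat{FG}_{-\chi}$ denotes the completion of $FG$ with respect to the valuation $-\chi$. This turns the combinatorial question about membership in $\Sigma^n$ into one about which homology groups vanish.

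Next I would identify $S(G_1 \times G_2)$ with the spherical join $S(G_1) * S(G_2)$: any character $\chi$ on $G_1 \times G_2$ decomposes uniquely as $\chi = \chi_1 + \chi_2$ with $\chi_i$ a (possibly zero) homomorphism $G_i \to \R$, and up to rescaling the class $[\chi]$ is determined by the two classes $[\chi_i]$, placed on the join according to the ratio of their norms. Under this identification, a point of the join $\Sigma^p(G_1, F)^c * \Sigma^{n-p}(G_2, F)^c$ is precisely a class $[\chi]$ for which both $H_p(G_1, \widehat{FG_1}_{-\chi_1})$ and $H_{n-p}(G_2, \widehat{FG_2}_{-\chi_2})$ are nonzero, by the Sikorav criterion of the first step.

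The core computation is a Künneth-type isomorphism
\[ H_n(G_1 \times G_2, \widehat{F(G_1\times G_2)}_{-\chi}) \;\cong\; \bigoplus_{p+q=n} H_p(G_1, \widehat{FG_1}_{-\chi_1}) \otimes_F H_q(G_2, \widehat{FG_2}_{-\chi_2}). \]
To produce this, choose free $FG_i$-resolutions $P_i \to F$ of finite type up to degree $n$; their tensor product $P_1 \boxtimes P_2$ is a free resolution of $F$ over $F(G_1 \times G_2)$, and tensoring with $\widehat{F(G_1\times G_2)}_{-\chi}$ gives a double complex whose spectral sequence, because $F$ is a field, has no $\mathrm{Tor}$ terms and collapses to the stated decomposition. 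Together with the first two steps this gives the equivalence $[\chi] \in \Sigma^n(G_1\times G_2, F)^c$ $\Leftrightarrow$ there exists a splitting $p+q=n$ with $[\chi_1] \in \Sigma^p(G_1,F)^c$ and $[\chi_2] \in \Sigma^q(G_2, F)^c$, which is exactly the join formula.

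The main obstacle is the Künneth step, and specifically identifying $\widehat{F(G_1\times G_2)}_{-\chi}$ with a completion of $\widehat{FG_1}_{-\chi_1} \otimes_F \widehat{FG_2}_{-\chi_2}$ that is flat enough over $P_1 \boxtimes P_2$ to compute the desired homology. One has to check that the natural filtration by the values of $\chi$ is compatible with the tensor product factorization, and control the interaction between the completion and the boundary maps in the bicomplex. This is exactly where the field hypothesis is indispensable: over $\Z$ the spectral sequence picks up $\mathrm{Tor}_*^{\Z}$ obstructions and the completed ring no longer has a clean tensor-product description, which is why the naive $\Z$-coefficient analogue of the formula fails in high dimensions, as demonstrated by the counterexamples of Schütz and Meier-Meinert-Van Wyk cited above.
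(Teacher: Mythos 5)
You should first note that the paper does not prove this statement at all: it is quoted from Bieri--Geoghegan \cite[Thm.~1.3 and Prop.~5.2]{B-G}, so there is no in-paper argument to compare against; what matters is whether your sketch would actually yield a proof. The reduction to Novikov homology via the Sikorav-type criterion and the identification of $S(G_1\times G_2)$ with the join $S(G_1)*S(G_2)$ are fine as far as they go (modulo a small loose end: the theorem is stated for arbitrary finitely generated groups, while the Novikov criterion requires type $FP_n$ over $F$, so the degenerate cases where some $G_i$ fails the relevant finiteness condition must be handled separately).

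The K\"unneth step, however, is not a technical obstacle to be ``controlled'' --- it is where the entire content of the theorem sits, and as written the argument for it fails. The complex computing $H_*\bigl(G_1\times G_2,\widehat{F(G_1\times G_2)}_{-\chi}\bigr)$ is $(P_1\boxtimes P_2)\otimes_{F[G_1\times G_2]}\widehat{F(G_1\times G_2)}_{-\chi}$, and this is \emph{not} the tensor product over $F$ of the complexes $P_i\otimes_{FG_i}\widehat{FG_i}_{-\chi_i}$, because the completed group ring of the product is strictly larger than $\widehat{FG_1}_{-\chi_1}\otimes_F\widehat{FG_2}_{-\chi_2}$. Already for $G_1=G_2=\Z=\langle t_1\rangle,\langle t_2\rangle$ with $\chi(t_1)=\chi(t_2)=1$, the element $\sum_{n\geq 0}t_1^{2n}t_2^{-n}$ lies in the completed group ring of $\Z^2$ but not in $F((t_1))\otimes_F F((t_2))$, since every element of the latter has $t_2$-support bounded below. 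Hence the double complex you describe is not one to which the classical K\"unneth spectral sequence (collapsing because $F$ is a field) applies: filtering in one direction produces $H_q(G_2,-)$ of the full completed ring regarded as an $FG_2$-module, not $H_q(G_2,\widehat{FG_2}_{-\chi_2})$ tensored with something free. What you would actually need is that the ring inclusion $\widehat{FG_1}_{-\chi_1}\otimes_F\widehat{FG_2}_{-\chi_2}\hookrightarrow\widehat{F(G_1\times G_2)}_{-\chi}$ induces the correct (non-)vanishing of homology in degrees $\leq n$; that assertion is essentially equivalent to the hard inclusion ``$\supseteq$'' of the formula, it is exactly the point where Sch\"utz's counterexamples kill the integral analogue, and Bieri and Geoghegan's actual proof does not proceed by such a K\"unneth collapse. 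As it stands, your proposal reduces the theorem to an unproved statement that carries all of its difficulty, while the direction it could plausibly deliver is already contained in Gehrke's Theorem~\ref{teo-conjec.prod.dir.corpo2}.
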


Note that by definition $\Sigma^0(G, F) = S(G)$.

The above theorem means that if $\mu : G_1 \times G_2 \to \R$ is a character with $\mu_1 = \mu \mid_{G_1}$ and  $\mu_2 = \mu \mid_{G_2}$ then $[\mu] \in \Sigma^n(G_1 \times G_2, F)^c = S(G_1 \times G_2, F) \setminus \Sigma^n(G_1 \times G_2, F)$ precisely when one of the following conditions hold :  

1. $\mu_1 \not=0$, $\mu_2 \not= 0$ and $[\mu_1] \in \Sigma^p(G_1, F)^c = S(G_1) \setminus \Sigma^p(G_1, F), [\mu_2] \in \Sigma^{n-p} (G_2, F)^c = S(G_2) \setminus \Sigma^{n-p}(G_2, F)$ for some $0 \leq p \leq n$;

or 

2. one of the characters $\mu_1$, $\mu_2$ is trivial and for the non-trivial one, say $\mu_i$, we have $[\mu_i] \in \Sigma^n(G_i, F)^c = S(G_i) \setminus \Sigma^n(G_i, F)$.

\begin{theorem} \cite{G}  \label{teo-conjec.prod.dir.corpo2}
	Let $n$ be a positive integer and let $G_1, G_2$ be finitely generated groups. If $1 \leq n \leq 2$, then
$$\Sigma^n(G_1 \times G_2, \Z)^c = \displaystyle\bigcup_{p=0}^n \Sigma^p(G_1, \Z)^c * \Sigma^{n-p}(G_2, \Z)^c,$$ where $*$ denotes the join of sets of the $S(G_1 \times G_2)$ and $^c$ denotes the set-theoretic complement of subsets of a suitable character sphere. Furthermore, the inclusion $``\subseteq"$ holds for all $n$.
\end{theorem}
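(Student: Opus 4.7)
The theorem decomposes into the inclusion $\subseteq$ (valid for all $n$) and the reverse $\supseteq$ (valid only when $n \leq 2$). My plan addresses these separately.

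For the $\subseteq$ direction I would run a K\"unneth-style construction. Decompose $\mu = \mu_1 + \mu_2$ with $\mu_i := \mu|_{G_i}$ under the natural identification of $G_i$ as a subgroup of $G_1 \times G_2$. Suppose toward a contradiction that $[\mu] \in \Sigma^n(G_1 \times G_2, \Z)^c$ while $[\mu]$ lies outside every join $\Sigma^p(G_1, \Z)^c * \Sigma^{n-p}(G_2, \Z)^c$. In the degenerate case $\mu_2 = 0$, the failure of the join hypothesis forces $[\mu_1] \in \Sigma^n(G_1, \Z)$; since $(G_1 \times G_2)_\mu = G_{1,\mu_1} \times G_2$ and $G_2$ inherits type $FP_n$ from $G_1 \times G_2$, I would tensor a finitely generated partial projective resolution $P_\bullet \to \Z$ of length $n$ over $\Z G_{1,\mu_1}$ with an analogous $Q_\bullet \to \Z$ over $\Z G_2$ to obtain a finitely generated partial resolution of $\Z$ over $\Z(G_{1,\mu_1} \times G_2)$, contradicting the assumption. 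The symmetric case $\mu_1 = 0$ is identical.

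In the main case $\mu_1, \mu_2 \neq 0$, set $p_i := \max\{p \geq 0 : [\mu_i] \in \Sigma^p(G_i, \Z)\}$; the negation of the join hypothesis translates to $p_1 + p_2 \geq n - 1$. The task is to construct a finitely generated partial projective resolution of $\Z$ over $\Z(G_1 \times G_2)_\mu$ from the two partial resolutions over the $\Z G_{i,\mu_i}$. The main subtlety is that the submonoid inclusion $G_{1,\mu_1} \times G_{2,\mu_2} \subsetneq (G_1 \times G_2)_\mu$ is strict, so a plain tensor product only lives over the smaller ring and requires a careful base change. I would either pass to a Sikorav-style characterization stating that $[\chi] \in \Sigma^n(G, \Z)$ iff $H_i(G, \widehat{\Z G}_{-\chi}) = 0$ for $i \leq n$ and verify the vanishing via a K\"unneth spectral sequence applied to the Novikov completions, or directly assemble an appropriate double complex resolution over $\Z(G_1 \times G_2)_\mu$.

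For the reverse inclusion at $n = 1$, I would use the Cayley graph description $\Sigma^1(G) = \Sigma^1(G, \Z)$. Finite generating sets $X_i \subseteq G_i$ give $X_1 \cup X_2$ generating $G_1 \times G_2$. When both $\mu_i$ are nonzero, choose $x_i \in X_i$ with $\mu_i(x_i) > 0$: these provide staircase paths that stay $\mu$-positive and connect any vertex to the identity, so $[\mu] \in \Sigma^1(G_1 \times G_2)$. When $\mu_2 = 0$, the $\mu$-positive subgraph is the product of the full Cayley graph of $G_2$ with the $\mu_1$-positive subgraph of $G_1$, whose connectedness is equivalent to $[\mu_1] \in \Sigma^1(G_1)$, giving both directions. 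For $n = 2$, I would use a Cayley $2$-complex for $G_1 \times G_2$ built from presentations of the $G_i$ together with commutator relations $[x_1, x_2] = 1$ ($x_i \in X_i$); when $[\mu_i] \in \Sigma^1(G_i)$ for both $i$, any $1$-cycle in the $\mu$-positive subcomplex can be reduced to pieces filled in each factor and spliced together via the commutator $2$-cells, yielding $[\mu] \in \Sigma^2(G_1 \times G_2, \Z)$.

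The main obstacle is the $\subseteq$ direction when both $\mu_i$ are nonzero. A naive tensor of $FP_{p_1}$ and $FP_{p_2}$ partial resolutions only yields $FP_{\min(p_1, p_2)}$ on the total complex by bidegree counting, which falls short of $FP_n$ precisely in the regime $p_1, p_2 < n$ but $p_1 + p_2 \geq n - 1$ that the hypothesis allows. Overcoming this requires either a carefully truncated double-complex argument that fully exploits all bidegrees $i + j \leq n$, or the equivalent Novikov $\mathrm{Tor}$-vanishing computation; this is the technical heart of the result.
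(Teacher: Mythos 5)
The paper does not prove this statement; it is quoted verbatim from Gehrke \cite{G}, so there is no in-paper argument to compare yours against, and I assess the proposal on its own terms. You correctly split the claim into the two inclusions, correctly reduce the complement-of-the-union condition to $p_1+p_2\geq n-1$ in the case $\mu_1,\mu_2\neq 0$, and your treatment of $n=1$ is fine. But there is a genuine gap at the heart of the result: the inclusion $\supseteq$ for $n=2$ --- the only part of the asserted equality that goes beyond the general inclusion $\subseteq$ --- is not addressed. Your ``$n=2$'' paragraph shows that if $[\mu_i]\in\Sigma^1(G_i)$ for both $i$ then one can fill $1$-cycles and get $[\mu]\in\Sigma^2(G_1\times G_2,\Z)$; that is (part of) the contrapositive of $\subseteq$, not of $\supseteq$. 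What $\supseteq$ at $n=2$ actually demands is the negative statement: if $\mu_1,\mu_2\neq0$ with $[\mu_1]\in\Sigma^1(G_1)^c$ and $[\mu_2]\in\Sigma^1(G_2)^c$ (the $p=1$ join), or if one restriction vanishes and the other lies in $\Sigma^2(G_i,\Z)^c$, then $[\mu]\notin\Sigma^2(G_1\times G_2,\Z)$. Producing these obstructions is the hard content of Gehrke's theorem --- it is precisely the direction that fails for $n\geq 3$ with $\Z$-coefficients by Sch\"utz's examples --- and nothing in your outline produces them.

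The second gap is the one you flag yourself: the $\subseteq$ direction in the main case ($\mu_1,\mu_2\neq0$, $p_1+p_2\geq n-1$) is left as ``the technical heart'' with two candidate strategies and no proof. The Novikov route you name does close it, and by exactly the mechanism this paper uses to prove Theorem \ref{subnormal}: by the Sikorav--Bieri criterion it suffices to show $H_j(G_1\times G_2,\widehat{\Z G}_{\mu})=0$ for $j\leq n$; the ring $\widehat{\Z G}_{\mu}$ carries commuting module structures over $\widehat{\Z G_1}_{\mu_1}$ and $\widehat{\Z G_2}_{\mu_2}$, so in the LHS spectral sequence $E^2_{pq}=H_p(G_1,H_q(G_2,\widehat{\Z G}_{\mu}))$ Lemma \ref{Schu} kills every term with $q\leq p_2$, and, since the $\widehat{\Z G_1}_{\mu_1}$-structure passes to $H_q(G_2,-)$, also every term with $p\leq p_1$; the potentially nonzero terms then satisfy $p+q\geq p_1+p_2+2\geq n+1$. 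So this gap is fillable along the lines you indicate, but as written it is a declared hole, and your concern that a tensor product ``only yields $FP_{\min(p_1,p_2)}$ by bidegree counting'' indicates the double-complex alternative was not actually worked out. A smaller point: both your tensor-product argument in the degenerate case and the Novikov criterion require $G_1\times G_2$ to be of type $FP_n$, which finite generation alone does not supply; this hypothesis should be made explicit.
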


This implies for instance that, for a direct product $P=G_1 \times \cdots \times G_{n+1}$ of type $FP_n$, any character $\chi: P \to \R$ such that $\chi |_{G_i} \neq 0 $ for all $1 \leq i \leq n+1$ represents an element of $\Sigma^n(P,\Z)$.

\begin{theorem} \label{BRhomotopic} \cite{B-Renz}, \cite{Renzthesis} Let $G$ be a group of type $F_n$ (resp. $FP_n$) and $N$ be a subgroup of $G$ that contains the commutator subgroup $G'$. Then $N$ is of type $F_n$ (resp. $FP_n$) if and only if
	$$S(G,N) = \{ [\chi] \in S(G) \mid \chi(N) = 0 \} \subseteq \Sigma^n(G) \ \ (\hbox{resp. } \Sigma^n(G,\Z)) .$$
\end{theorem}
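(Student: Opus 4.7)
Following the approach of Bieri--Renz \cite{B-Renz} and Renz \cite{Renzthesis}, I would focus on the homological ($FP_n$) case first and then deduce the homotopical case. Since $N \supseteq G'$ and $G$ is finitely generated, the quotient $Q := G/N$ is a finitely generated abelian group, and a character $\chi : G \to \R$ vanishes on $N$ if and only if it factors through a (necessarily unique) character $\bar{\chi} : Q \to \R$. This gives a natural identification of $S(G,N)$ with the sphere $S(Q)$ of dimension $\mathrm{rank}(Q) - 1$. The crucial reformulation is that $N$ is of type $FP_n$ if and only if the induced $\Z G$-module $\Z Q \cong \Z G \otimes_{\Z N} \Z$ is of type $FP_n$ over $\Z G$; one direction is an immediate tensoring-up construction, while the converse follows from the Bieri characterization of $FP_n$ by commutation of $H_i(N,-)$ with products for $i \leq n-1$, applied via Shapiro's lemma.

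For the forward direction, suppose $N$ is of type $FP_n$. Fix $\chi \in S(G,N)$ with induced character $\bar{\chi}$ on $Q$, so that $G_\chi$ is the preimage of the submonoid $Q_{\bar{\chi}} \subseteq Q$, yielding a monoid extension $N \hookrightarrow G_\chi \twoheadrightarrow Q_{\bar{\chi}}$. Since every finitely generated abelian group has its $\Sigma$-invariants equal to the full character sphere, the trivial module $\Z$ is of type $FP_n$ over $\Z Q_{\bar{\chi}}$; combining this with $N$ being of type $FP_n$ via a Lyndon--Hochschild--Serre style spectral sequence for the monoid extension shows that $\Z$ is of type $FP_n$ over $\Z G_\chi$, i.e.\ $[\chi] \in \Sigma^n(G, \Z)$.

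For the backward direction, assume $S(G,N) \subseteq \Sigma^n(G, \Z)$. By compactness of $S(Q)$ and openness of $\Sigma^n(G, \Z)$ in $S(G)$, cover $S(G,N)$ by finitely many open neighborhoods of characters $[\chi_1], \ldots, [\chi_k]$ all lying in $\Sigma^n(G, \Z)$. Each condition $[\chi_i] \in \Sigma^n(G, \Z)$ provides a partial finite-type $\Z G_{\chi_i}$-resolution of $\Z$. I would patch these pieces together by a Bieri--Strebel-type valuation argument: control the $\chi_i$-supports of chains along finitely many directions covering $S(Q)$ to inductively build, in degrees $\leq n$, a projective resolution of $\Z Q$ by finitely generated $\Z G$-modules. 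This yields that $\Z Q$ is of type $FP_n$ over $\Z G$ and hence that $N$ is of type $FP_n$.

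The homotopical version follows the same pattern, using the Cayley $2$-complex and higher $K(G,1)$ skeleta in place of chain complexes; alternatively, invoke the identity $\Sigma^n(G) = \Sigma^n(G, \Z) \cap \Sigma^2(G)$ for $n \geq 2$, so that the hypothesis $S(G,N) \subseteq \Sigma^n(G)$ yields both $FP_n$-ness of $N$ (from the homological case) and finite presentability of $N$ (from $S(G,N) \subseteq \Sigma^2(G)$), hence $N$ is of type $F_n$. I expect the patching step in the backward direction to be the main obstacle: translating local $FP_n$ data at each $\chi_i$ into a single globally finitely generated resolution requires a careful multi-direction valuation analysis, and this is the technical heart of the Bieri--Renz argument.
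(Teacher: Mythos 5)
This theorem is quoted from \cite{B-Renz} and \cite{Renzthesis}; the paper itself supplies no proof beyond the remark that the homotopical case for $n\geq 3$ follows from the homological case and Renz's $n=2$ case via $\Sigma^n(G)=\Sigma^n(G,\Z)\cap\Sigma^2(G)$, together with the standard fact that a finitely presented group of type $FP_n$ is of type $F_n$. Your final paragraph reproduces exactly this reduction, and your forward direction is sound in outline: tensoring a finite-type free $\Z N$-resolution of $\Z$ up to $\Z G_{\chi}$ shows that $\Z Q_{\bar{\chi}}$ is of type $FP_n$ over $\Z G_{\chi}$, a finite-type free $\Z Q_{\bar{\chi}}$-resolution of $\Z$ exists because $Q$ is finitely generated abelian, and the two are spliced by the standard lemma on $FP_n$ in extensions of modules (a spectral sequence alone does not yield finiteness conclusions, but this is a cosmetic repair). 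The identification ``$N$ is $FP_n$ iff $\Z Q$ is $FP_n$ over $\Z G$'' via induced modules is likewise the right reformulation and is in \cite{Bieribook}.

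The genuine gap is the backward homological direction, which is the entire content of the theorem. Writing that you would ``patch these pieces together by a Bieri--Strebel-type valuation argument'' names the difficulty without resolving it. What is actually required is the compact-set criterion (Theorem \ref{BR-compact} here): the compact subsphere $S(G,N)\subseteq\Sigma^n(G,\Z)$ produces a \emph{single finite} set $\Phi$ of chain endomorphisms of a fixed finite-type free resolution over $\Z G$, uniformly raising valuations in every direction of $S(G,N)$, together with chain homotopies $\sigma_{\varphi}$ with controlled support; one then proves that the subcomplex of chains whose support projects into the ball of radius $r$ about $0$ in $(G/N)\otimes\R$ is exact in degrees $<n$ for $r$ large, and this subcomplex is a free $\Z N$-complex with finitely many $N$-orbits of basis elements. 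Your covering-by-neighbourhoods picture does not by itself produce this uniform pushing data, nor the quantitative (Pythagorean) estimate showing that supports can be driven into the ball; Sections \ref{pf1}--\ref{pf2} of the paper carry out a relative version of precisely this construction for Theorem \ref{ThmI}, and the same machinery is what \cite{B-Renz} uses for the absolute statement. As written, therefore, your proposal correctly identifies the strategy and proves the easy direction, but does not prove the theorem.
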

Bieri and Renz proved the
 homological version of Theorem \ref{BRhomotopic} in \cite{B-Renz}. The homotopical version for $n = 2$ was proved by Renz in \cite{Renzthesis} and the general homotopical case $ n  \geq 3$ follows from the formula $\Sigma^n(G) = \Sigma^n(G, \mathbb{Z}) \cap \Sigma^2(G)$.

The following theorem can be traced back to several papers : Gehrke results in \cite{G}; the Meier, Meinert and VanWyk description of the $\Sigma$-invariants for right-angled Artin groups \cite{Meinert-VanWyk} or the Meinert result on the $\Sigma$-invariants for direct products of virtually free groups \cite{Me2}.

\begin{theorem} \label{new111} \cite{G}, \cite{Meinert-VanWyk}, \cite{Me2} If $\chi : F_2^s = F_2 \times \ldots \times F_2 \to \mathbb{R}$ is a character whose restriction on precisely $n$ copies of $F_2$ is non-zero, then 	$[\chi] \in \Sigma^{n-1}(F_2^s) \setminus \Sigma^{n}(F_2^s)  $. 
\end{theorem}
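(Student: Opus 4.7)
The plan is to derive the statement from the direct product formulas (Theorems~\ref{teo-conjec.prod.dir.corpo} and~\ref{teo-conjec.prod.dir.corpo2}) combined with the complete vanishing of the $\Sigma$-invariants of the single factor $F_2$. Write $\chi = \chi_1 + \cdots + \chi_s$, where $\chi_i$ is the restriction to the $i$-th copy of $F_2$, and set $T = \{i : \chi_i \neq 0\}$, so $|T| = n$. Since $F_2$ is a nontrivial free product, its Cayley graph with respect to the two free generators is a tree, and a direct analysis shows that for every nontrivial character $\xi : F_2 \to \R$ there is some generator $s$ with $\xi(s) < 0$ and a vertex of $(F_2)_\xi$ in the subtree past $s$; the unique tree path from $1$ to such a vertex is forced to traverse $s \notin (F_2)_\xi$, so the induced subgraph is disconnected. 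Hence $\Sigma^1(F_2) = \Sigma^1(F_2,\Z) = \emptyset$, and using the standard inclusions $\Sigma^k \subseteq \Sigma^1$ and $\Sigma^k(G,\Z) \subseteq \Sigma^k(G,F)$ (the latter by tensoring a $\Z G_\xi$-projective resolution with any field $F$), we obtain $\Sigma^k(F_2,F)^c = S(F_2)$ for all $k \geq 1$ and $\Sigma^0(F_2,F)^c = \emptyset$.

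For the non-membership $[\chi] \notin \Sigma^n(F_2^s)$, iterate Theorem~\ref{teo-conjec.prod.dir.corpo} over a field $F$: the set $\Sigma^n(F_2^s, F)^c$ decomposes as a union over partitions $p_1 + \cdots + p_s = n$ of iterated joins $\Sigma^{p_1}(F_2,F)^c * \cdots * \Sigma^{p_s}(F_2,F)^c$. A point $[\chi]$ belongs to such a join exactly when, for every index $i$, either $p_i \geq 1$ (so the slot is $S(F_2)$ and any $\chi_i$ is allowed) or $p_i = 0$ and $\chi_i = 0$ (since the slot is empty). Taking the partition with $p_i = 1$ for $i \in T$ and $p_i = 0$ otherwise gives $\sum p_i = n$ and witnesses $[\chi] \in \Sigma^n(F_2^s,F)^c$. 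The inclusions $\Sigma^n(F_2^s) \subseteq \Sigma^n(F_2^s,\Z) \subseteq \Sigma^n(F_2^s,F)$ then yield $[\chi] \notin \Sigma^n(F_2^s)$.

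For the membership $[\chi] \in \Sigma^{n-1}(F_2^s)$, apply the $\subseteq$ direction of Theorem~\ref{teo-conjec.prod.dir.corpo2}, which holds in all dimensions, iteratively at $m = n - 1$. If $[\chi]$ belonged to the right-hand side there would exist a partition $\sum p_i = n - 1$ with $p_i \geq 1$ for every $i \in T$, forcing $\sum p_i \geq |T| = n > n - 1$, a contradiction. So $[\chi] \in \Sigma^{n-1}(F_2^s, \Z)$. For the homotopical invariant, the case $n = 2$ is immediate from $\Sigma^1(F_2^s) = \Sigma^1(F_2^s, \Z)$, while for $n \geq 3$ we use $\Sigma^{n-1}(F_2^s) = \Sigma^{n-1}(F_2^s, \Z) \cap \Sigma^2(F_2^s)$, so it remains to check $[\chi] \in \Sigma^2(F_2^s)$. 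The same partition-counting argument at $m = 2$, now invoking the homotopical analog of the $\subseteq$ inclusion (available from \cite{Me2} on products of virtually free groups, or equivalently from the MVW description of $\Sigma$-invariants for RAAGs applied to $F_2^s$), concludes the proof.

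The main obstacle is this last step: the homotopical analog of the $\subseteq$ inclusion of Theorem~\ref{teo-conjec.prod.dir.corpo2} is not stated explicitly in our preliminaries, and must be extracted from \cite{Me2} or from the MVW computation. Once that is granted, every remaining implication reduces to the same combinatorial game on partitions of the integer $n$, with no further geometric input needed.
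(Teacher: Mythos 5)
The paper does not actually prove Theorem \ref{new111}: it is quoted from \cite{G}, \cite{Meinert-VanWyk} and \cite{Me2}, so any argument you give is by construction a different route. Most of yours is sound and is the natural way to extract the statement from the stated preliminaries: the partition-counting argument for $[\chi]\in\Sigma^{n-1}(F_2^s,\Z)$ via the ``$\subseteq$'' direction of Theorem \ref{teo-conjec.prod.dir.corpo2} is exactly the intended use of that inclusion (compare the remark following it in the paper), and routing the non-membership through the Bieri--Geoghegan \emph{equality} over a field is the right idea, since over $\Z$ only the one inclusion is available and it points the wrong way for non-membership.

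There is, however, a genuine gap in the non-membership half. For the partition $p_i=1$ ($i\in T$), $p_i=0$ ($i\notin T$) to witness $[\chi]\in\Sigma^n(F_2^s,F)^c$ you need $\Sigma^1(F_2,F)=\emptyset$ for the field $F$. What you establish is $\Sigma^1(F_2)=\Sigma^1(F_2,\Z)=\emptyset$, and the inclusion you invoke, $\Sigma^k(G,\Z)\subseteq\Sigma^k(G,F)$, runs in the wrong direction: it only yields $\emptyset\subseteq\Sigma^1(F_2,F)$, which is vacuous. The fact you need is true --- for instance because $\Sigma^1(G,D)$ for the trivial module coincides with $\Sigma^1(G)$ for any nonzero ring $D$ (the Cayley-graph criterion is coefficient-independent), or because the Novikov homology of $F_2$ over $F$ has $H_0=0$ while the Novikov Euler characteristic equals $1-2=-1$, forcing $H_1\neq 0$ --- but it has to be argued; it cannot be deduced from an inclusion pointing the other way. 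The second gap is the one you flag yourself: the homotopical $\Sigma^2$-membership for $n\geq 3$ has no support in the stated preliminaries and must be imported from \cite{Me2} or from the Meier--Meinert--VanWyk description of $\Sigma$-invariants of right-angled Artin groups; since those are precisely the sources the paper cites for the whole theorem, at that point one may as well cite them for the full statement, which is what the paper does.
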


\section{Preliminaries on subdirect products, limit groups, the Virtual Surjection Conjecture and the Monoidal Virtual Surjection  Conjecture} \label{prel}

The class of limit groups contains all finite rank free groups and the orientable surface groups. It coincides with the class of the  finitely generated fully residually free groups $G$ i.e. for every finite subset $X$ of $G$ there is a free group $F$ and a homomorphism $\varphi : G \to F$ whose restriction on $X$ is injective. Limit groups are of homotopical type $F$ i.e. they are of type $FP_{\infty}$, finitely presentable and of finite cohomological dimension.
 
 Limit groups were used in the solution of the Tarski problem on the elementary theory of non-abelian free groups of finite rank obtained independently by Kharlampovich and Myasnikov  and by Sela in \cite{K-M}, \cite{Se}. By a result of Baumslag, Myasnikov and Remeslennikov in \cite{B-M-R} every finitely generated residually free group is a subdirect product of finitely many  limit groups.

A subgroup $G \subseteq G_1 \times \ldots \times G_m$ is a subdirect product if the projection map $p_i : G \to G_i$ is surjective for all $ 1 \leq i \leq m$. Denote by $p_{i_1, \ldots, i_n} : G \to G_{i_1} \times \ldots \times G_{i_n}$ the projection map that sends $(g_1, \ldots, g_m)$ to $(g_{i_1}, \ldots, g_{i_n})$.

\begin{theorem} \label{tra-la-la} \cite{Desi1} Let $G \subseteq G_1 \times \ldots \times G_m$  be a subdirect product of non-abelian limit groups $G_1, \ldots, G_m$ such that $G \cap G_i \not= 1$ for every $1 \leq i \leq m$. Then if $G$ is of type $FP_n$ for  some $n \leq m$ then $p_{i_1, \ldots, i_n}(G)$ has finite index in $G_{i_1} \times \ldots \times G_{i_n}$ for every $ 1 \leq i_1 < \ldots < i_n \leq m$.
\end{theorem}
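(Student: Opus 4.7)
The plan is to argue by contradiction using $\Sigma$-invariant techniques combined with specific properties of non-abelian limit groups. By symmetry, I would reduce to the case $(i_1,\ldots,i_n)=(1,\ldots,n)$ and write $Q = G_1 \times \cdots \times G_n$ and $H = p_{1,\ldots,n}(G) \subseteq Q$. Then $H$ is itself a subdirect product of $G_1,\ldots,G_n$ with $H \cap G_i \supseteq G \cap G_i \neq 1$, and each $H \cap G_i$ is normal in $G_i$: any $g_i \in G_i$ lifts to some $h \in H$ that differs from $(1,\ldots,g_i,\ldots,1)$ only in coordinates in $\prod_{j\neq i} G_j$, which commute with $G_i$, so conjugation by $h$ preserves $H\cap G_i$. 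The goal is to show $[Q:H]<\infty$.

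The key input is $\Sigma^n(Q,\mathbb{Z})=\emptyset$. For any non-abelian limit group $L$, the commutator $[L,L]$ is not finitely generated, so $\Sigma^1(L)=\emptyset$; hence $\Sigma^p(L,\mathbb{Q})\subseteq\Sigma^1(L,\mathbb{Q})=\Sigma^1(L)=\emptyset$ for all $p\geq 1$. Iterating Theorem \ref{teo-conjec.prod.dir.corpo} over the field $\mathbb{Q}$ for the $n$-fold product $Q$ then yields $\Sigma^n(Q,\mathbb{Q})=\emptyset$ (the only valid join decomposition would require $p_i\geq 1$ for each factor, which is available thanks to the preceding vanishing), and by flatness $\Sigma^n(Q,\mathbb{Z})\subseteq\Sigma^n(Q,\mathbb{Q})=\emptyset$. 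Introducing $\widetilde H := H\cdot[Q,Q]\leq Q$, which contains the commutator subgroup, the Bieri--Renz theorem (Theorem \ref{BRhomotopic}) then yields: $\widetilde H$ is of type $FP_n$ if and only if $S(Q,\widetilde H)\subseteq \Sigma^n(Q,\mathbb{Z})=\emptyset$, equivalently $[Q:\widetilde H]<\infty$.

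Finally, I would transfer $FP_n$-finiteness from $G$ to $\widetilde H$ and then bridge back from $[Q:\widetilde H]<\infty$ to $[Q:H]<\infty$. For the first, I would induct on $m-n$: in the base $m=n$ one has $H=G$, which is $FP_n$ by hypothesis, so $\widetilde H = G\cdot[Q,Q]$ is $FP_n$; in the inductive step the short exact sequence $1 \to K \to G \to H \to 1$ with $K\subseteq G_{n+1}\times\cdots\times G_m$, combined with the inductive hypothesis applied to projections of $G$ to $(m-1)$-factor subproducts, gives that $H$ is $FP_n$. The bridging step requires showing $\widetilde H/H\cong [Q,Q]/(H\cap[Q,Q])$ is finite; here one exploits the rigidity of normal subgroups of non-abelian limit groups (every non-trivial finitely generated normal subgroup has finite index) together with the hypothesis $H\cap G_i\neq 1$. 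The main obstacle is this bridging step: without the non-triviality of each $G\cap G_i$ or without the limit-group rigidity, the passage from $[Q:\widetilde H]<\infty$ to $[Q:H]<\infty$ fails, so any proof must put structural weight on precisely those two hypotheses of the theorem.
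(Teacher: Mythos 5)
First, a point of reference: the paper does not prove this statement at all --- it is quoted verbatim from \cite{Desi1} as a preliminary --- so your proposal can only be judged on its own merits. The part of your plan that is sound and standard is the computation $\Sigma^n(G_{i_1}\times\cdots\times G_{i_n},\mathbb{Q})=\emptyset$ from $\Sigma^1(\text{non-abelian limit group})=\emptyset$ together with the Bieri--Geoghegan product formula over $\mathbb{Q}$, and the observation $\Sigma^n(-,\mathbb{Z})\subseteq\Sigma^n(-,\mathbb{Q})$. (Two small caveats even here: the inference ``$[L,L]$ not finitely generated, hence $\Sigma^1(L)=\emptyset$'' is not valid in general --- e.g.\ $BS(1,2)$ has non-finitely-generated commutator subgroup and nonempty $\Sigma^1$ --- though the conclusion for limit groups is the quoted Theorem 3.2; and to feed the product formula you need $\Sigma^1(G_i,\mathbb{Q})=\emptyset$, which requires the inclusion $\Sigma^1(G_i,\mathbb{Q})\subseteq\Sigma^1(G_i)$, the non-obvious direction.)

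The genuine gap is in the passage from $H=p_{1,\ldots,n}(G)$ to $\widetilde H=H[Q,Q]$ and back. Theorem \ref{BRhomotopic} only yields $[Q:\widetilde H]<\infty$ \emph{after} you know $\widetilde H$ is of type $FP_n$, and your justification (``$H$ is $FP_n$, so $\widetilde H=H[Q,Q]$ is $FP_n$'') is a non sequitur: $FP_n$ does not pass to the overgroup $H[Q,Q]$ unless $[\widetilde H:H]<\infty$ --- which is exactly your ``bridging step,'' deferred to the end. So the argument is circular. Worse, the bridging step is where the entire content of the theorem is concentrated, and it cannot be settled by $\Sigma$-theory on $Q$: take $N\trianglelefteq F_2$ with $F_2/N$ infinite, simple and finitely presented, and let $H\leq F_2\times F_2$ be the fibre product over $F_2/N$. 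Then $H$ is a finitely generated subdirect product with $H\cap G_i=N\neq 1$, $\widetilde H=Q$ (so $[Q:\widetilde H]=1$), yet $[\widetilde H:H]=|F_2/N|=\infty$ and $\widetilde H/H$ admits no characters whatsoever; ruling this out requires the structure theory of quotients of limit groups by nontrivial normal subgroups and spectral-sequence arguments, not the rigidity statement you invoke (which applies only to \emph{finitely generated} normal subgroups, and $H\cap G_i$ need not be finitely generated). Finally, the inductive transfer of $FP_n$ from $G$ to $H=p_{1,\ldots,n}(G)$ is also unsupported: a quotient of an $FP_n$ group by $K=G\cap(G_{n+1}\times\cdots\times G_m)$ is $FP_n$ only if $K$ has good finiteness properties, which is itself a subdirect-product finiteness question of the same difficulty. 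In short, you have correctly identified the $\Sigma$-theoretic ingredients, but the reduction of the theorem to the Bieri--Renz criterion is precisely the hard part, and it is missing.
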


\begin{theorem} \cite{Desi1} Let $G$ be a non-abelian limit group. Then $\Sigma^1(G) = \emptyset$.
	\end{theorem}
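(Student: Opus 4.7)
The plan is to derive a contradiction from the assumption that $[\chi]\in\Sigma^1(G)$ for some character $\chi:G\to\R$, when $G$ is a non-abelian limit group.

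First, I would reduce to the case where $\chi$ is discrete. Since $\Sigma^1(G)$ is an open subset of $S(G)$ and the set of discrete characters (those whose image is infinite cyclic in $\R$) is dense in $S(G)$, after replacing $\chi$ by a nearby discrete character representative we may assume that $\chi:G\twoheadrightarrow\Z$ is a surjective homomorphism.

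Next, I would apply the classical Bieri--Neumann--Strebel characterization for discrete characters: $[\chi]\in\Sigma^1(G)$ is equivalent to the existence of a finitely generated subgroup $B\leq\ker(\chi)$ and an element $t\in G$ with $\chi(t)=1$ such that $G=\langle B,t\rangle$ and $tBt^{-1}\subseteq B$, i.e. $G$ is an ascending HNN extension with stable letter $t$ and base $B\subset\ker(\chi)$. Since every finitely generated subgroup of a limit group is itself a limit group, $B$ is a limit group.

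The final step is to extract a contradiction from this ascending HNN structure using the structural properties of limit groups: they are torsion-free, commutative transitive, CSA (maximal abelian subgroups are malnormal), and contain no Baumslag--Solitar subgroup $\mathrm{BS}(1,m)$ with $|m|\geq 2$. If $B$ is abelian, then the $t$-conjugation endomorphism on $B$ is either trivial --- forcing $G=B\times\langle t\rangle$ to be abelian, against our hypothesis --- or nontrivial, in which case a standard analysis produces either a $\mathrm{BS}(1,m)$ subgroup or a violation of CSA, both forbidden in a limit group. If $B$ is non-abelian, then $B\cap tBt^{-1}\supseteq tBt^{-1}$ is a ``large'' non-abelian intersection, and one uses malnormality of centralizers together with commutative transitivity to show that this ascending HNN structure cannot occur in a limit group.

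The main obstacle is the non-abelian case: extracting a sharp contradiction from CSA in an arbitrary ascending HNN presentation requires carefully tracking how conjugation by $t$ interacts with the JSJ-style structure of $B$. An alternative, cleaner route is to induct on the height of $G$ in the Sela / Kharlampovich--Myasnikov hierarchy of $\omega$-residually free towers, bootstrapping from the classical fact (also due to \cite{B-N-S}) that $\Sigma^1=\emptyset$ for non-abelian free groups and for orientable closed surface groups of genus $\geq 2$, which serve as the base case of the induction.
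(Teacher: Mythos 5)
Your first two steps are sound: since $\Sigma^1(G)$ is open and a non-abelian limit group surjects onto $\Z$ (being nontrivial and residually free), it does suffice to rule out discrete characters, and for a discrete $\chi$ the Bieri--Neumann--Strebel characterization correctly reduces the problem to showing that $G$ is not an ascending HNN extension $\langle B,t\rangle$ with finitely generated base $B\leq\ker\chi$ and $tBt^{-1}\subseteq B$. The abelian case also closes: if $B\neq 1$ is abelian and $M$ is the maximal abelian subgroup containing $B$, then $1\neq tBt^{-1}\subseteq M\cap tMt^{-1}$ forces $t\in M$ by malnormality, so $G=\langle B,t\rangle$ is abelian, a contradiction.

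The genuine gap is the non-abelian base case, which is the heart of the theorem, and the properties you invoke do not address it. CSA, commutative transitivity and the absence of $\mathrm{BS}(1,m)$ subgroups are all statements about \emph{abelian} subgroups and their conjugates; they impose no constraint whatsoever on a finitely generated \emph{non-abelian} subgroup $B$ satisfying $tBt^{-1}\subsetneq B$, so the sentence ``one uses malnormality of centralizers together with commutative transitivity to show that this ascending HNN structure cannot occur'' is not an argument but a restatement of what must be proved. What is actually needed here is an analogue of Takahasi's theorem for limit groups (no finitely generated subgroup is conjugate to a proper subgroup of itself), or else the accessibility/JSJ machinery you allude to --- and you explicitly concede that you do not know how to carry this out. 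Your ``alternative, cleaner route'' (induction on the height of $G$ in the tower hierarchy, with free and surface groups as the base) is in fact close to how the cited source \cite{Desi1} proceeds, but as written it is a single sentence: the inductive step is nontrivial, since the splittings occurring in the hierarchy are over abelian subgroups on which an arbitrary character need not vanish, and a limit group of height $h$ is only a subgroup of a tower of height $h$. As it stands, neither route is completed, so the proposal does not constitute a proof.
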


 The  following conjecture  was formulated by Kuckuck in \cite{Benno}.

\medskip
\noindent
{\bf The Virtual Surjection Conjecture} \cite{Benno} {\it Let $G \subseteq G_1 \times \ldots \times G_m$  be a subdirect product of  groups $G_1, \ldots, G_m$ such that $G \cap G_i \not= 1$ for every $1 \leq i \leq m$ and each $G_i$ is of homotopical type $F_n$ for a fixed $n \leq m$. Suppose that $p_{i_1, \ldots, i_n}(G)$ has finite index in $G_{i_1} \times \ldots \times G_{i_n}$ for every $ 1 \leq i_1 < \ldots < i_n \leq m$. Then $G$ is of type $F_n$. }

\medskip The motivation behind 
the Virtual Surjection Conjecture is that it holds for $n = 2$ \cite{BHMS} and this  particular case  was established by Bridson, Howie, Miller and Short as a corollary of the 1-2-3 Theorem. Furthermore the Virtual Surjection Conjecture holds for any $n$ when $G$ contains $G_1' \times \ldots \times G_m'$ \cite{Benno}. A homological version of the Virtual Surjection Conjecture was suggested in \cite{KL1} and proved for $n = 2$.

\begin{theorem} \cite{KL1}  \label{homological1-2-3} Let  $G \subseteq G_1 \times \ldots \times G_m$  be a subdirect product of  groups $G_1, \ldots, G_m$ such that $G \cap G_i \not= 1$ for every $1 \leq i \leq m$ and each $G_i$ is of homological type $FP_2$. Suppose that $p_{i_1, i_2}(G)$ has finite index in $G_{i_1} \times G_{i_2}$ for every $ 1 \leq i_1 < i_2 \leq m$. Then $G$ is of type $FP_2$.
	\end{theorem}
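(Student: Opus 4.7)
I would proceed by induction on $m$. For the base case $m=2$, the hypothesis $p_{1,2}(G)=G$ has finite index in $G_1\times G_2$, combined with the fact that $G_1\times G_2$ is of type $FP_2$ (as a direct product of two $FP_2$ groups) and the closure of $FP_2$ under finite-index subgroups, immediately yields that $G$ is $FP_2$.

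For the inductive step, given $G\leq G_1\times\cdots\times G_m$ satisfying the hypotheses, set $H:=p_{1,\ldots,m-1}(G)$. First verify that $H$ inherits the hypotheses for $m-1$ factors: it is subdirect in $G_1\times\cdots\times G_{m-1}$ by construction; the pairwise projections $p_{i_1,i_2}(H)=p_{i_1,i_2}(G)$ retain finite index; and $H\cap G_i\neq 1$ for $i\leq m-1$ because for such $i$ the subgroup $G\cap G_i$ already lies in the first $m-1$ coordinates and hence injects under $p_{1,\ldots,m-1}$. By the inductive hypothesis, $H$ is of type $FP_2$.

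Now exploit the short exact sequence
$$1\to N \to G \to H \to 1, \qquad N := G\cap G_m \leq G_m,$$
together with the standard $FP_2$-criterion for extensions (extractable from the Lyndon--Hochschild--Serre spectral sequence $E^2_{p,q}=H_p(H,H_q(N,\Z))\Rightarrow H_{p+q}(G,\Z)$): $G$ is of type $FP_2$ provided $H$ is $FP_2$ and $H_1(N,\Z)$ is finitely generated as a $\Z H$-module. The first condition is given by induction; for the second, the plan is to use each pairwise $FP_2$ projection as a building block. For each $i<m$, the projection $p_{i,m}(G)$ has finite index in $G_i\times G_m$ and is therefore of type $FP_2$. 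The extension $1\to p_{i,m}(G)\cap G_m\to p_{i,m}(G)\to G_i\to 1$ with $G_i$ of type $FP_2$ then forces $H_1(p_{i,m}(G)\cap G_m,\Z)$ to be finitely generated as a $\Z G_i$-module, and $N$ sits inside each $p_{i,m}(G)\cap G_m$ via $p_m$.

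\textbf{Main obstacle.} The genuine difficulty lies in assembling these local finite-generation statements (one per coordinate $i<m$) into the single global assertion that $H_1(N,\Z)$ is finitely generated as a $\Z H$-module, not just over each $\Z G_i$ or each pairwise quotient. This is the homological counterpart of the relation-gluing step in the BHMS $1$-$2$-$3$ theorem of \cite{BHMS}, and it is more delicate here because one must glue at the level of $\Z H$-modules rather than with explicit group presentations. My expectation is that the argument proceeds via a Mayer--Vietoris-type spectral sequence (or an equivalent diagram chase) that uses the subdirectness of $G$ to control how $N$ maps into the various $p_{i,m}(G)\cap G_m$; the essential input at each stage is the finite-index hypothesis, which ensures that each pairwise piece is itself $FP_2$ and that the overlaps are well-behaved.
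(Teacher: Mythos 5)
This theorem is quoted from \cite{KL1}; the present paper contains no proof of it, so your proposal has to be measured against the argument in that reference. Your induction skeleton is the right one: the base case, the passage to $H=p_{1,\ldots,m-1}(G)$, and the verification that $H$ inherits the hypotheses all match the actual proof. The fatal problem is the ``standard $FP_2$-criterion for extensions'' on which the rest is built: it is \emph{false} that $H=G/N$ of type $FP_2$ together with $H_1(N,\Z)$ finitely generated over $\Z H$ (and $G$ finitely generated) forces $G$ to be of type $FP_2$. Taking $H=1$ and $N=G$ any finitely generated group that is not $FP_2$ already refutes it; for an example with $H$ infinite, take $G=\Z_2\wr\Z$, so that $N=\bigoplus_{\Z}\Z_2$, $H=\Z$ is $FP_\infty$, and $H_1(N,\Z)\cong\Z_2[t,t^{-1}]$ is a cyclic $\Z H$-module, yet $G$ is a finitely generated metabelian group that is not finitely presented and hence not $FP_2$ by Bieri--Strebel. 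The underlying reason is that $FP_2$ is a statement about the relation module (equivalently about $H_1(G,\prod_I\Z G)$ for arbitrary products), not about $H_*(G,\Z)$: the trivial-coefficient LHS spectral sequence you invoke can at best control $H_2(G,\Z)$, and finite generation of $H_2(G,\Z)$ does not imply $FP_2$. Any correct extension criterion must input relation-module or second-homology data for $N$, which your hypotheses do not supply; your correct observation that each $H_1(p_{i,m}(G)\cap G_m,\Z)$ is finitely generated over $\Z G_i$ lives one homological degree too low to close the argument.

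The proof in \cite{KL1} supplies exactly the missing degree, and not by the Mayer--Vietoris gluing you anticipate. By Goursat, $G$ is precisely the fibre product of $H=p_{1,\ldots,m-1}(G)\twoheadrightarrow Q\twoheadleftarrow G_m$ with $Q\cong G_m/(G\cap G_m)$. The hypothesis of virtual surjection to pairs is then used in full force -- not merely to make each $p_{i,m}(G)$ of type $FP_2$ -- to prove that $Q$ is virtually nilpotent: for each $i<m$ the finite index of $p_{i,m}(G)$ in $G_i\times G_m$ forces commutators of suitable finite-index subgroups into $G\cap G_m$. Hence $Q$ is of type $FP_\infty$, in particular $FP_3$, and one concludes by the homological $1$-$2$-$3$ theorem of \cite{KL1}: a fibre product of two groups of type $FP_2$ over a quotient of type $FP_3$ is of type $FP_2$. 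It is this $FP_3$ control of the common quotient, one degree above the target, that replaces your false extension criterion and resolves the obstacle you isolated.
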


The following conjecture, called the Monoidal Virtual Surjection  Conjecture, was suggested in \cite{KL2} and should be viewed as a monoidal version of the Virtual Surjection Conjecture from \cite{Benno}.

\medskip
{\bf The Monoidal Virtual Surjection  Conjecture}  \cite{KL2} 

{\it Let $n$ and $m$ be positive integers such that $m \geq 2$ and $ 1 \leq n \leq m$. Let $S \leq L_1 \times \ldots \times L_m$ be a  subdirect product of non-abelian limit groups $L_1, \ldots, L_m$ such that $S$ is of type $FP_n$ and finitely presented.
	Then $$[\chi] \in \Sigma^n(S, \mathbb{Q}) = \Sigma^n(S, \mathbb{Z}) = \Sigma^n(S)$$ if and only if  $$p_{j_1, \ldots, j_n}(S_{\chi}) = p_{j_1, \ldots, j_n}(S) \hbox{ for all } 1 \leq j_1 < \ldots  < j_n \leq m,$$ where $p_{j_1, \ldots, j_n}: S \to L_{j_1} \times \ldots \times L_{j_n}$ is  the canonical projection.}

\medskip Note that by Theorem \ref{tra-la-la} the condition that  $S$ is of type $FP_n$ implies that $p_{j_1, \ldots, j_n}(S)$ has finite index in  $ L_{j_1} \times \ldots \times L_{j_n}$.

\medskip

\medskip We state several results that were proved recently by Kochloukova and Lima in \cite{KL2}.
\begin{theorem} \cite{KL2} \label{mono}
1. The forward direction of the Monoidal Virtual Surjection  Conjecture holds i.e. if $[\chi] \in \Sigma^n(S, \mathbb{Q})$ then $p_{j_1, \ldots, j_n}(S_{\chi}) = p_{j_1, \ldots, j_n}(S)$ for all $1 \leq j_1 < \ldots  < j_n \leq m$;

2. The Monoidal Virtual Surjection  Conjecture holds for $n = 1$;

 3. The Monoidal Virtual Surjection  Conjecture holds for $n = m$;

4. If the Virtual Surjection Conjecture  from \cite{Benno} holds then the Monoidal Virtual Surjection  Conjecture holds for all discrete characters $\chi$;

5. The Monoidal Virtual Surjection  Conjecture holds for the Bieri-Stallings groups $S = G_m$ and the  classical embedding $S = G_m \subseteq L_1 \times \ldots \times L_m$ with $L_i$ free of rank 2 for $ 1 \leq i \leq m$  in the following two cases : 

a)  if $n \leq m-2$;

b) if $n = m-1$ and $\chi$ is a discrete character.
\end{theorem}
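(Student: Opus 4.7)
The overall strategy is to combine the Bieri–Renz criterion (Theorem \ref{BRhomotopic}) with the direct product formula over a field (Theorem \ref{teo-conjec.prod.dir.corpo}), exploiting the crucial input that $\Sigma^1(L_i) = \emptyset$ for each non-abelian limit group $L_i$. The first reduction is to rephrase the condition $p_J(S_\chi) = p_J(S)$ for $J = \{j_1, \ldots, j_n\}$: it is equivalent to saying that every $s \in S$ admits a correction $k \in K_J := \ker(p_J) \cap S$ with $\chi(sk) \geq 0$, i.e., that $\chi$ is either identically zero or unbounded above on $K_J$.

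For part (1), the forward direction, I would argue by contraposition. Suppose $p_J(S_\chi) \subsetneq p_J(S)$ for some $|J| = n$; then $\chi$ is bounded above on $K_J$, so up to a bounded error $\chi$ factors through $p_J$. Using Theorem \ref{tra-la-la} to guarantee $p_J(S)$ has finite index in $L_{j_1} \times \ldots \times L_{j_n}$ and then Theorem \ref{teo-Sigma.de.gr.=.Sigma.de.subgr.}, one transfers the $\Sigma^n(-,\Q)$ question from $S$ to the product $L_{j_1} \times \ldots \times L_{j_n}$ via a character $\mu$ extending the factored version of $\chi$. Since each $\Sigma^p(L_{j_k},\Q)^c = S(L_{j_k})$ for $p \geq 1$, the join formula immediately puts $[\mu]$ in the complement of $\Sigma^n(\prod L_{j_k},\Q)$, giving the desired contradiction.

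Parts (2) and (3) are the extreme dimensions. For $n=1$ the forward direction is (1), and the converse uses the Cayley-graph description of $\Sigma^1$: the hypothesis $p_j(S_\chi) = p_j(S)$ for every $j$ lets one connect any vertex of $\Gamma_\chi$ to the identity by chaining coordinate-wise corrections, thanks to $\Sigma^1(G) = \Sigma^1(G,\Z)$ and Lemma \ref{quotient-sigma1}. For $n=m$, Theorem \ref{tra-la-la} forces $S$ to be of finite index in $L_1 \times \ldots \times L_m$, and one concludes directly with Theorems \ref{teo-Sigma.de.gr.=.Sigma.de.subgr.} and \ref{teo-conjec.prod.dir.corpo}. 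For part (4), one applies the Virtual Surjection Conjecture to $N = \ker(\chi)$ where $\chi$ is discrete: the virtual surjection hypothesis on $S_\chi$ passes to $N$, so VSC yields $N$ of type $F_n$, and Theorem \ref{BRhomotopic} converts this into $[\chi] \in \Sigma^n(S)$. Part (5) for $S = G_m$ exploits the explicit Morse-theoretic structure of the Bieri–Stallings group inside $F_2^m = L_1 \times \ldots \times L_m$ from \cite{B-B}, together with Theorem \ref{new111} pinpointing where characters of $F_2^m$ lie in the $\Sigma^{n-1}\setminus\Sigma^n$ layers; in the range $n \leq m-2$ there is enough dimensional slack to produce the required finitely generated resolutions by hand.

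The main obstacle is part (1). The delicate point is that when $p_J(S_\chi) \subsetneq p_J(S)$, the character $\chi$ need not vanish on $K_J$ but only be bounded, and $\chi$ is defined on $S$ rather than on the ambient product. One must pass to a well-chosen quotient of $S$ that kills the bounded part of $\chi|_{K_J}$ and simultaneously keeps track of the finite-index image $p_J(S)$ inside $L_{j_1}\times\ldots\times L_{j_n}$, so that the direct product formula over $\Q$ can be legitimately invoked. Managing this interplay—between subdirect (rather than full) products, restriction to finite-index subgroups, and the fact that Theorem \ref{teo-conjec.prod.dir.corpo} only holds over a field, which is why the statement is phrased over $\Q$—is where the real work lies.
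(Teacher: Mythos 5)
The first thing to say is that the paper does not prove Theorem \ref{mono} at all: it appears in the preliminary Section \ref{prel} as a list of results quoted from Kochloukova--Lima \cite{KL2}, so there is no internal proof to compare your argument against. The only piece of this circle of ideas that the present paper actually establishes is the case left open by part 5, namely $n=m-1$ for arbitrary characters (Corollary \ref{ThmJ}), and that is deduced from Theorem \ref{ThmI} rather than by the kind of direct argument you sketch. So your proposal should be judged as an attempted reconstruction of \cite{KL2}, not of anything in this paper.

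On its own terms the sketch has one genuine gap and one place where the hard work is waved away. In part (1), after noting that $p_J(S_\chi)\subsetneq p_J(S)$ forces $\chi$ to vanish on $\ker(p_J)\cap S$ and hence to factor through $p_J(S)$, you ``transfer the $\Sigma^n(-,\Q)$ question from $S$ to the product.'' The only tool available for pushing $\Sigma$-membership forward along a quotient is Lemma \ref{quotient-sigma1}, which is specific to $n=1$; for $n\ge 2$ the analogous statement is not automatic (applying $\Q (p_J(S))_{\bar\chi}\otimes_{\Q S_\chi}-$ to a free resolution of $\Q$ introduces Tor obstructions in positive degrees), and Theorem \ref{teo-Sigma.de.gr.=.Sigma.de.subgr.} only handles finite-index \emph{subgroups}, not quotients. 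This quotient step is exactly where the substance of the forward direction lies, and without it the contradiction with Theorem \ref{teo-conjec.prod.dir.corpo} is not reached. A smaller slip: $p_J(S_\chi)=p_J(S)$ is equivalent to $\chi$ being \emph{nonzero} on $\ker(p_J)\cap S$ (as the paper records in Section \ref{ap1}); your alternative ``identically zero'' branch actually gives a proper inclusion, since a nonzero character always takes negative values on some coset. Finally, part 5a is the technical core of \cite{KL2}, and ``enough dimensional slack to produce the required finitely generated resolutions by hand'' is a placeholder, not an argument; by contrast, your outlines of parts (2), (3) and (4) are essentially the standard reductions and look sound.
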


\section{Invariants of subnormal subgroups and wreath products} \label{subnormalwreath}

In this section we prove Theorem \ref{subnormal}. For this we consider a characterization of the homological $\Sigma$-invariants in terms of \emph{Novikov rings}.

Let $G$ be a group and $\chi: G \to \R$ a character. The Novikov ring of $G$ with respect to $\chi$ is the ring $\widehat{\Z G}_{\chi}$ of formal sums $\lambda= \sum_{g \in G} a_g g$, with $a_g \in \Z$, such that the set 
\[S_{\lambda,r} = \{g \in G \mid a_g \neq 0, \chi(g)<r\}\]
is finite for all $r \in \R$. Addition and multiplication in $\widehat{\Z G}_{\chi}$ extend the operations of the subring $\Z G \subseteq \widehat{\Z G}_{\chi}$.

\begin{theorem} \cite{Bieri}, \cite{Si}
Let $G$ be a group of type $FP_n$ and $\chi: G \to \R$ a character. Then $[\chi] \in \Sigma^n(G,\Z)$ if and only if $H_k(G, \widehat{\Z G}_{\chi})=0$ for all 
$k=0,1, \ldots, n$. 
\end{theorem}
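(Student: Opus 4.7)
The plan is to fix a free $\Z G$-resolution $F_\bullet \to \Z$ of the trivial module that is finitely generated in degrees $\leq n$ (which exists by the $FP_n$ hypothesis on $G$) and to reformulate each side of the desired equivalence in terms of partial chain contractions of $F_\bullet$: the $\Sigma$-invariant side as an approximate contraction with coefficients in $\Z G$, and the Novikov side as an exact contraction with coefficients in $\widehat{\Z G}_{\chi}$.

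On the Novikov side, $H_k(G, \widehat{\Z G}_{\chi}) = \mathrm{Tor}^{\Z G}_k(\Z, \widehat{\Z G}_{\chi})$ is computed by the tensored complex $\widehat{\Z G}_{\chi} \otimes_{\Z G} F_\bullet$, a complex of finite-rank free $\widehat{\Z G}_{\chi}$-modules. By standard homological algebra its vanishing through degree $n$ is equivalent to the existence of a $\widehat{\Z G}_{\chi}$-linear family $H_k$ satisfying $\partial_{k+1} H_k + H_{k-1} \partial_k = \mathrm{id}$ for $k \leq n$. On the $\Sigma$-invariant side, by a Bieri and Renz-style valuation-theoretic criterion, $[\chi] \in \Sigma^n(G, \Z)$ is equivalent to the existence of $\Z G$-linear maps $h_k : F_k \to F_{k+1}$ for $k < n$ whose defect $r_k := \mathrm{id} - \partial_{k+1} h_k - h_{k-1} \partial_k$ strictly raises the canonical $\chi$-valuation on $F_k$: there is a uniform constant $D > 0$ with $v(r_k(x)) \geq v(x) + D$ for all nonzero $x \in F_k$, where $v(\sum_{g, i} a_{g, i}\, g e_i) = \min\{\chi(g) : a_{g, i} \neq 0\}$ is the $\chi$-valuation induced on $F_k \cong (\Z G)^{n_k}$ from the fixed basis.

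The forward direction is then formal: since each iterate $r_k^m$ shifts $v$ up by at least $mD$, the geometric series $\sum_{m \geq 0} r_k^m$ converges entrywise in $\widehat{\Z G}_{\chi}$ — this convergence is exactly what the Novikov completion is designed to absorb — and composing $h$ with this sum yields a genuine $\widehat{\Z G}_{\chi}$-linear chain contraction, hence Novikov vanishing through degree $n$.

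The reverse direction, which is Sikorav's approximation theorem, is the main obstacle. From a Novikov chain contraction $H_k$ with entries in $\widehat{\Z G}_{\chi}$ one builds a $\Z G$-valued $h_k$ by truncation: discarding the summands of each entry whose $\chi$-value exceeds a high threshold $T_k$ leaves only finitely many terms and so produces a genuine $\Z G$-linear operator, while the resulting contraction defect is supported on $\chi \geq T_k$. The delicate step is to coordinate the thresholds $T_0, \ldots, T_{n-1}$ inductively across degrees — an error introduced by truncation in one degree feeds back into the defect one degree higher and must be compensated by a stricter threshold there — so that the final defect operators $r_k$ simultaneously satisfy $v(r_k(x)) \geq v(x) + D$ for some uniform $D > 0$. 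Controlling this propagation of truncation errors across degrees is the technical heart of the argument.
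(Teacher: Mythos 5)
This statement is quoted in the paper from Bieri \cite{Bieri} and Sikorav \cite{Si} without proof, so your proposal can only be measured against the standard argument in those references --- and that is precisely the route you take: reformulate membership in $\Sigma^n(G,\Z)$ as an approximate partial chain contraction of a free resolution with finitely generated $n$-skeleton, reformulate Novikov vanishing as an exact contraction over $\widehat{\Z G}_{\chi}$, pass from the former to the latter by a geometric series, and come back by truncation. Your forward direction is complete: the ``valuation-raising defect'' formulation is exactly conditions (ii)--(iii) of the Bieri--Renz criterion (Theorem \ref{BR-compact} applied to the singleton $\{[\chi]\}$, together with the comparison theorem to produce some chain homotopy $h$ from $\varphi=\mathrm{id}-\partial h-h\partial$ to the identity); since $\varphi$ is a chain map raising $v_\chi$ by a uniform $D>0$, the series $\Phi=\sum_{m\ge 0}\varphi^m$ converges entrywise in $\widehat{\Z G}_{\chi}$, commutes with $\partial$, and $h\Phi$ is an honest contraction through degree $n$, killing $H_k(G,\widehat{\Z G}_{\chi})$ for $k\le n$.

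The reverse direction is where your write-up stops short of being a proof: you name the coordination of truncation thresholds as ``the technical heart'' and then do not carry it out. As written that is a genuine gap, although the missing step is less delicate than you suggest and there is no real feedback between degrees. Write $H_k=h_k+\tau_k$, where $h_k$ keeps the finitely many terms of each matrix entry with $\chi$-value below a single common threshold $T$ and $\tau_k$ is the tail, all of whose support lies in $\chi\ge T$. Then $r_k:=\mathrm{id}-\partial_{k+1}h_k-h_{k-1}\partial_k=\partial\,\tau_k+\tau_{k-1}\partial$ is simultaneously a $\Z G$-matrix (from the left-hand expression) and, from the right-hand expression, has all entries of valuation at least $T-c$, where $c$ bounds the valuation shifts of the finitely many boundary matrices and of the finitely many basis elements of $X(n)$. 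Choosing $T$ large therefore forces $v_\chi(r_k(x))\ge v_\chi(x)+D$ on basis elements, hence on all of $F_k$ by $\Z G$-equivariance, and the Bieri--Renz criterion gives $[\chi]\in\Sigma^n(G,\Z)$; no inductive tightening of thresholds degree by degree is needed. You should also record the (routine) preliminary step that vanishing of $H_k(G,\widehat{\Z G}_{\chi})$ for $k\le n$ produces the exact contraction $H_k$ in the first place, by the usual degree-by-degree lifting through the free modules of the tensored complex. With the truncation estimate actually written out, your argument is correct and coincides with the proof in \cite{Si} and \cite{Bieri}.
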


We will employ the following result obtained by Sch\"utz.

\begin{lemma} \cite[Corollary~2.5]{Sc}  \label{Schu}
Let $G$ be a group of type $FP_n$ and $\chi: G \to \R$ a  character. If $[\chi]\in \Sigma^n(G,\Z)$, then $H_k(G, M)=0$ for all $0 \leq k \leq n$ and for all $\widehat{\Z G}_{\chi}$-modules $M$.   
\end{lemma}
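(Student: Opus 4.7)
The plan is to reduce the statement to the Sikorav criterion (already recalled as the theorem just above Lemma \ref{Schu}) by a standard dimension-shifting / splitting argument applied to a partial projective resolution of $\Z$ over $\Z G$. First I would pick a projective resolution
\[
\cdots \to P_{n+1} \to P_n \to P_{n-1} \to \cdots \to P_0 \to \Z \to 0
\]
of the trivial $\Z G$-module $\Z$, in which $P_i$ is finitely generated projective over $\Z G$ for $i \leq n$; this is available because $G$ is of type $FP_n$. Then I would base-change, setting $Q_i = P_i \otimes_{\Z G} \widehat{\Z G}_{\chi}$, so each $Q_i$ for $i \leq n$ is a finitely generated projective $\widehat{\Z G}_{\chi}$-module, and $H_k$ of the complex $Q_\bullet$ computes $H_k(G, \widehat{\Z G}_{\chi})$.

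By Sikorav's theorem (the theorem just preceding the lemma in the excerpt), the hypothesis $[\chi] \in \Sigma^n(G,\Z)$ gives $H_k(G,\widehat{\Z G}_{\chi}) = 0$ for $0 \leq k \leq n$, so the tail of the complex
\[
Q_{n+1} \to Q_n \to Q_{n-1} \to \cdots \to Q_0 \to 0
\]
is exact. Writing $Z_i = \ker(Q_i \to Q_{i-1})$ for $i \geq 1$ and $Z_0 = Q_0$, this exactness breaks up into short exact sequences of $\widehat{\Z G}_{\chi}$-modules
\[
0 \to Z_i \to Q_i \to Z_{i-1} \to 0, \qquad i = 1, 2, \ldots, n.
\]
An induction on $i$ shows that every $Z_i$ with $0 \leq i \leq n$ is projective over $\widehat{\Z G}_{\chi}$: indeed $Z_0 = Q_0$ is projective by construction, and once $Z_{i-1}$ is known to be projective the displayed sequence splits, so $Z_i$ is a direct summand of the projective module $Q_i$.

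Now I would tensor everything with an arbitrary $\widehat{\Z G}_{\chi}$-module $M$ over $\widehat{\Z G}_{\chi}$. Because each of the short exact sequences above is split, applying $-\otimes_{\widehat{\Z G}_{\chi}} M$ preserves exactness, yielding exactness of the complex $Q_n\otimes M \to \cdots \to Q_0 \otimes M \to 0$. For the remaining step in degree $n$, I would use that the surjection $Q_{n+1} \twoheadrightarrow Z_n$ coming from exactness at $Q_n$ tensors (right-exactly) to a surjection $Q_{n+1}\otimes M \twoheadrightarrow Z_n \otimes M$, and that the splitting of the sequence $0 \to Z_n \otimes M \to Q_n \otimes M \to Z_{n-1} \otimes M \to 0$ identifies $Z_n \otimes M$ with $\ker(Q_n\otimes M \to Q_{n-1}\otimes M)$. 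The standard identification $Q_i \otimes_{\widehat{\Z G}_{\chi}} M = P_i \otimes_{\Z G} M$ then shows that the complex $P_n \otimes_{\Z G} M \to \cdots \to P_0\otimes_{\Z G} M \to 0$ is exact, i.e.\ $H_k(G,M) = 0$ for $0 \leq k \leq n$.

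The main obstacle I expect is purely bookkeeping: one must ensure that the projectivity of each $Z_i$ and the splittings propagate exactly up to the critical index $i=n$, and that no hypothesis on the finite generation of $P_{n+1}$ is secretly needed. The key observation that dissolves this worry is that only the \emph{surjectivity} of $Q_{n+1} \to Z_n$, guaranteed by the vanishing of $H_n(G,\widehat{\Z G}_{\chi})$, is used for the top-degree step, while the splitting argument runs entirely inside the finitely generated projective range $i \leq n$.
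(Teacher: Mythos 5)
Your argument is correct. Note that the paper does not prove this lemma at all: it is quoted verbatim from Sch\"utz \cite[Corollary~2.5]{Sc}, so there is no internal proof to compare against; your write-up is essentially the standard argument (and in substance the one in the cited source). The key points all check out: by the Sikorav--Bieri criterion the base-changed complex $Q_{n+1}\to\cdots\to Q_0\to 0$ is exact in degrees $0,\dots,n$; the syzygies $Z_i$ for $i\le n$ are then projective over $\widehat{\Z G}_{\chi}$ by the inductive splitting (finite generation of the $P_i$ is in fact never needed for this step, only for invoking the criterion itself, as you observe); and the split sequences together with right-exactness of $-\otimes_{\widehat{\Z G}_{\chi}}M$ at the top degree give exactness of $P_\bullet\otimes_{\Z G}M$ in degrees $\le n$ via $Q_i\otimes_{\widehat{\Z G}_{\chi}}M\cong P_i\otimes_{\Z G}M$. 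The only bookkeeping worth making explicit in a final version is the left/right module convention (one resolves the trivial module on the side opposite to $M$, using the Novikov ring as a $(\Z G,\widehat{\Z G}_{\chi})$-bimodule), but this is routine and does not affect the argument.
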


 {\bf Proof of Theorem \ref{subnormal}}
 Suppose that $N \leq G$ is subnormal and let $N=N_0 \leq N_1 \leq \dots \leq N_r = G$ with $N_i \trianglelefteq N_{i+1}$ for all $i$. 
  Suppose that $\chi: G \to \R$ is a  character such that $[\chi |_N] \in \Sigma^n(N,\Z)$. We will show by induction on $i$ that $H_j(N_i, \widehat{\Z G}_{\chi}) = 0$ for $ 0 \leq j \leq n$. 
  The case $ i= 0$ is a particular case of Lemma \ref{Schu}. 
  
  For the induction we assume that $H_j(N_i, \widehat{\Z G}_{\chi}) = 0$  for $ 0 \leq j \leq n$ and aim to prove that $H_j(N_{i+1}, \widehat{\Z G}_{\chi}) = 0$ for $ 0 \leq j \leq n$ if $i \not= r$.
  Consider the LHS spectral sequence
 \[ E_{p,q}^2 = H_p(N_{i+1}/N_i, H_q(N_i, \widehat{\Z G}_{\chi})) \Rightarrow H_{p+q}(N_{i+1}, \widehat{\Z G}_{\chi}).\] 
 Lemma \ref{Schu} implies that $E_{p,q}^2=0$ whenever $q \leq n$. By the convergence of the spectral sequence, $H_j(N_{i+1}, \widehat{\Z G}_{\chi})=0$ for all $j \leq n$. This completes the induction. 
  
  Finally since $N_r = G$ we deduce that $H_j(G,\widehat{\Z G}_{\chi})=0$ for all $0 \leq j \leq n$, that is, $[\chi] \in \Sigma^n(G,\Z)$.
 
 \vspace{3mm}
 
 {\bf Proof of Corollary \ref{wreath}} Let $\Gamma= H \wr_X G$ be a wreath product of type $FP_n$ and let $\chi:\Gamma \to \R$ be a  character with $|T_{\chi}| \geq n+1$. Let $T_1$ be any subset of $T_{\chi}$ containing exactly $n+1$ elements and let $N \leq \Gamma$ be the subgroup generated by the copies $H_x$ of $H$ such that $x \in T_1$. Clearly $N = \oplus_{x \in T_1} H_x \simeq \prod_{j=1}^{n+1} H$. Notice that $N$ is of type $FP_n$, since the direct product is finite and, under the conditions of the corollary, $H$ is of type $FP_n$ by \cite[Theorem~A]{BCK}. By Theorem \ref{teo-conjec.prod.dir.corpo2} we conclude that $[\chi |_N] \in \Sigma^n(N,\Z)$. Furthermore, $N$ is subnormal in $\Gamma$ via $N \leq \oplus_{x \in X}H_x \leq \Gamma$, so $[\chi] \in \Sigma^n(\Gamma, \Z)$ by Theorem \ref{subnormal}.

\section{Applications of Theorem \ref{ThmI} for the Bieri-Stallings groups $G_m$ : proof of  Corollary  \ref{ThmJ}}  \label{ap1}

Let $L_i$ be the free group with a free basis $\{ x_i, y_i \} $ and 
$$
G_m \leq H = L_1 \times L_2 \times \ldots \times L_m
$$
be the kernel of the character $H \to \R$ that sends each $x_i$ and $y_i$ to 1.
 Consider the Monoidal Virtual Surjection Conjecture for $S = G_m$ of type $FP_n$.
Note that since $G_m$ is $FP_{m-1}$ but not  $FP_m$ we  have that $n \leq m-1$. The case $n \leq m-2$ was proved in \cite{KL2} i.e. we know that  the  Monoidal Virtual Surjection Conjecture  holds for $S = G_m$ and $ n \leq m-2$. Thus we can assume from now on that $n = m-1$. Furthermore as the forward direction of the  Monoidal Virtual Surjection Conjecture  holds (see Theorem \ref{mono}, part 1), it remains to prove the backward direction, i.e. for $S = G_m$ if $p_{j_1, \ldots, j_{m-1}}(S) = p_{j_1, \ldots, j_{m-1}}(S_{\chi})$ for every $ 1 \leq j_1 < \ldots < j_{m-1} \leq m$ then $[\chi] \in \Sigma^{m-1}(S)$. Note that this is really sufficient to complete the proof of the Monoidal Virtual Surjection Conjecture  since in general we have $\Sigma^{m-1}(S) \subseteq \Sigma^{m-1}(S, \mathbb{Z}) \subseteq \Sigma^{m-1}(S, \mathbb{Q})$.

Let $\mu : H \to \R$ be a character and $\mu_i$ be the restriction of $\mu$ on  $L_i$. By Theorem \ref{new111}
$[\mu] \in \Sigma^{m-1}(H)  $ if and only if  $\mu_i \not= 0$ for all $ 1 \leq i \leq m$. Note that $[G_m, G_m] = [H,H]$, hence we can apply Theorem \ref{ThmI} i.e. $[\chi] \in \Sigma^{m-1}(S)$ if and only if for every character  $\mu : H \to \R$ that extends $\chi$ we have $[\mu] \in \Sigma^{m-1}(H)$.
Thus to finish the proof of the fact that $[\chi] \in \Sigma^{m-1}(S)$ for $S = G_m$ it remains to show that  $\mu_i \not= 0$ for all $ 1 \leq i \leq m$.

By  \cite{KL2} 
$p_{j_1, \ldots, j_{m-1}}(S_{\chi}) = p_{j_1, \ldots, j_{m-1}}(S)$  is equivalent to $\chi ( Ker (p_{j_1, \ldots, j_{m-1}})) \not= 0.$ 
Note that for $S = G_m$ we have $$Ker (p_{j_1, \ldots, j_{m-1}}) = G_m \cap L_i,$$ where $i = \{ 1, \ldots, m \} \setminus \{j_1, \ldots, j_{m-1}\}$. Thus $\chi(G_m \cap L_i) \not= 0$ and hence $\mu_i \not= 0$ for $ 1 \leq i \leq m$.
 This completes the proof of  Corollary \ref{ThmJ}.

\section{Applications of Theorem \ref{ThmI} to Bestvina-Brady groups : proof of Corollary \ref{cor-june} and  Corollary \ref{polyhedron}}  \label{ap2}

Let $A_{\Gamma}$ be the right-angled Artin group associated to a finite graph $\Gamma$.
Let $BB_{\Gamma}$ be the Bestvina-Brady group associated to  $\Gamma$ i.e. $BB_{\Gamma}$ is the kernel of the homomorphism $\varphi: A_{\Gamma} \to \Z$ that sends each of the usual generators of $A_{\Gamma}$ (which are identified with the vertices of $\Gamma$) to $1 \in \Z$. By \cite{B-B} we know that $BB_{\Gamma}$ is finitely generated if and only if $\Gamma$ is connected.

\begin{lemma} \label{june14}
 If $\Gamma$ is connected, then $[A_{\Gamma}, A_{\Gamma}] = [BB_{\Gamma}, BB_{\Gamma}]$.
\end{lemma}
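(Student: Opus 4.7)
The easy inclusion $[BB_\Gamma, BB_\Gamma] \subseteq [A_\Gamma, A_\Gamma]$ is immediate. For the reverse inclusion, I set $N := [BB_\Gamma, BB_\Gamma]$ and note that $N$ is normal in $A_\Gamma$: because $BB_\Gamma$ is normal in $A_\Gamma$ (as the kernel of $\varphi$), conjugation by any element of $A_\Gamma$ sends commutators of elements of $BB_\Gamma$ to commutators of elements of $BB_\Gamma$. Hence the quotient $Q := A_\Gamma/N$ is a group, and it suffices to show that $Q$ is abelian, for then $[A_\Gamma, A_\Gamma] \subseteq N$.

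Fix a vertex $v_0 \in V(\Gamma)$. The short exact sequence
\[ 1 \to BB_\Gamma^{ab} \to Q \to \Z \to 1 \]
splits via the image of $v_0$ in $Q$. Moreover, since every vertex $v$ satisfies $vv_0^{-1} \in BB_\Gamma$, the image $\bar v$ differs from $\bar v_0$ in $Q$ by an element of the abelian subgroup $BB_\Gamma^{ab}$. Thus $Q$ is generated by $BB_\Gamma^{ab}$ together with $\bar v_0$, and $Q$ is abelian precisely when $v_0$ acts trivially by conjugation on $BB_\Gamma^{ab}$. I will deduce this triviality from the identification of $BB_\Gamma^{ab}$, as an $A_\Gamma$-module, with $K := \ker(A_\Gamma^{ab} \to \Z)$: since the natural map $\pi : BB_\Gamma^{ab} \to A_\Gamma^{ab}$ is $A_\Gamma$-equivariant for the conjugation actions, and $A_\Gamma$ acts trivially on its own abelianization, once $\pi$ is shown to be an isomorphism onto $K$, the $A_\Gamma$-action on $BB_\Gamma^{ab}$ must be trivial.

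The key step is therefore to exhibit $\pi : BB_\Gamma^{ab} \to K$ as an isomorphism. Since $K$ is free abelian on the basis $\{ e_w - e_{v_0} : w \in V(\Gamma)\setminus\{v_0\}\}$, I define $\sigma : K \to BB_\Gamma^{ab}$ on this basis by $\sigma(e_w - e_{v_0}) := \overline{wv_0^{-1}}$. The composition $\pi \circ \sigma$ equals the identity on $K$ by direct check on the basis. For $\sigma \circ \pi$, it suffices to verify the identity on a generating set of $BB_\Gamma^{ab}$; here I invoke the fact from \cite{B-B} that, when $\Gamma$ is connected, $BB_\Gamma$ is generated by the edge elements $\{ ux^{-1} : \{u,x\} \in E(\Gamma)\}$. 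On each such generator, the routine calculation
\[ \sigma\circ\pi(\overline{ux^{-1}}) = \sigma(e_u - e_x) = \overline{uv_0^{-1}} - \overline{xv_0^{-1}} = \overline{ux^{-1}} \]
completes the identification, from which the conclusion follows as explained.

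The only place where connectedness of $\Gamma$ is really used is the generation of $BB_\Gamma$ by edge elements, and this is where I expect the main conceptual input to lie. If one prefers not to cite that generation result, an alternative is to apply Reidemeister--Schreier with coset representatives $\{v_0^k\}_{k\in\Z}$ and to observe that the resulting relations among the Schreier generators $v_0^k w v_0^{-k-1}$ collapse, via connectedness of $\Gamma$, to the statement that $v_0$-conjugation acts trivially on $BB_\Gamma^{ab}$, yielding the same conclusion directly.
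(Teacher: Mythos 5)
Your argument is correct, but it takes a genuinely different route from the paper. The paper argues directly inside $A_{\Gamma}$: it reduces to showing $[x,y]\in[BB_{\Gamma},BB_{\Gamma}]$ for all pairs of vertices $x,y$, and proves this by induction on the distance from $x$ to $y$ in $\Gamma$, using the explicit identity $[x,y]=[xy^{-1},t^{-1}y]^{y}\cdot[x,t]^{t^{-1}y}$ for $t$ a neighbour of $y$ closer to $x$ (both $xy^{-1}$ and $t^{-1}y$ lie in $BB_{\Gamma}$, and $[BB_{\Gamma},BB_{\Gamma}]$ is normal in $A_{\Gamma}$). You instead pass to the quotient $Q=A_{\Gamma}/[BB_{\Gamma},BB_{\Gamma}]$, identify $BB_{\Gamma}^{ab}$ with $\ker(A_{\Gamma}^{ab}\to\Z)$ via the mutually inverse maps $\pi$ and $\sigma$, and conclude that the conjugation action on $BB_{\Gamma}^{ab}$ is trivial; the computations ($\pi\circ\sigma=\mathrm{id}$ on the basis $e_w-e_{v_0}$, and $\sigma\circ\pi(\overline{ux^{-1}})=\overline{uv_0^{-1}}-\overline{xv_0^{-1}}=\overline{ux^{-1}}$ on edge generators) are all correct, as is the equivariance argument. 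The trade-off is that your proof imports a nontrivial external fact, namely that for connected $\Gamma$ the group $BB_{\Gamma}$ is \emph{actually} (not just normally) generated by the edge elements $uv^{-1}$; this is true but is really due to Dicks--Leary (or can be extracted from the Morse-theoretic connectivity argument in \cite{B-B}), and it cannot be weakened to normal generation without making your verification of $\sigma\circ\pi=\mathrm{id}$ circular. In exchange you obtain more structural information than the lemma asks for: the identification $BB_{\Gamma}^{ab}\cong\Z^{|V(\Gamma)|-1}$ with trivial $A_{\Gamma}$-action. The paper's induction is more elementary and entirely self-contained, which is presumably why it was chosen.
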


\begin{proof}
 It suffices to show that for any two vertices $x,y \in V(\Gamma)$, the commutator $[x,y]$ lies in $[BB_{\Gamma}, BB_{\Gamma}]$. We prove this by induction on the length $n$ of the shortest path joining $x$ and $y$ in $\Gamma$. If $n=1$ there is nothing to do. Suppose that $n>1$ and let $t \in V(\Gamma)$ be a vertex which is connected to $y$ by an edge and which is connected to $x$ through a path of length $n-1$. By induction we can assume that $[x,t] \in [BB_{\Gamma}, BB_{\Gamma}]$. Keeping in mind that $y$ and $t$ commute in $A_{\Gamma}$, it is not hard to verify that
 \[ [x,y] = [xy^{-1}, t^{-1}y]^y . [x,t]^{t^{-1}y},\]
 thus $[x,y] \in [BB_{\Gamma}, BB_{\Gamma}]$. 
 \end{proof}

\subsection{Proof of  Corollary \ref{cor-june}} It follows from Lemma \ref{june14} that Theorem \ref{ThmI} applies to any non-zero homomorphism $\chi: BB_{\Gamma} \to \R$. Thus $[\chi]  \in \Sigma^m(BB_{\Gamma},\Z)$ if and only  if  $[\mu] \in \Sigma^m( A_{\Gamma},\Z)$ for any extension $\mu: A_{\Gamma} \to \R$, and similarly for the homotopical invariants. In this sense, the invariants of $A_{\Gamma}$ determine the invariants of $BB_{\Gamma}$.

 \subsection{Proof of Corollary \ref{polyhedron}}
Let $\Gamma$ be a connected graph with set of vertices $V(\Gamma)$.  Let $\chi : A_{\Gamma} \to \mathbb{R}$ be a  character and $L_{\chi}$ be the subgraph of $\Gamma$ spanned  by the vertices of $\Gamma$ with non-zero $\chi$-value. By
 \cite{MV}  $[\chi] \in \Sigma^1(A_{\Gamma})$ if and only if $L_{\chi}$ is connected  and dominant in $\Gamma$ i.e. every vertex in $V(\Gamma) \setminus V(L_{\chi})$ is linked by an edge from $\Gamma$  with at least  one vertex from $L_{\chi}$. This description easily implies that $\Sigma^1(A_{\Gamma})^c$ is a rationally defined spherical polyhedron. Actually there is a conjecture suggested in \cite{Kisnney-K} that predicts the structure of the $\Sigma^1$ for general Artin groups. It was recently shown in \cite{Koch-even} that for the  Artin groups that satisfy this conjecture $\Sigma^1(G)^c$ is a rationally defined polyhedron. In particular this holds for $G = A_{\Gamma}$. 

By Lemma \ref{june14} $S(BB_{\Gamma})$ is a subsphere of $S(A_{\Gamma})$. The fact that $\Sigma^1(A_{\Gamma})^c$ is a rationally defined spherical polyhedron together with Theorem \ref{ThmI} implies that $\Sigma^1(BB_{\Gamma})^c$ is a rationally defined spherical polyhedron.

\section{The proof of the homological part of Theorem \ref{ThmI}}  \label{pf1}

In this section we will prove the homological part of Theorem \ref{ThmI}. The proof is long and technical and subdivided in several subsections. We use the techniques developed in \cite{B-Renz} but at several points subtle modifications are required.

\subsection{Preliminaries from the Bieri-Renz paper \cite{B-Renz}}

First we recall some notation from \cite{B-Renz}.
Assume that  \begin{equation} \label{free}
 \ldots ~ \mapnew{\partial_{i+1}} ~ F_i ~ \mapnew{\partial_i} ~ F_{i-1} ~ \mapnew{\partial_{i-1}} ~ \ldots \to F_0 \to A \to 0\end{equation}
is an admissible free resolution (over $\Z G$) with
finitely generated $n$-skeleton i.e. for every $0 \leq i \leq n$
the free $\Z G$-module $F_i$ is endowed with a basis $X_i$  such that for each $x_i \in X_i$ we have $\partial_i (x_i) \not= 0$ and
$$
X(n) = \cup_{0 \leq i \leq n} X_i \hbox{ is finite.}
$$ Set $$X = \cup_{i \geq 0} X_i.$$ We write ${\bf F}$ for the complex $\ldots \to F_i \to F_{i-1} \to \ldots \to F_0 \to 0$ and write ${\bf F} \to A$ for the free resolution (\ref{free}). 
We think of ${\bf F}$ as $\oplus_{i \geq 0} F_i$.

Let $v  : X \to \R$ 
be a set-theoretic map and $$\nu : G \to \R$$ be a character. We extend it to a valuation (in the sense of Bieri-Renz in \cite{B-Renz}) $$v = v_{\nu}: {\bf F} \to \R \cup \{ \infty \},$$
given by
$$ v(0) = \infty, \ v(g x) =  \nu(g) + v(x) \hbox{ for } g \in G, x \in X$$and
$$v ( \sum_{z_{g,x} \in \Z \setminus \{ 0 \}, g \in G, x \in X} z_{g,x} g x) = min \{ v(g x) \mid z_{g,x} \not= 0 \}.
$$
The map $v \mid_{X_i} = v_i$ is defined inductively on $i \geq 0$ by
$$
v_i(x) = 0 \hbox{ if } i = 0 \hbox{ and } v_i(x) = v_{i-1} (\partial_i (x)) \hbox{ for } i > 0.
$$ 
Thus $$v(\partial c) \leq v(c)\hbox{ for }c \in {\bf F}.$$

We define  support $supp_X (c)$ with respect to $X$  for $c = \sum_{g \in G, x \in X_i}  z_{g,x} g x \in F_i \setminus \{ 0 \}$, where $z_{g,x} \in \Z \setminus \{ 0 \}$, as a finite subset of $G$ given by
$$
supp_X (c) = \cup_{g \in G, x \in X_i, z_{g,x} \not= 0}  \ g. supp_X(x) \hbox{ for } i \geq 0,
$$
$$supp_X(x) = supp_X(\partial_i(x)) \hbox{ for } x \in X_i, i \geq 1
$$
and 
$$supp_X(x) = 1_G\hbox{ for } x \in X_0.
$$
For completeness we set  $supp_X(0) = \emptyset$.

 Note that 
 for $c \in F_i$ we have that
 $$supp_X(\partial(c)) \subseteq supp_X(c)$$
 and if $a,b \in F_i$  then  
\begin{equation}  \label{inclusion}
supp_X( a+ b ) \subseteq supp_X(a) \cup supp_X(b).\end{equation}
If $a,b, a+ b \in F_i \setminus \{ 0 \}$, $a = \sum_{g \in G, x \in X_i}  z_{g,x} g x$, $b = \sum_{g \in G, x \in X_i}  \widetilde{z}_{g,x} g x$ and $$\{ (g,x) \in G \times X_{ i} ~| ~ z_{g,x} \not= 0 \} \cap \{ (g,x) \in G \times X_{ i} ~| ~ \widetilde{z}_{g,x} \not= 0 \} = \emptyset$$ then in (\ref{inclusion}) equality is attained.

We state an important result of Bieri and Renz that works for  compact subsets  of $S(G)$.

\begin{theorem} \cite{B-Renz}  \label{BR-compact}
	Let ${\bf F} \to A$ be an admissible free resolution with finitely
	generated $n$-skeleton. Then the following three conditions are equivalent for a
	compact subset $\Gamma \subseteq S(G)$:
	
	(i) $\Gamma \subseteq \Sigma^n(G, A)$;
	
	(ii) there is a finite set $\Phi$ of chain endomorphisms $\varphi : {\bf F} \to  {\bf F},$ lifting $id_A$, with
	the property that for each point $[\chi] \in \Gamma$ 
	there is some
	$\varphi \in \Phi$ with
	$$ v_{\chi}( \varphi(x)) > v_{\chi}(x) \hbox{ for every } x \in X(n);$$
	
	(iii) after replacing ${\bf F}$ by a suitable admissible free resolution, obtained by
	performing on ${\bf F}$ a finite sequence of elementary expansions, we can find a finite set $\Phi$ as
	in (ii) and for each $\varphi \in \Phi$  a chain homotopy $\sigma : \varphi \to Id_{\bf F}$  with $\sigma(X_i) \subseteq X_{i+1
	} \cup  \{ 0 \}$
	for every $i$ with $0 \leq i \leq n$.
\end{theorem}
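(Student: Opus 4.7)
The plan is to prove the equivalences cyclically: $(i)\Rightarrow(ii)$, $(ii)\Rightarrow(iii)$, and $(iii)\Rightarrow(i)$. The implication $(iii)\Rightarrow(ii)$ is formally trivial (forget the homotopies and pass back from the expanded resolution via the chain equivalence induced by the elementary expansions), so the content is in the other three implications, the last of which is the heart of the Bieri--Renz criterion.

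For $(i)\Rightarrow(ii)$ my plan is to argue pointwise and then upgrade to a finite cover using compactness of $\Gamma$. Fix $[\chi] \in \Gamma$; since $[\chi] \in \Sigma^n(G,A)$ means $A$ is of type $FP_n$ over $\Z G_{\chi}$, a standard argument (replacing the $\Z G_{\chi}$-tail of a partial resolution of $A$ by a finitely generated one and then lifting this map back along ${\bf F}$ using projectivity) produces a chain endomorphism $\varphi_\chi:{\bf F}\to{\bf F}$ lifting $\mathrm{id}_A$ with $v_\chi(\varphi_\chi(x))>v_\chi(x)$ for every $x \in X(n)$. The decisive observation is that, for a fixed $\varphi$ and a fixed finite set $X(n)$, the subset of $S(G)$ defined by the conjunction of strict inequalities $v_\cdot(\varphi(x))>v_\cdot(x)$ over $x \in X(n)$ is open: both $v_\cdot(x)$ and $v_\cdot(\varphi(x))$ are minima of finitely many $\R$-linear functions of the character, hence continuous on the sphere, and a finite intersection of strict inequalities is open. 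The endomorphisms $\varphi_\chi$ thus yield an open cover of $\Gamma$, and compactness extracts a finite subcover whose associated $\varphi$'s form the desired $\Phi$.

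For $(ii)\Rightarrow(iii)$ I would absorb the chain homotopies into the basis through elementary expansions. Each $\varphi\in\Phi$ lifts $\mathrm{id}_A$ and is therefore chain-homotopic to $\mathrm{id}_{\bf F}$ via some $\sigma_\varphi$ of degree $+1$; a priori $\sigma_\varphi(x)$ lies in $F_{i+1}$ but need not be a basis element or zero. To force $\sigma_\varphi(X_i)\subseteq X_{i+1}\cup\{0\}$, for each $x \in X_i$ with $i\leq n$ and each $\varphi\in\Phi$ I would adjoin a new free generator $\tilde{x}_{\varphi} \in X_{i+1}$ together with a compensating free generator in $X_{i+2}$ that kills its contribution to homology (this is an elementary expansion preserving admissibility and exactness), and then modify $\sigma_\varphi$ so that $\sigma_\varphi(x)=\tilde{x}_\varphi$. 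The procedure is finite because $X(n)$ and $\Phi$ are both finite, and the new generators can be introduced with $v_\chi$-values controlled uniformly on the compact set $\Gamma$, so that the strict valuation-increase in $(ii)$ is preserved.

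The crux is $(ii)\Rightarrow(i)$, namely extracting the $FP_n$-property of $A$ over $\Z G_\chi$ for each $[\chi]\in\Gamma$ from the single chain endomorphism $\varphi$ increasing $v_\chi$ on $X(n)$. The plan is to iterate $\varphi$: since $\varphi$ lifts $\mathrm{id}_A$ and strictly increases $v_\chi$ uniformly on the finite set $X(n)$, the iterates $\varphi^k$ lift $\mathrm{id}_A$ and satisfy $v_\chi(\varphi^k(x))\to\infty$ as $k\to\infty$; meanwhile, the chain homotopies from $(iii)$ (obtainable after expansion) bridge $\varphi^k$ and $\mathrm{id}_{\bf F}$ through a sequence of ``corrections'' whose supports remain in $X$. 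From this data one assembles, inductively in degrees $0,1,\dots,n$, a $\Z G_\chi$-finitely generated partial resolution of $A$ by translating $X(n)$ into $G_\chi$ via the iterated valuation-increase and using the homotopies to patch together the pieces that fall below the $v_\chi$-threshold. The main obstacle will be the bookkeeping in this inductive construction: one must verify that the finitely generated $\Z G_\chi$-subcomplex one builds is closed under boundaries, that its homology actually computes $A$ in degrees $\leq n$ (and not merely a subobject), and that the ``corrections'' introduced at each stage remain inside a $\Z G_\chi$-finitely generated piece. This passage from chain-level valuation increase to $FP_n$-ness over the monoid ring is the technical heart of the Bieri--Renz argument.
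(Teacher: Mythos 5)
This theorem is quoted from \cite{B-Renz}; the paper supplies no proof of its own, so your attempt can only be measured against the original Bieri--Renz argument. Your architecture matches theirs: the pointwise criterion plus openness-and-compactness for (i)$\Rightarrow$(ii), elementary expansions to normalize the homotopies for (ii)$\Rightarrow$(iii), and iteration of the valuation-increasing $\varphi$ for sufficiency. Your openness observation is correct and is exactly where compactness enters: for fixed $\varphi$ and fixed $x$, $v_\chi(c)=\min\{\chi(g)\mid g\in supp_X(c)\}$ is a minimum of finitely many linear functionals of $\chi$, so the locus where the finitely many strict inequalities hold is open and scale-invariant, and a finite subcover of $\Gamma$ yields $\Phi$.

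The genuine gap is that the implication you yourself identify as the crux, (ii)$\Rightarrow$(i), is described rather than proved. Two points about closing it. First, that implication is purely pointwise: it is one direction of the Bieri--Renz single-character criterion applied to each $[\chi]\in\Gamma$ separately, and compactness plays no role in it; the efficient organization is therefore to establish the pointwise criterion first, after which the compact version is the openness argument above. Second, the mechanism is not ``translating $X(n)$ into $G_\chi$'': one uses the uniform gap $\epsilon=\min_{x\in X(n)}\bigl(v_\chi(\varphi(x))-v_\chi(x)\bigr)>0$ to see that the iterates $\varphi^k$ push valuations to $+\infty$, and the chain homotopies between $\varphi^k$ and $id_{\bf F}$ then show that the directed system of truncations ${\bf F}_{v_\chi\geq r}$ is essentially trivial in homology in degrees $\leq n-1$ (and for the augmented complex in degree $0$); this is precisely the hypothesis of \cite[Thm.~3.2]{B-Renz} --- the same result the present paper invokes at the end of Section \ref{pf1} --- which converts essential triviality into the $FP_n$ property over $\Z G_\chi$. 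Without that conversion, or an equivalent explicit construction of a finitely generated $\Z G_\chi$-resolution, your sketch does not yet deliver (i). A smaller unresolved point in (ii)$\Rightarrow$(iii): each expansion in degree $i+1\leq n$ enlarges $X(n)$, so you must verify that the (suitably extended) maps in $\Phi$ still strictly increase $v_\chi$ on the new generators for every $[\chi]\in\Gamma$; this is where the ``uniform control'' you allude to has to be made precise.
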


\subsection{Proof of  the homological part of  Theorem \ref{ThmI}}  \label{difficult}

 Notice that Theorem \ref{subnormal} already proves one of the directions of Theorem \ref{ThmI} in the homological case. In this subsection we prove the other direction.

\begin{theorem}  \label{diffthm} Let $[H,H] \subseteq K \subseteq H$ be groups such that $H$ and $K$ are of type $FP_n$. Let $\chi : K \to \R$ be a character such that $\chi([H,H]) = 0$.  Suppose that $[\mu] \in \Sigma^n(H, \Z)$ for every character $\mu : H \to \R$ that extends $\chi$. Then $[\chi] \in \Sigma^n(K, \Z)$.
\end{theorem}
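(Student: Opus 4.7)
First I would identify the relevant compact subset of the character sphere of $H$. Since $\chi([H,H])=0$, the character $\chi$ factors through the finitely generated abelian group $K/[H,H]$, which embeds in $H/[H,H]$; tensoring with $\R$ and extending a linear functional then produces an extension $\mu_0:H\to\R$ of $\chi$, and the full set of extensions is the affine subspace $\mu_0+V$ of $\mathrm{Hom}(H,\R)$ with $V=\mathrm{Hom}(H/K,\R)$. I would then consider
\[ \Gamma \;=\; \{[\mu_0+w] : w\in V\}\,\cup\,\{[w] : w\in V\setminus\{0\}\}\;\subseteq\; S(H). \]
This is the image under projectivization of the closed hemisphere $\{t\mu_0+w\in\R\mu_0\oplus V : t\geq 0,\;(t,w)\neq 0\}$ (well-defined because $\mu_0\notin V$, as $\mu_0|_K=\chi\neq 0$), hence is homeomorphic to a closed $r$-disc with $r=\mathrm{rk}(H/K)$ and in particular compact. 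The first piece of $\Gamma$ lies in $\Sigma^n(H,\Z)$ by hypothesis, and the second piece is precisely $S(H,K)$, which is contained in $\Sigma^n(H,\Z)$ by Theorem \ref{BRhomotopic} applied to the subgroup $K$ (using $[H,H]\subseteq K$ and that $K$ is of type $FP_n$). Thus $\Gamma\subseteq\Sigma^n(H,\Z)$.

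Next I would apply Theorem \ref{BR-compact}(iii) to $\Gamma$: after finitely many elementary expansions I obtain an admissible free $\Z H$-resolution ${\bf E}$ of $\Z$ with finitely generated $n$-skeleton $X(n)=\bigcup_{i\leq n}X_i$, a finite set $\Phi$ of chain endomorphisms $\varphi:{\bf E}\to{\bf E}$ lifting $\mathrm{id}_\Z$, and chain homotopies $\sigma_\varphi:\varphi\simeq\mathrm{id}_{\bf E}$ with $\sigma_\varphi(X_i)\subseteq X_{i+1}\cup\{0\}$ for $0\leq i\leq n$, such that for every $[\chi']\in\Gamma$ some $\varphi=\varphi_{\chi'}\in\Phi$ satisfies $v_{\chi'}(\varphi(x))>v_{\chi'}(x)$ for every $x\in X(n)$.

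The main obstacle is to convert this $\Z H$-level data into a certificate for $[\chi]\in\Sigma^n(K,\Z)$. Fixing a right-coset transversal $S\subseteq H$ of $K$ in $H$ with $1\in S$, the complex ${\bf E}$ is free over $\Z K$ on the (generally infinite) basis $S\cdot X$. Following closely the arguments of \cite[Thm.~B]{B-Renz} and \cite[Satz~C]{Renzthesis}, the plan is to use the chain endomorphisms in $\Phi$, together with the identity $\varphi(sx)-sx=\partial\sigma_\varphi(sx)+\sigma_\varphi\partial(sx)$, to rewrite iteratively any basis element $sx$ with $s$ lying outside a bounded region of $S$ as a $\Z K$-combination of basis elements $s'x'$ with $s'$ in a fixed finite subset of $S$, modulo boundaries. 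The two families of characters in $\Gamma$ play complementary roles: strict increase of $v_{[w]}$ for $[w]\in S(H,K)$ forces the iteration to shrink the $s$-support of all remaining terms until it lies in the preimage of the torsion of $H/K$, a finite set of $K$-cosets since $H/K$ is a finitely generated abelian group; simultaneously, strict increase of $v_{\mu_0+w}$ for the extensions of $\chi$ certifies that the resulting finite $n$-skeleton is of type $FP_n$ over $\Z K_\chi$. The technical difficulty lies in the simultaneous bookkeeping of these two directions: the finiteness of $\Phi$ and the compactness of $\Gamma$ must be used to guarantee a uniform termination of the iteration, and the output has to be packaged as an admissible $\Z K$-resolution of $\Z$ with finitely generated $n$-skeleton together with a chain endomorphism witnessing $[\chi]\in\Sigma^n(K,\Z)$.
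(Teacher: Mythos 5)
Your opening moves coincide with the paper's: the compact set $\Gamma$ consisting of (the classes of) all extensions of $\chi$ together with $S(H,K)$, the observation that the first piece lies in $\Sigma^n(H,\Z)$ by hypothesis and the second by Theorem \ref{BRhomotopic} applied to $K$, and the application of Theorem \ref{BR-compact}(iii) to obtain the finite set $\Phi$ of chain endomorphisms and homotopies. (The paper also first passes to a finite-index subgroup $H_0$ with $H_0/[H,H]$ and $H_0/K_0$ torsion-free, using Theorem \ref{teo-Sigma.de.gr.=.Sigma.de.subgr.}, so that $H_0/[H,H]\otimes\R$ carries a genuine Euclidean structure; this is a small but necessary step your sketch omits.)

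The gap is that everything after this point — which is the actual content of the theorem — is only announced, not proved. Your last paragraph correctly identifies the problem ("the simultaneous bookkeeping of these two directions ... uniform termination of the iteration") but supplies no mechanism for solving it, and the informal description you give (first shrink the $s$-support using the characters vanishing on $K$, then certify $FP_n$ over $\Z K_\chi$ using the extensions of $\chi$) does not work as two separate passes: a push that improves one family of valuations can destroy the gains made for the other. The paper's resolution is a single geometric device: in $V=H_0/[H,H]\otimes\R$ one filters by the sets $D_r$, the closed $r$-neighborhoods of the half-space $W_{\chi_0\geq 0}$, and measures a finite subset $\Delta\subseteq H_0$ by $\alpha(\Delta)=\min\{r\mid\pi(\Delta)\subseteq D_r\}$. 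If a chain $c$ has a support element $g$ realizing $\alpha(\mathrm{supp}_X(c))=r>a$, the character $\nu$ used to push is dictated by where $\pi(g)$ sits on $\partial D_r$: it points from $\pi(g)$ toward the nearest point $w$ of $W_{\chi_0\geq 0}$, and one checks that $[\nu]\in\Gamma$ (either $\nu$ vanishes on $K_0$ or restricts to a positive multiple of $\chi_0$). The replacement $\widetilde{c}=c+\partial\sigma(c')$ then removes $g$, and the uniform constants $s$ (bounding $\|\pi(h)-\pi(g)\|$ for new support elements $h$) and $t$ (the uniform valuation gain over the compact $\Gamma$) feed into a two-fold Pythagorean estimate showing $\alpha(\{h\})\leq r-\epsilon_0$ once $r\geq a$. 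This uniform decrease is what terminates the iteration and proves that the subcomplexes $\{f\mid\pi(\mathrm{supp}_X(f))\subseteq D_{r,\chi_0\geq j}\}$ are exact in dimensions $1,\dots,n-1$. Finally, the conclusion is not packaged as a chain endomorphism over $\Z K$ as you propose, but by sandwiching the valuation subcomplexes of the finite-type free $\Z K_0$-resolution $\widetilde{\mathbf{F}}_r$ (obtained from the classical Bieri--Renz argument applied to $\pi_0:H_0\to H_0/K_0$) between the exact subcomplexes above, so that the induced maps in homology along the filtration vanish and \cite[Thm.~3.2]{B-Renz} applies. Without the definition of $D_r$, the choice of pushing character, and the quantitative $\epsilon_0$-progress estimate, the proposal is a plan rather than a proof.
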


\begin{proof} 
Let $H_0$ be a subgroup of finite index in $H$ that contains the commutator $[H,H]$ such that 	$H_0/ [H,H]$ is torsion-free and for $K_0= H_0 \cap K$ we have $H_0/ K_0$ is torsion-free. Thus $H_0/ [H,H] \simeq K_0/ [H,H] \times H_0/ K_0 \simeq \mathbb{Z}^m \times \mathbb{Z}^{k-m} \simeq \mathbb{Z}^k$.

Set $$Q_0 = H_0/ [H,H], \  V = Q_0 \otimes_{\Z} \R \simeq \R^k \hbox{ and } W = K_0 / [H,H] \otimes_{\Z} \R \subseteq V.$$ We identify $W$ with a subspace of $V$. We  identify $V$ with $\R^k$, $Q_0$ with $\Z^k$ and consider the standard inner product $\langle - , - \rangle$ in $\R^k$.

By Theorem \ref{teo-Sigma.de.gr.=.Sigma.de.subgr.} for a character $\mu : H \to \R$ we have that $$[\mu] \in \Sigma^n(H, \Z)\hbox{ if and only if }[\mu_0] \in \Sigma^n(H_0, \Z),$$ where $\mu_0$ is the restriction of $\mu$ on $H_0$. Now the same holds for the character $\chi  : K \to \R$ from the statement and its restriction $\chi_0 : K_0 \to \R$ i.e. $$[\chi] \in \Sigma^n(K, \Z) \hbox{ if and only if }[\chi_0] \in \Sigma^n(K_0, \Z).$$ 
	Thus to prove the result we assume that for each character $\mu_0 : H_0 \to \R$ that extends the character $\chi_0$ we have \begin{equation} \label{new-line1} [\mu_0] \in \Sigma^n(H_0, \Z) \end{equation}  and aim to prove that
	$[\chi_0] \in \Sigma^n(K_0, \Z)$.
	
	Note that there are unique $c_{\mu_0} \in V$, $c_{\chi_0} \in W$ such that $$\mu_0(h) = \langle \pi(h), c_{\mu_0} \rangle \hbox{ for }h \in H_0$$ and $$\chi_0(g) = \langle \pi(g), c_{\chi_0} \rangle \hbox{  for }g \in K_0,$$ where $$\pi : H_0 \to Q_0$$ is the canonical projection.  Thus $c_{\chi_0}$ is the orthogonal projection of $c_{\mu_0}$ to the subspace $W$.
	
	Consider 
	$$
	W_{\chi_0 \geq 0} = \{ v \in W \mid \langle v,  c_{\chi_0} \rangle \geq 0 \}
	$$
	and
	$$
	W_{\chi_0 = 0} = \{ v \in W \mid \langle v,  c_{\chi_0} \rangle =  0 \}.
	$$
	Define
	$$
	D_r = \cup_{w \in 	W_{\chi_0 \geq 0}} B(w, r),
	$$
	where $B(w,r)$ is the closed ball in $V$ with center $w$ and radius $r$.
	Write 
	$$
	B(w,r)_{\chi_0 \leq 0} = \{ v \in B(w,r) \mid \langle v - w, c_{\chi_0} \rangle \leq 0 \}.
	$$
	Let $S(w,r)$ be the boundary of the closed ball $B(w,r)$, i.e. $S(w,r)$ is the sphere with center $w$ and radius $r$.
Write $$\delta : V \to W	$$for the orthogonal projection.
	Consider the boundary $B_r$ of $D_r$
	$$
	B_r = \partial D_r = 
	(\cup_{w \in 	W_{\chi_0 = 0}} \{ v \in S(w,r) \mid \delta(w - v) = \lambda c_{\chi_0} \hbox{ for } \lambda \in \R_{>0}  \}) 
	$$ $$\cup(\cup_{w \in 	W_{\chi_0 \geq 0}} \{v \in S(w,r) \mid \delta(v) = w \}).$$

	We start by fixing an admissible free resolution with finite $n$-skeleton $${\bf F} \to \Z$$  of the trivial $\Z H_0$-module $\Z$. Consider the compact set 
	$\Gamma$  $$\Gamma = \{ [\mu_0] \in S(H_0)  \mid \mu_0 |_{K_0} = \chi_0 \} \cup S(H_0, K_0)    \subseteq \Sigma^n(H_0, \Z) \subseteq  S(H_0),
	$$
	where the first inclusion uses (\ref{new-line1}) and Theorem \ref{BRhomotopic}. Thus
	we can apply Theorem \ref{BR-compact} i.e. there is a finite set $\Phi$ of chain endomorphisms $$\varphi : 
	{\bf F} \to {\bf F}$$ lifting $id_{\Z}$ such that for every
	$[\nu] \in \Gamma$ there is some $\varphi \in \Phi$ with
	\begin{equation} \label{importante1}
	v_{\nu} (\varphi(x)) > v_{\nu}(x) \hbox{ for every } x \in X(n)
	\end{equation} 
	and chain homotopy $$\sigma_{\varphi} : \varphi \simeq id_{\bf F} \hbox{ for each }\varphi \in \Phi$$  such that
	\begin{equation} \label{importante2} \sigma_{\varphi} (X_i) \subseteq X_{i+1} \cup \{ 0 \} \hbox{ for every } 0 \leq i \leq n.
	\end{equation}

	Similarly to \cite{B-Renz}  we define 
	\begin{equation} \label{def-s} s = max \{ || \pi( \sigma_{\varphi} (y)) || ~ \mid
	~  \varphi  \in \Phi,  ~ 1_{ H_0}\in supp_X(y), ~ y \in H_0 X(n)  \}. \end{equation}
Note that $s$ is well-defined since both the sets $X(n)$ and   $\{  y \in H_0 X(n) ~| ~ 1_{ H_0} \in supp_X(y) \}$ are finite.
	
	 Since $\Gamma$ is compact and both $X(n)$ and $\Phi$ are finite there is a real number \begin{equation} \label{def-t} t = 
		min_{x \in X(n), \varphi \in \Phi, [\nu] \in \Gamma}(v_{\nu}(\varphi(x) ) - v_{\nu}(x)) > 0.\end{equation}	Then by \cite[Lemma~4.3]{B-Renz} for $\lambda \in \cup_{j \leq n} F_j$  and $\varphi \in \Phi$, $[\nu] \in \Gamma$ we have \begin{equation} \label{use-t}
	supp_X \sigma_{\varphi} (\lambda) \subseteq supp_X( \lambda) \cup (H_0)_{\nu \geq v_{\nu}(\lambda) + t}. \end{equation}	
	Set the constants 
	\begin{equation} \label{def-constants}  	\epsilon_0 = \frac{1}{2} min \{ s , t  \}\hbox{   and }  a =  max \{ \frac{s^2 - \epsilon_0^2}{2 (t - \epsilon_0)}, \epsilon_0 \}.
 \end{equation} 
		\medskip
	Let $\Delta$ be a finite subset of $H_0$ and recall that $\pi : H_0 \to Q_0 = H_0/ [H,H] \subset V
	$ is the canonical projection. Define
	\begin{equation} \label{Delta}
	\alpha(\Delta) = min \{ r \mid \pi(\Delta) \subseteq D_r \}.
	\end{equation} 
	
	We will prove later the following fact.
	
	\medskip
	{\bf Claim } {\it For every cycle $z \in F_{j-1} \setminus \{ 0 \}$  with $j \leq n$ such that $\alpha(supp_X(z)) \geq a$ there is $c_0 \in F_j$ with $\partial(c_0) = z$ and $\alpha(supp_X(c_0)) \leq  \alpha (supp_X(z))  $.}
	
	\medskip
	The Claim  implies that  for every $r \geq a$
	$$
	{\bf F}_r : = \{ f \in {\bf F} \mid \pi(supp_X(f)) \subseteq D_r \}$$ 
	is a subcomplex  of ${\bf F}$ that is exact  in dimensions $1, \ldots, n-1$  and ${\bf F}_r \to \mathbb{Z}$ is exact in dimension 0. This will complete the proof modulo the following argument.
	
	Define 
	$$
	W_{\chi_0 \geq j} = \{ v \in W \mid \langle v,  c_{\chi_0} \rangle \geq j \}
	$$
	and 
	\begin{equation} \label{needed}
	D_{r, \chi_0 \geq j} = \cup_{w \in 	W_{\chi_0 \geq j}} B(w, r),
	\end{equation}
	where $B(w,r)$ is the closed ball in $V$ with center $w$ and radius $r$. Thus $$D_r = 	D_{r, \chi_0 \geq 0}.$$ The proof of the Claim will show that for every $r \geq a$ and every $j \in \mathbb{R}$ we have
	$$
	{\bf F}_{r, \chi_0 \geq j} : = \{ f \in {\bf F} \mid \pi(supp_X(f)) \subseteq D_{r, \chi_0 \geq j} \}$$ 
	is a subcomplex  of ${\bf F}$ that is exact  in dimensions $1, \ldots, n-1$.
	
	The original Bieri-Renz argument from \cite{B-Renz}  gives as well that
	$$
	{\bf \widetilde{F}}_r : = \{ f \in {\bf F} \mid \pi_0(supp_X(f)) \subseteq \widehat{B}_r \}
	$$
		is a subcomplex  of ${\bf F}$ that is exact  in dimensions $1, \ldots, n-1$ and $H_0({\bf \widetilde{F}}_r \to \mathbb{Z})  = 0$, where $$\pi_0 : H_0 \to H_0/ K_0 \simeq \mathbb{Z}^{k-m}$$ is the canonical projection and $\widehat{B}_r$ is the closed ball in $H_0/ K_0 \otimes \mathbb{R} \simeq \mathbb{R}^{k-m}$  with radius $r$ and center the origin.
	Thus  $	{\bf \widetilde{F}}_{r}$ is a partial free resolution 	up to dimension $n$ of the trivial $\mathbb{Z} K_0$-module $\mathbb{Z}$ with all modules in dimensions up to $n$ being finitely generated.

	Observe that $$	{\bf \widetilde{F}}_r = \cup_{j \in \mathbb{R}} 	{\bf F}_{r, \chi_0 \geq j}. $$
	Let \begin{equation} \label{pi-1} \pi_1 : H_0 \to H_0/ [H,H] = (K_0/ [H,H]) \times (H_0/ K_0) \to K_0/ [H_0,H_0], \end{equation}  be the composition where the first map is the canonical projection  and the second map is the projection on the first factor. 
	Set
	$$
	{\bf \widetilde{F}}_{r, \chi_0 \geq j}  := \{ f \in 	{\bf \widetilde{F}}_r \mid \pi_1(supp_X(f)) \subseteq \chi_0^{-1}([j, \infty)) \}.
	$$	 Fix some $r \geq a$. Note that there is a fixed negative integer $r_0$ that depends on $r$ such that for every $j \in \mathbb{R}$
	$$
		{\bf \widetilde{F}}_{r, \chi_0 \geq j} \subseteq 	{\bf F}_{r, \chi_0 \geq j}  \subseteq 		{\bf \widetilde{F}}_{r, \chi_0 \geq  j+ r_0} \subseteq 	{\bf F}_{r, \chi_0 \geq j + r_0}.
		$$ 
	The above inclusions induce maps in homology 
	$$
	H_i( 	{\bf \widetilde{F}}_{r, \chi_0 \geq j} ) \to H_i({\bf F}_{r, \chi_0 \geq j}) = 0 \to H_i( {\bf \widetilde{F}}_{r, \chi_0 \geq j+  r_0} ) \hbox{ for }   1 \leq i \leq n-1  \hbox{ and }j \in \mathbb{R}$$ 
	 and $$
	H_0( 	{\bf \widetilde{F}}_{r, \chi_0 \geq j} \to \mathbb{Z} ) \to H_0({\bf F}_{r, \chi_0 \geq j} \to \mathbb{Z}) = 0 \to H_0( {\bf \widetilde{F}}_{r, \chi_0 \geq j+  r_0} \to \mathbb{Z}) \hbox{ for } j \in \mathbb{R}.$$ Hence
	\begin{equation} \label{B-R12-1}
	H_i( 	{\bf \widetilde{F}}_{r, \chi_0 \geq j} ) \to  H_i( {\bf \widetilde{F}}_{r, \chi_0 \geq  j + r_0} ) \hbox{ is the zero map for }   1 \leq i \leq n-1 \hbox{ and } j \in \mathbb{R}
	\end{equation} 
	 and
	\begin{equation} \label{B-R12-2}
	H_0( 	{\bf \widetilde{F}}_{r, \chi_0 \geq j} \to \mathbb{Z}) \to  H_0( {\bf \widetilde{F}}_{r, \chi_0 \geq  j + r_0} \to \mathbb{Z}) \hbox{ is the zero map for }  j \in \mathbb{R}.
	\end{equation} 
Recall that  $	{\bf \widetilde{F}}_{r}$ is a partial free resolution 	up to dimension $n$ of the trivial $\mathbb{Z} K_0$-module $\mathbb{Z}$ with all modules in dimensions up to $n$ being finitely generated. Then  (\ref{B-R12-1}) and (\ref{B-R12-2}) we can apply \cite[Thm.~3.2]{B-Renz} to deduce
$$[\chi_0] \in \Sigma^n(K_0, \Z).$$

\begin{inote}
 Notice that ${\bf \widetilde{F}}_{r, \chi_0 \geq j}$ is a filtration of ${\bf \widetilde{F}}_r$ that comes from the valuation $v$ such that $v(f) = min (\chi_0(\pi_1(supp_X(f)))$ for all $f \in {\bf \widetilde{F}}_r$. We could also consider a new  basis $\tilde{X}$ of ${\bf \widetilde{F}}_r$ as  complex of free $\mathbb{Z}K_0$-modules, which gives rise to a new Bieri-Renz valuation. The valuations are clearly equivalent (in the sense of \cite[Section~2.2]{B-Renz}), which justifies the application of \cite[Thm.~3.2]{B-Renz} (see the remark following it).
\end{inote}

	{\bf Proof of the Claim} Let $c \in F_j$ be any element such that $\partial(c) = z$. If $\alpha (supp_X(c)) \leq \alpha (supp_X(z)) $ we are done, so assume that  $$\alpha (supp_X(c)) >  \alpha (supp_X(z)) \geq a.$$ Let $g \in supp_X(c)$ such that
	$$
	\alpha( \{ g \}) = \alpha (supp_X(c)) = r > a.$$
	Thus
	$$\pi(g) \in \partial D_r = B_r.$$
	
	Then either
	
	1. 
	$\pi(g) \in S(w,r)$ for some $w \in W_{\chi_0 \geq 0},$ where $w$ is the orthogonal projection  $\delta(\pi(g))$ of $\pi(g)$ to $W$;
	
	or
	
	2. 	$\pi(g) \in S(w,r)$ for some $w \in W_{\chi_0 = 0},$ where	$w$ is the orthogonal projection of $\pi(g)$ to $W_{\chi_0 = 0}$ and $\delta( w - \pi(g)) = \lambda c_{\chi_0}$ for some $\lambda  \in \mathbb{R}_{>0}$.
	
	Consider the character  $$\nu : H_0 \to \R$$
	given by \begin{equation} \label{def-nu} \nu(h) = \langle \pi(h), \frac{w - \pi(g)}{|| w - \pi(g) ||} \rangle.\end{equation} Then in case 1 $\nu(K_0) = 0$ and in case 2 
	the restriction of $\nu$ to $K_0$ is $ \lambda \chi_0$ for some $\lambda \in \R_{>0}$.
		In both cases 
	$$
	[\nu] \in \Gamma \subseteq \Sigma^n(H_0, \Z).
	$$ 	
		For the people familiar with the Bieri-Renz paper \cite{B-Renz} this means that we can "push"  every element of $H_0 \cap B_r$ inside  $D_r$ using the pushing algorithm from \cite{B-Renz}. We will explain in details in the rest of the section what this means. 	
The aim of the proof is to find an element $\widetilde{c} \in F_j$ such that $$\partial_j(c) = z = \partial_j(\widetilde{c})$$ and
	$$
	supp_X(\widetilde{c}) \subseteq (supp_X(c) \setminus \{ g \}) \cup  D_{r - \epsilon_0},
	$$
	 where  $\epsilon_0$  was defined in (\ref{def-constants}). Applying this procedure several times  we get the desired element $c_0 \in F_j$  from the Claim.

	In order to construct $\widetilde{c}$ we decompose $c = \sum z_y y$, where $z_y \in \Z \setminus \{ 0 \}, y \in H_0 X_j$ and write
	$$c = c' + c'',$$
	where  $c'$  contains precisely the terms $z_y y$ for which $g \in supp_X(y)$. Thus  $y$ cannot appear simultaneously  in $c'$ and $c''$ and this guarantees that $$supp_X(c) = supp_X(c') \cup supp_X(c'').$$ Then  $$g \not\in supp_X(c''), ~  supp_X(c'') \subseteq supp_X(c) ~ \hbox{ and } ~  g \in supp_X(c') \subseteq supp_X(c).$$Fix $\varphi$, $\sigma_{\varphi}$ and $v_{\nu}$  as before, for simplicity we denote by $\varphi$, $\sigma$ and $v$.  Define
	\begin{equation} \label{def-new} 
	\widetilde{c} = c + \partial \sigma (c') = \varphi(c') - \sigma \partial (c') + c''.\end{equation}

	 Recall (\ref{use-t}), i.e. for $t$ defined in (\ref{def-t}) and  for every element $\lambda \in \cup_{j \leq n}  F_j$ we have  \begin{equation} \label{eq-t0}
	supp_X \sigma(\lambda) \subseteq supp_X( \lambda) \cup (H_0)_{\nu \geq v(\lambda) + t}. \end{equation}
	Hence 
	\begin{equation} \label{eq-t}
	supp_X(\sigma \partial (c'))  \subseteq supp_X( \partial (c')) \cup (H_0)_{\nu \geq v(\partial(c')) + t} \subseteq supp_X( \partial (c')) \cup (H_0)_{\nu \geq v(c') + t},
	\end{equation}
	 where the last inclusion follows from the fact that
		$$ supp_X( \partial (c') ) \subseteq supp_X( c' ).$$
	Since $\partial (c') = z - \partial (c'')$ we get
	\begin{equation} \label{eq-t2} supp_X( \partial (c') ) \subseteq supp_X(z) \cup supp_X( \partial (c'') ).
	\end{equation}
	Furthermore  
	\begin{equation} \label{eq-t3} supp_X( \partial (c'') ) \subseteq supp_X( c'' ).\end{equation} Thus combining (\ref{eq-t}),  (\ref{eq-t2}) and (\ref{eq-t3}) we obtain
	$$supp_X(\sigma \partial (c')) \subseteq (H_0)_{\nu \geq v(c') + t} \cup
	supp_X(z) \cup supp_X( c'' ).$$	
	Since $v(c')  = \nu(g)$ we have $g \notin 	(H_0)_{\nu \geq v(c') + t} $. Furthermore $\alpha(  supp_X(c)) = \alpha( \{ g \}) > \alpha ( supp_X(z))$ implies that $g \notin supp_X(z) $ and so
	$$g \notin
	(H_0)_{\nu \geq v(c') + t} \cup
	supp_X(z) \cup supp_X( c'' )
	$$ and we deduce that
	\begin{equation} \label{bus} g \notin supp_X(\sigma \partial (c')).\end{equation}	
	In addition by (\ref{eq-t0})
	$$supp_X ( \widetilde{c} ) \subseteq supp_X (c) \cup supp_X( \partial \sigma(c')) \subseteq supp_X (c) \cup supp_X(  \sigma(c')) \subseteq $$ $$ supp_X (c) \cup supp_X (c') \cup (H_0)_{ \nu \geq v(c') + t} =  supp_X (c) \cup (H_0)_{ \nu \geq v(c') + t} \subseteq $$ $$ supp_X (c) \cup (H_0)_{ \nu \geq v(c) + t}.
	$$
	Note that $v(c) = v(c') =  - \alpha(\{ g \})$ and $t > 0$. Hence
	\begin{equation} \label{eq-t4}
	supp_X ( \widetilde{c} ) \subseteq supp_X (c) \cup (H_0)_{ \nu \geq - \alpha(\{ g \}) + t}.
	\end{equation}
	Note that by (\ref{importante1})
	$$
	v(\varphi(c'))  > v(c') = - \alpha(\{ g \}),
	$$
	hence \begin{equation} \label{bus1} g \not\in supp_X(\varphi(c')).\end{equation} Furthermore by the definition of $c''$ we have $g \not\in supp_X(c'').$ Then by (\ref{def-new}), (\ref{bus}) and (\ref{bus1}) we obtain
	\begin{equation} \label{bus2} g \notin supp_X(\widetilde{c}).\end{equation}
	
	Now for every $y \in H_0 X_j$ in the decomposition $c' = \sum  z_y y$ we have $g \in supp_X(y)$, hence $$ 1_{ H_0} \in g^{-1} supp_X(y).$$ By the definition of $s$ in (\ref{def-s}) we have that  $$ || \pi(b) || \leq   s  \hbox{ for } b \in g^{-1} supp_X(  \sigma (c')). $$
	Then by the definition of $\widetilde{c}$ for every
	$$
	h \in supp_X(\widetilde{c}) \setminus supp_X(c) \subseteq supp_X( \partial \sigma (c')),$$  hence $$ g^{-1} h \in g^{-1} supp_X( \partial \sigma (c'))  \subseteq  g^{-1} supp_X(  \sigma (c')) $$ and by the above 
	$$ || \pi(g^{-1} h) || \leq s.$$
	Moving to additive notation in $V$ we have
	\begin{equation} \label{eq-h1} || \pi(h) - \pi(g) || \leq s. \end{equation}
	Recall that by the definition of $h$ we have $h \not\in supp_X(c)$ and furthermore by (\ref{eq-t}) 
	$$h \in supp_X(\widetilde{c}) \setminus supp_X(c) \subseteq supp_X(\partial \sigma(c')) \subseteq $$ $$ supp_X(\sigma(c')) \subseteq supp_X(c') \cup (H_0)_{\nu \geq v(c') + t}  \subseteq supp_X(c) \cup (H_0)_{\nu \geq v(c') + t},$$
	hence  
	$$h \in (H_0)_{\nu \geq v(c') + t}$$ and 
	\begin{equation} \label{eq-h2}
	\nu(h) \geq v(c') + t =  - \alpha(\{ g \}) + t = - r + t.\end{equation}
	Recall that $g$ belongs to the boundary of the sphere with
	radius $r = \alpha( \{ g \})$ and center $w$ and $\nu$ is defined by (\ref{def-nu}).
	The conditions (\ref{eq-h1}) and (\ref{eq-h2}) imply that $h$ belongs to the sphere with radius $r_0 $ and center $w$, where  by the Pythagorean theorem applied twice
	$$
	r_0^2 = s^2 - t^2 + (r-t)^2 = r^2 - 2 r t + s^2.
	$$
	Let $$\epsilon_0 = \frac{1}{2} min \{ s , t  \}.$$
	Then $r_0 \leq r - \epsilon_0$ is equivalent to
	$r  \geq \epsilon_0$ and
	$
	r^2 - 2 r t + s^2 \leq (r - \epsilon_0)^2 = r^2 - 2 r \epsilon_0 + \epsilon_0^2.
	$
	The last is equivalent to 
	$
	r \geq \frac{s^2 - \epsilon_0^2}{2 (t - \epsilon_0)}.
	$
	Thus it suffices that $$r \geq a = max \{ \frac{s^2 - \epsilon_0^2}{2 (t - \epsilon_0)}, \epsilon_0 \}.$$
\end{proof}

\section{Renz's criteria and the homotopical part of Theorem \ref{ThmI}}  \label{pf2}

Recall that for a group $G$ of type $F_n$ we have $\Sigma^n(G) = \Sigma^2(G) \cap \Sigma^n(G, \Z)$, thus in order to complete the proof of Theorem \ref{ThmI} we only need to consider the homotopical invariant in dimension $2$. The result that we wish to prove can be stated as follows.

\begin{theorem} Let $[H,H] \subseteq K \subseteq H$ be groups such that $H$ and $K$ are finitely presentable. Let $\chi : K \to \R$ be a character such that $\chi([H,H]) = 0$. Then $[\chi] \in \Sigma^2(K)$ if and only if $[\mu] \in \Sigma^2(H)$ for every character $\mu : H \to \R$ that extends $\chi$.
	\end{theorem}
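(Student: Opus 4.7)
Since $\Sigma^n(G) = \Sigma^2(G) \cap \Sigma^n(G,\Z)$ for every $n \geq 2$ and the homological part of Theorem~\ref{ThmI} is already established in Section~\ref{pf1}, only the $n = 2$ homotopical case displayed above remains to be handled. The plan is to mirror the proof of Theorem~\ref{diffthm} in Section~\ref{difficult} step by step, replacing the free resolution ${\bf F}$ and the Bieri--Renz criterion of Theorem~\ref{BR-compact} by the 2-skeleton of the universal cover of a finite $K(G,1)$ (equivalently, the Cayley 2-complex of a finite presentation) and by Renz's homotopical analogue of that criterion from \cite{Renzthesis}. Renz's criterion says that, for a compact $\Gamma \subseteq S(G)$, the condition $\Gamma \subseteq \Sigma^2(G)$ is equivalent to the existence of a finite family $\Phi$ of chain endomorphisms $\varphi$ of the 2-skeleton lifting $\mathrm{id}$, together with chain homotopies $\sigma_\varphi \simeq \mathrm{id}$, satisfying the valuation-increasing inequalities (\ref{importante1})--(\ref{importante2}); the distinguishing homotopical feature is that each $\varphi$ and $\sigma_\varphi$ arises from a cellular selfmap and a cellular homotopy of the Cayley 2-complex, not merely from an algebraic chain map.

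For the ``only if'' direction, let $\mu : H \to \R$ extend $\chi$. Since $[H,H] \subseteq K$, the quotient $H/K$ is a finitely generated abelian group, so any finite presentation $\langle X_K \mid R_K\rangle$ of $K$ together with lifts $T \subset H$ of generators of $H/K$ yields a finite presentation
\[\langle\, X_K \cup T \;\mid\; R_K \cup R_{\mathrm{conj}} \cup R_{\mathrm{ab}}\,\rangle\]
of $H$, in which $R_{\mathrm{conj}}$ encodes the conjugation action of $T$ on $X_K$ (with values in $K$ by normality) and $R_{\mathrm{ab}}$ expresses the commutators among $T$-letters as words in $X_K$ (possible because $[H,H] \subseteq K$). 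A direct inspection shows that any edge loop in $\mathcal{C}_\mu$ can be rewritten, using the 2-cells coming from $R_{\mathrm{conj}}$ and $R_{\mathrm{ab}}$ (all of which are supported in $\mathcal{C}_\mu$), to an edge loop that lies in translates $g\mathcal{C}_\chi$ with $\mu(g) \geq 0$; each such loop bounds a disk in $g\mathcal{C}_\chi$ by the hypothesis $[\chi] \in \Sigma^2(K)$. Hence $\mathcal{C}_\mu$ is simply connected and $[\mu] \in \Sigma^2(H)$.

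For the ``if'' direction, first pass to finite index subgroups $H_0, K_0, \chi_0$ as in Section~\ref{difficult}, using the homotopical analogue of Theorem~\ref{teo-Sigma.de.gr.=.Sigma.de.subgr.}. By hypothesis and Renz's homotopical version of Theorem~\ref{BRhomotopic}, the compact set
\[\Gamma = \{[\mu_0] \in S(H_0) \mid \mu_0|_{K_0} = \chi_0\} \cup S(H_0, K_0)\]
is contained in $\Sigma^2(H_0)$. Renz's criterion then supplies a finite family $\Phi$ and chain homotopies $\sigma_\varphi$ on the 2-skeleton of a suitable free $H_0$-resolution, and the Claim of Section~\ref{difficult} is reproduced verbatim for cycles $z \in F_1 \setminus \{0\}$: the pushing algorithm is a formal manipulation of supports, chain homotopies and valuations, and therefore transfers without change to the truncated setting. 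The filtration argument that produces (\ref{B-R12-1})--(\ref{B-R12-2}) then invokes Renz's homotopical analogue of \cite[Thm.~3.2]{B-Renz} in place of the homological version to conclude $[\chi_0] \in \Sigma^2(K_0)$, hence $[\chi] \in \Sigma^2(K)$.

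The main obstacle is in the ``if'' direction: one has to verify that the purely algebraic pushing of Section~\ref{difficult} remains compatible with the cellular (homotopical) structure provided by Renz's criterion at each iteration, so that the output 2-chains $c_0$ correspond to actual cellular disk-fillings of the 1-cycles $z$. The combinatorial bookkeeping on supports and valuations carries over word for word, but one must check that the cellularity of each $\varphi$ and $\sigma_\varphi$ is preserved when they are repeatedly composed and that the resulting filtration detects genuine simple connectivity of the relevant subcomplex rather than only vanishing of $H_1$. This is the point where the argument departs subtly from the homological case, but it follows from the functoriality of Renz's cellular model once the formal analogy is set up.
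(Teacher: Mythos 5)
Your reduction to the $n=2$ homotopical case is the right starting point, but both directions as you sketch them have real gaps. For the ``if'' direction (extensions in $\Sigma^2(H)$ imply $[\chi]\in\Sigma^2(K)$), your plan is to rerun the pushing algorithm of Section \ref{difficult} on the $2$-skeleton of a free resolution and invoke a ``homotopical analogue of \cite[Thm.~3.2]{B-Renz}''. That cannot work as stated: a chain-level argument on free modules only controls $H_0$ and $H_1$ of the filtration subcomplexes, i.e.\ it reproves the $\Sigma^2(K,\Z)$ statement you already have; it says nothing about $\pi_1$, which is what $\Sigma^2(K)$ requires. You flag this yourself at the end and defer it to ``functoriality of Renz's cellular model'', but that is precisely the missing proof. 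The paper avoids the issue by never leaving the geometric category: Renz's criteria are used in their word/diagram form (Theorems \ref{crit1}, \ref{crit2}, \ref{thmcpct}), the pushing is performed first on edge-paths in the Cayley graph of $H_0$ to show the subgraphs $C_d$ are connected, and then on van Kampen diagrams (replacing $2$-cells $e_r$ by the diagrams $M_r'$ built from the $M_{\widehat{r}_t}$ and cancelling adjacent cells $e_\rho$, $e_{\rho^{-1}}$) to show the subcomplexes $\mathcal{C}_d$ are simply connected; the conclusion $[\chi_0]\in\Sigma^2(K_0)$ then comes from Meinert's Theorem A in \cite{Me} applied to the $K_0$-action on $\widetilde{\mathcal{C}}_d$ with a $\chi_0$-equivariant height function, not from a filtration theorem for chain complexes.

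The ``only if'' direction also needs repair. The paper does not rewrite loops by hand: it passes to a finite-index subgroup with $H_0/K_0$ free abelian, reduces by induction to $H=K\rtimes\Z$, and quotes Meinert's HNN-extension theorem \cite[Thm.~B]{Me} with $A=B=K$ (noting $[\chi]\in\Sigma^2(K)\subseteq\Sigma^1(K)$ and $\chi\neq 0$ on the associated subgroup). Your direct argument asserts that the $R_{\mathrm{conj}}$- and $R_{\mathrm{ab}}$-cells are ``supported in $\mathcal{C}_\mu$'' and that ``a direct inspection'' pushes an arbitrary loop of $\mathcal{C}_\mu$ into translates $g\mathcal{C}_\chi$ with $\mu(g)\geq 0$. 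Neither claim is automatic: a loop in $\mathcal{C}_\mu$ typically visits many cosets of $K$ with $\mu$-values that dip arbitrarily close to $0$ from above, and the rewriting cells attached at such vertices need not stay in $\mathcal{C}_\mu$ unless the conjugating words are chosen with exactly the valuation control that the HNN machinery of \cite{Renz} and \cite{Me} is designed to provide. Either supply that bookkeeping explicitly or, as the paper does, reduce to $K\rtimes\Z$ and cite the HNN result.
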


 Note that Theorem \ref{subnormal} is not applicable here, so we need to consider both implications of the statement above. 

\subsection{Extension of characters}
\begin{prop} Let $[H,H] \subseteq K \subseteq H$ be groups such that $H$ and $K$ are finitely presentable. Let $\chi : K \to \R$ be a character such that $\chi([H,H]) = 0$ and $[\chi] \in \Sigma^2(K)$. Then $[\mu] \in \Sigma^2(H)$ for every character $\mu : H \to \R$ that extends $\chi$.
\end{prop}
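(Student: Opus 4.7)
The plan is to mirror the structure of Theorem \ref{subnormal} in the homotopical setting, exploiting that $K$ is normal in $H$ (as $[H,H]\subseteq K$) to propagate the $\Sigma^2$-information from $K$ up to $H$. Note that Theorem \ref{subnormal} already yields the weaker conclusion $[\mu]\in\Sigma^2(H,\Z)$, since $\Sigma^2(K)\subseteq\Sigma^2(K,\Z)$; what remains is to verify the additional homotopical criterion, which demands a direct argument at the level of Cayley $2$-complexes.

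By Theorem \ref{teo-Sigma.de.gr.=.Sigma.de.subgr.} I would pass to finite-index subgroups $H_0\leq H$ and $K_0=K\cap H_0$ so that $H_0/K_0\cong\Z^d$ is torsion-free; this changes neither the hypothesis nor the conclusion. Then I would induct on $d$ using a normal tower $K_0=M_d\triangleleft M_{d-1}\triangleleft\cdots\triangleleft M_0=H_0$ with each $M_{i-1}/M_i\cong\Z$, reducing the problem to the base case $d=1$. In that case $H_0=K_0\rtimes_{\phi}\langle t\rangle$ splits because the cyclic quotient is free, and the hypothesis $\chi([H,H])=0$ forces $\chi\circ\phi=\chi$.

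For the base case, I would fix a finite presentation $K_0=\langle Y\mid S\rangle$ realizing $[\chi]\in\Sigma^2(K_0)$, so that the Cayley $2$-complex $\mathcal{C}_{K_0}$ has simply connected subcomplex $\mathcal{C}_{K_0}|_{(K_0)_\chi}$, and extend it to the finite presentation $H_0=\langle Y,t\mid S,\,\{tyt^{-1}\phi(y)^{-1}:y\in Y\}\rangle$. The resulting Cayley complex $\mathcal{C}_{H_0}$ has a layered structure: horizontal slices $(\mathcal{C}_{K_0})_n$ indexed by $n\in\Z$, joined by vertical $t$-edges and ``commuting square'' $2$-cells coming from the conjugation relations. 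Given a loop $\gamma\subseteq\mathcal{C}_{H_0}|_{(H_0)_\mu}$, I would homotope $\gamma$ inside $(H_0)_\mu$ into a concatenation of horizontal arcs within individual slices alternating with vertical $t$-edges, then fill each horizontal subloop using the $\Sigma^2(K_0)$-hypothesis applied to the slice at height $n$ (where $\mu$ restricted to that slice equals $\chi$ shifted by $n\mu(t)$), and fill vertical excursions by stacks of commuting squares.

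The main obstacle is ensuring that all filling disks remain inside $(H_0)_\mu$: the slice-level fillings must be chosen with uniform control on how deep they dip in the $\chi$-direction, echoing the pushing argument from the proof of Theorem \ref{diffthm}. The cleanest way I see is to apply the homotopical analog of Theorem \ref{BR-compact} due to Renz to the singleton $\{[\chi]\}\subseteq\Sigma^2(K_0)$, producing a finite family of cellular chain endomorphisms of the universal cover that strictly raise the $\chi$-value of each $2$-cell by a uniform positive amount, together with cellular chain homotopies to the identity. These endomorphisms lift $\Z$-equivariantly to $\mathcal{C}_{H_0}$ (using that $\chi\circ\phi=\chi$) and can then be combined with the commuting-square $2$-cells to assemble a filling of $\gamma$ that stays inside $(H_0)_\mu$, completing the verification of $[\mu]\in\Sigma^2(H_0)=\Sigma^2(H)$.
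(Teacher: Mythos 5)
Your reduction is exactly the one the paper uses: pass to a finite-index subgroup so that $H/K$ becomes free abelian (justified by the homotopical version of Theorem \ref{teo-Sigma.de.gr.=.Sigma.de.subgr.}, i.e.\ \cite[Cor.~2.7]{Me}), then induct on the rank to reach the case $H_0=K_0\rtimes\mathbb{Z}$; your observation that $\chi([H,H])=0$ forces $\chi\circ\phi=\chi$ is correct and is what makes the induction close up. Where you diverge is the base case: the paper disposes of $K_0\rtimes\mathbb{Z}$ in one line by citing Meinert's Theorem~B of \cite{Me} on $\Sigma^2$ of HNN extensions, applied with $A=B=K_0$ (the hypotheses $\mu|_{K_0}=\chi\neq 0$, $[\chi]\in\Sigma^1(K_0)$ and $[\chi]\in\Sigma^2(K_0)$ all follow from your assumptions), whereas you propose to reprove this special case by hand with a layered van Kampen filling in the Cayley $2$-complex of the split extension. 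Your sketch has the right picture (slices isomorphic to Cayley complexes of $K_0$ twisted by $\phi^n$, corridors of commuting squares, uniform control of the dip via a compactness criterion), and this is essentially how Meinert's theorem is proved; but be aware of two points if you carry it out. First, the device you invoke should be the diagram-based homotopical criterion (Theorem \ref{thmcpct} applied to the singleton $\{[\chi]\}$), not chain endomorphisms with chain homotopies --- the latter is the homological criterion of Theorem \ref{BR-compact} and only controls $\Sigma^2(\cdot,\mathbb{Z})$, which, as you note, you already have from Theorem \ref{subnormal}. Second, after pushing $\gamma$ into normal form the horizontal pieces are arcs, not loops; the loops to be filled by the $\Sigma^2(K_0)$ data arise only after comparing an arc with its image under a corridor of conjugation cells, so the filling is really a corridor argument rather than a slice-by-slice one. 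Neither point is a fatal gap, but both are precisely the technical content of the theorem the paper chooses to cite, so the citation is the more economical route; your version buys self-containedness at the cost of redoing that work.
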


\begin{proof}
 Notice that $H/K$ is a finitely generated abelian group. By the homotopical version of Theorem \ref{teo-Sigma.de.gr.=.Sigma.de.subgr.} (see \cite[Cor.~2.7]{Me}) we can pass to a finite-index subgroup of $H$ and assume that $H/K$ is free-abelian, so $H$ is built from $K$ by some extensions by $\mathbb{Z}$. By induction we may actually assume that $H = K \rtimes \mathbb{Z}$, and the proposition will follow. 
 
 By \cite[Thm.~B]{Me} if $G$ is an HNN extension with base group $B$ that is finitely presented and associated subgroup $A$ that is finitely presented too and $\mu: G \to \mathbb{R}$ is a character such that $\mu |_A \not= 0$, $[\mu |_A] \in \Sigma^1(A), [\mu|_B] \in \Sigma^2(B)$ then $[\mu] \in \Sigma^2(G)$. We can apply this for $G = H = K \rtimes \mathbb{Z}, B = A = K$ to complete the proof.
 \end{proof}

\subsection{Renz's criteria}
Let $G$ be a finitely generated group and let $X$ be a finite generating set. Suppose that $[\chi] \in S(G)$ is a non-zero character. If $w = x_1 \cdots x_n$ is a word on the generators $X^{\pm 1}$, the \textit{valuation} of $w$ with respect to $\chi$ is 
\[ v_{\chi}(w) = min \{ \chi(x_1 \cdots x_j) \mid 0 \leq j \leq n\}.\]

\begin{theorem} \cite{Renz}  \label{crit1}  Let $G = \langle X \rangle$ with $X$ finite. Then
$[\chi] \in \Sigma^1(G)$ if and only if there is an element $t \in X^{\pm 1}$ such that $\chi(t) > 0$ and such that for each $x \in X^{\pm 1}$ there is a word $w_x$ that represents $t^{-1}xt$ in $G$ and satisfies
\[ v_{\chi}(w_x) > v_{\chi}(t^{-1}xt).\] 
\end{theorem}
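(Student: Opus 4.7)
The plan is to work with the characterization that $[\chi] \in \Sigma^1(G)$ if and only if $\Gamma_\chi$ is connected, and reformulate both implications as statements about paths in the Cayley graph $\Gamma$ of $G$ with prescribed lower bounds on $\chi$-values. Throughout, set $c := \chi(t)$; a direct computation on the four vertices traced by the word $t^{-1}xt$ starting at $1$ gives $v_\chi(t^{-1}xt) = -c + \min\{0,\chi(x)\}$. Consequently the inequality $v_\chi(w_x) > v_\chi(t^{-1}xt)$ is equivalent, via left translation by $t$, to the existence of a path from $t$ to $xt$ in $\Gamma$ whose minimum $\chi$-value strictly exceeds $\min\{0,\chi(x)\}$.

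For the forward direction, since $\chi \neq 0$ and $X$ generates $G$, some $t \in X^{\pm 1}$ has $\chi(t) > 0$. Because left multiplication by any $g_0 \in G$ is a graph automorphism of $\Gamma$, the translate $g_0 \Gamma_\chi$ equals the full subgraph $\Gamma_{\chi \geq \chi(g_0)}$ and is connected whenever $\Gamma_\chi$ is. Choosing $g_0 = t$ if $\chi(x) \geq 0$ and $g_0 = xt$ otherwise, so that $\chi(g_0) = c + \min\{0,\chi(x)\}$, yields a connected subgraph containing both $t$ and $xt$; any path between them has minimum $\chi$-value at least $c + \min\{0,\chi(x)\} > \min\{0,\chi(x)\}$, and its sequence of edge labels spells the required word $w_x$.

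For the reverse direction, the crux is a \emph{lifting lemma}: setting $\epsilon := \min_{x \in X^{\pm 1}}\bigl(v_\chi(w_x) - v_\chi(t^{-1}xt)\bigr) > 0$, for each $k \geq 1$ and each $x \in X^{\pm 1}$ there is a word $w_x^{(k)}$ representing $t^{-k}xt^k$ with $v_\chi(w_x^{(k)}) \geq v_\chi(t^{-k}xt^k) + k\epsilon$. I would prove this by induction on $k$: when $w_x^{(k)} = y_1 \cdots y_n$, set $w_x^{(k+1)} := w_{y_1}\cdots w_{y_n}$, which represents $t^{-(k+1)}xt^{k+1}$ via the identity $t^{-1}(y_1\cdots y_n)t = (t^{-1}y_1 t)\cdots (t^{-1}y_n t)$. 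A prefix-by-prefix estimate using the inductive bound on $v_\chi(w_{y_i})$ gives $v_\chi(w_x^{(k+1)}) \geq v_\chi(w_x^{(k)}) + \epsilon - c$, which combined with the identity $v_\chi(t^{-(k+1)}xt^{k+1}) = v_\chi(t^{-k}xt^k) - c$ closes the induction.

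Given the lemma, for any $g \in G_\chi$ fix a word $u = x_1 \cdots x_n$ representing $g$ and form $U_k := t^k \cdot w_{x_1}^{(k)} w_{x_2}^{(k)}\cdots w_{x_n}^{(k)} \cdot t^{-k}$, which still represents $g$. A further prefix-by-prefix computation yields $v_\chi(U_k) \geq \min\{0,\, v_\chi(u) + k\epsilon,\, \chi(g)\}$, which is non-negative once $k$ is large enough (using $\chi(g) \geq 0$ and $\epsilon > 0$). Hence $U_k$ traces a path from $1$ to $g$ inside $\Gamma_\chi$, so $\Gamma_\chi$ is connected. The main obstacle I expect to be the inductive step of the lifting lemma, where one must simultaneously control the linear decay $-kc$ in $v_\chi(t^{-k}xt^k)$ and the cumulative gain $k\epsilon$ from the substitutions; the prefix bookkeeping must be organized so that each substituted letter contributes exactly the needed $\epsilon - c$, and the remaining arguments are straightforward manipulations in the Cayley graph.
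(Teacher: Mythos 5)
The paper states this result as a citation to Renz and contains no proof of its own, so there is nothing internal to compare against; your argument is correct and is essentially the standard proof of the $\Sigma^1$-criterion. Both directions check out: the forward direction via the observation that the translate $g_0\Gamma_\chi$ is the full subgraph on $\{g \mid \chi(g)\geq \chi(g_0)\}$, and the converse via the iterated-conjugation lifting lemma, whose key inductive inequality $v_{\chi}(w_x^{(k+1)}) \geq v_{\chi}(w_x^{(k)}) + \epsilon - c$ follows correctly from the prefix estimate $\chi(y_1\cdots y_{i-1}) + \min\{0,\chi(y_i)\} \geq v_{\chi}(w_x^{(k)})$.
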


Suppose now that $G$ is a finitely presentable group and let $\langle X|R \rangle$ be a finite presentation. 
Let $M$ be a (van Kampen) diagram over $\langle X|R \rangle$. As usual, $M$ comes equipped with a base point $x_0$ (a vertex) on its boundary. In the original Renz definition   any vertex of $M$ is labeled by an element of $G$ as follows: $x_0$ has label $1_G$ and if $x$ is another vertex, then its label is the image $g \in G$ of the label of any edge-path connecting $x_0$ and $x$ inside $M$. \iffalse{\color{red} Here we have a more general version that the label of the  base point can be any element $g_0$ of $G$ and if $x$ is another vertex, then its label is $g_0 g \in G$, where $g \in G$ is the image of the label of any edge-path connecting $x_0$ and $x$ inside $M$. This more general version is obtained from  the original one by translation with an element of $G$.}\fi

Now, for any $[\chi] \in S(G)$ and for any diagram $M$, we can define:
\[ v_{\chi}(M) = min \{ \chi(g) \mid \hbox{$g$ labels a vertex of $M$}\}.\]

Suppose that $[\chi] \in \Sigma^1(G)$. Let $t$ and $\{w_x\}_x$ be as in Theorem \ref{crit1}. For a word $r=x_1 \cdots x_n$, with $x_i \in X^{\pm 1}$, we put:
\[ \widehat{r}_t = w_{x_1} \cdots w_{x_n}.\]

\begin{theorem} \cite{Renz}  \label{crit2}
Let $G$ be a finitely presented group and let $G = \langle X | R \rangle$ be a finite presentation. Let $[\chi] \in \Sigma^1(G)$. With $t$ and $\{w_x\}$ as above, suppose further that $R \supseteq \{ t^{-1}xtw_x^{-1} \mid x \in X^{\pm 1} \}$. Then $[\chi] \in \Sigma^2(G)$ if and only if for each $r \in R$, there is a diagram $M$ such that $\partial M = \widehat{r}_t$ and $v_{\chi}(M) + \chi(t) > v_{\chi}(r)$.
\end{theorem}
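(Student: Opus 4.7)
The plan is to interpret $\Sigma^2(G)$ geometrically in terms of the Cayley $2$-complex $\mathcal C$ associated with $\langle X\mid R\rangle$: $[\chi]\in\Sigma^2(G)$ is equivalent to the existence of a uniform constant $D\ge 0$ such that every edge-loop $\ell$ in $\mathcal C$ bounds a van Kampen diagram $\Delta$ with $v_\chi(\Delta)\ge v_\chi(\ell)-D$. With this reformulation in hand, both implications of Theorem \ref{crit2} become translations between this global filling property and the stated condition on the individual relators. The inclusion $R\supseteq\{t^{-1}xtw_x^{-1}\}$ will play the key role of providing, at the diagram level, the combinatorial moves that implement $t$-conjugation.

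For the ``only if'' direction, fix $r\in R$. A direct computation using the defining inequality $v_\chi(w_x)>v_\chi(t^{-1}xt)$ shows that $v_\chi(\widehat r_t)>v_\chi(r)-\chi(t)$: the vertex reached after reading $w_{x_1}\cdots w_{x_i}$ sits at height $\chi(x_1\cdots x_i)\ge v_\chi(r)$, while any interior vertex of the $i$-th block $w_{x_i}$ sits at height $\chi(x_1\cdots x_{i-1})+v_\chi(w_{x_i})>\chi(x_1\cdots x_{i-1})+v_\chi(t^{-1}x_it)\ge v_\chi(r)-\chi(t)$. Using $[\chi]\in\Sigma^2(G)$, choose an initial filling $\Delta_0$ of $\widehat r_t$ with $v_\chi(\Delta_0)\ge v_\chi(\widehat r_t)-D$. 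I would then iteratively ``push upwards'' those $2$-cells of $\Delta_0$ whose base point lies below $v_\chi(r)-\chi(t)$: each such cell is replaced by its $t$-conjugate, glued back in by collar diagrams built out of the relators $t^{-1}xtw_x^{-1}\in R$. Each substitution strictly raises the $\chi$-height of the affected cell by $\chi(t)>0$, so after $\lceil D/\chi(t)\rceil$ iterations no cell remains below $v_\chi(\widehat r_t)$. The resulting diagram $M$ satisfies $v_\chi(M)\ge v_\chi(\widehat r_t)>v_\chi(r)-\chi(t)$, as required.

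For the ``if'' direction, fix any loop $\ell$ in $\mathcal C$ and any initial van Kampen diagram $\Delta_0$ for $\ell$. For each $2$-cell of $\Delta_0$ labeled by $r\in R$ and based at $g\in G$, I would replace it by the translate $g\cdot M_r$, attaching a collar of diagrams built from the relators $t^{-1}xtw_x^{-1}$ that converts the boundary $grg^{-1}$ of the cell into $gt^{-1}\widehat r_t\, tg^{-1}$. By the hypothesis $v_\chi(M_r)+\chi(t)>v_\chi(r)$, the substituted piece contributes vertices at $\chi$-heights exceeding those of the replaced cell by at least $\delta:=\min_{r\in R}\bigl(v_\chi(M_r)+\chi(t)-v_\chi(r)\bigr)>0$. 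Iterating this procedure produces a filling of $\ell$ whose $\chi$-valuation is bounded below by $v_\chi(\ell)-D$ for a constant $D$ depending only on $(X,R,t,\{w_x\})$, so $[\chi]\in\Sigma^2(G)$ by the geometric reformulation.

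The main obstacle will be the combinatorial bookkeeping needed to verify that each round of substitutions actually assembles into a bona fide planar van Kampen diagram, and that the iteration terminates with the claimed uniform control on valuations. This mirrors the ``push the loop upward'' algorithm from the Bieri-Renz paper \cite{B-Renz} exploited in Subsection \ref{difficult}, with $t$ serving as the element used to raise $\chi$-values and the auxiliary relators $t^{-1}xtw_x^{-1}$ providing the local combinatorial moves that realise $t$-conjugation at the diagram level.
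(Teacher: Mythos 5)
The paper does not prove this statement (it is quoted from Renz), so your proposal can only be judged on its own terms. Your ``if'' direction is essentially the standard argument: replace a cell labeled $r$ based at $g$ by the collar of $t^{-1}xtw_x^{-1}$-cells together with the translate $gt\cdot M_r$ of the hypothesized diagram, and push minimal vertices upward; this is exactly the mechanism the paper itself uses later (proof of Proposition \ref{converse}), and, modulo the bookkeeping you acknowledge and modulo the uniform-defect reformulation of $\Sigma^2$ that you assume without proof, it is sound.

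The genuine gap is in your ``only if'' direction. There, ``replacing a low cell by its $t$-conjugate glued in by collar diagrams'' is not an available move: if the cell is labeled by a relator $r'$, cutting it out and inserting the collar of $t^{-1}xtw_x^{-1}$-cells leaves a hole whose boundary is the loop labeled $\widehat{r'}_t$, and $\widehat{r'}_t$ is in general \emph{not} a relator, so the hole must be filled by some van Kampen diagram for $\widehat{r'}_t$. A filling with controlled valuation for each relator is precisely the conclusion you are trying to establish, so the step is circular; with an arbitrary filling the claim that ``each substitution strictly raises the $\chi$-height of the affected cell by $\chi(t)$'' has no justification. (Replacing the cell $(r',g)$ by the genuinely translated cell $(r',tg)$ does not help either, since the segment from $g$ to $tg$ is not an edge of the Cayley complex, so no collar of the given relators connects the two boundary loops.) A correct route is different: set $w_1=\widehat{r}_t$ and iterate the hat operation, $w_{k+1}=\widehat{(w_k)}_t$; stacking the collars between $w_k$ (based at $t^{k-1}$) and $w_{k+1}$ (based at $t^k$) one checks, using $v_\chi(w_x)>v_\chi(t^{-1}xt)$, that the absolute heights of the successive loops increase by at least $\delta=\min\{\chi(t),\,\min_x(v_\chi(w_x)-v_\chi(t^{-1}xt))\}>0$; for $K$ with $(K-1)\delta>D$ one caps the top loop $w_K$ by a defect-$D$ filling provided by $[\chi]\in\Sigma^2(G)$, and the resulting diagram has boundary $\widehat{r}_t$ and valuation $\geq v_\chi(\widehat{r}_t)>v_\chi(r)-\chi(t)$. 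Your computation of the inequality $v_\chi(\widehat{r}_t)>v_\chi(r)-\chi(t)$ is correct and is indeed the estimate this repaired argument needs, but as written your pushing scheme for this direction does not work.
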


The boundary $\partial M$ of $M$ above is read from the base point in any direction. What we really need is the version of Theorem \ref{BR-compact} for $\Sigma^2$, which is the following.

\begin{theorem}  \cite{Renzthesis}  \label{thmcpct}
Let $G$ be a finitely presentable group and let $\Gamma \subseteq S(G)$ be a non-empty compact set. Then $\Gamma \subseteq \Sigma^2(G)$ if and only if there exist a finite presentation $G = \langle X | R \rangle$, a finite set $\mathcal{W}$ of words with letters in $X^{\pm 1}$ and a finite set $\mathcal{M}$ of diagrams over $\langle X | R \rangle$ such that for each $[\chi] \in \Gamma$, there exists $t \in X^{\pm 1}$ such that $\chi(t)> 0$ and
\begin{enumerate}
 \item For any $x \in X^{\pm 1}$ there is some $w_{x} \in \mathcal W$ that represents $t^{-1}xt$ in $G$ and $v_{\chi}( w_x)>v_{\chi}(t^{-1}xt)$.
 \item For any $r \in R^{\pm 1}$ there is some $M_{\widehat{r}_t} \in \mathcal{M}$ such that $\partial M_{ {  \widehat{r}_t}}= \widehat{r}_t$ and $v_{\chi}(M_{  \widehat{r}_t}) + \chi(t) > v_{\chi}(r)$, where $\widehat{r}_t$ is computed as above.
\end{enumerate}
\end{theorem}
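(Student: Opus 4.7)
The plan is to establish the two directions separately. The ``if'' direction is essentially pointwise: assuming the presentation $\langle X | R\rangle$ and finite sets $\mathcal W,\ \mathcal M$ are given, fix any $[\chi] \in \Gamma$ and let $t$, $\{w_x\}_{x \in X^{\pm 1}}$ be the element and words provided by the hypothesis. The listed valuation conditions match exactly those of Theorem~\ref{crit1}, so $[\chi] \in \Sigma^1(G)$. After noting that $R$ may be assumed to contain the relators $t^{-1}xt w_x^{-1}$ (they are consequences of $R$, and adding them does not change the group), condition~(2) provides for each $r \in R$ a diagram $M_{\widehat r_t}$ with $v_\chi(M_{\widehat r_t}) + \chi(t) > v_\chi(r)$, which is precisely the hypothesis of Theorem~\ref{crit2}. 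Hence $[\chi] \in \Sigma^2(G)$, and since $[\chi]$ was arbitrary, $\Gamma \subseteq \Sigma^2(G)$.

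For the ``only if'' direction I would argue by compactness. Fix a finite presentation $\langle X | R \rangle$ of $G$. For every $[\chi] \in \Gamma \subseteq \Sigma^2(G) \subseteq \Sigma^1(G)$, Theorems~\ref{crit1} and~\ref{crit2} produce a triple $(t(\chi),\{w_x(\chi)\}_x,\{M_r(\chi)\}_r)$ satisfying the required strict valuation inequalities. The crucial observation is that all these conditions —$\chi(t)>0$, $v_\chi(w_x) > v_\chi(t^{-1}xt)$, and $v_\chi(M_r(\chi)) + \chi(t) > v_\chi(r)$— are strict inequalities between finite minima of $\chi$-values over finite sets of group elements. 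Consequently the set of characters $[\psi] \in S(G)$ for which this same triple continues to fulfil the inequalities is an open neighbourhood $U_\chi$ of $[\chi]$, by continuity of the evaluation maps $[\psi]\mapsto \psi(g)/\|\psi\|$. The family $\{U_\chi\}_{[\chi] \in \Gamma}$ is an open cover of the compact set $\Gamma$, so it admits a finite subcover $U_{\chi_1},\dots,U_{\chi_N}$.

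It remains to package the $N$ chosen triples into a single uniform datum. For this I would enlarge $X$ to include each $t(\chi_j)$ (if necessary) and adjoin to $R$ the finitely many relators $t(\chi_j)^{-1} x\, t(\chi_j)\, w_x(\chi_j)^{-1}$ for all $j$ and $x \in X^{\pm 1}$; these are consequences of the original relations, so the group is unchanged and the diagrams $M_r(\chi_j)$ remain van Kampen diagrams over the enlarged presentation. Set $\mathcal W = \{ w_x(\chi_j)\}_{j,x}$ and $\mathcal M = \{M_{\widehat r_t}(\chi_j)\}_{j,r}$. For any $[\chi] \in \Gamma$, pick $j$ with $[\chi]\in U_{\chi_j}$; the triple indexed by $\chi_j$ then satisfies all the required inequalities at $\chi$ itself, which yields the asserted data.

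The main obstacle is the uniformity step: one must ensure that enlarging the presentation once suffices to accommodate all $t(\chi_j)$ simultaneously without destroying the validity of any of the previously chosen diagrams. This is a labelling/bookkeeping issue rather than a conceptual one, since each $M_r(\chi_j)$ continues to bound $\widehat r_{t(\chi_j)}$ over the enlarged presentation and the valuations of its vertices are independent of the relator set. The substantive content of the argument is therefore the continuity-of-valuation observation combined with compactness of $\Gamma$ in $S(G)$.
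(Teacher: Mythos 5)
The paper does not actually prove this statement: it is quoted from Renz's thesis \cite{Renzthesis} as a known compact-set version of the pointwise criteria, so there is no in-text proof to compare against. Your reconstruction is the standard argument and is correct in outline: the ``if'' direction is a pointwise application of Theorems \ref{crit1} and \ref{crit2}, and the ``only if'' direction rests on the observation that each of the defining conditions ($\chi(t)>0$, $v_\chi(w_x)>v_\chi(t^{-1}xt)$, $v_\chi(M)+\chi(t)>v_\chi(r)$) is a strict inequality between minima of $\chi$ over finite subsets of $G$, hence homogeneous and open on $S(G)$, so that compactness of $\Gamma$ yields finitely many triples that suffice. This is exactly the mechanism behind Theorem \ref{BR-compact} as well.

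The one point you cannot dismiss as bookkeeping is condition (2) for the adjoined conjugation relators. Theorem \ref{crit2} presupposes $R \supseteq \{t^{-1}xtw_x^{-1}\}$ and its diagram condition quantifies over \emph{all} of $R$; likewise, condition (2) of Theorem \ref{thmcpct} quantifies over $R^{\pm 1}$ for the final presentation. After you enlarge $R$ to $R'=R\cup\{t(\chi_j)^{-1}x\,t(\chi_j)\,w_x(\chi_j)^{-1}\}_{j,x}$, your set $\mathcal{M}$ contains diagrams only for the original relators, so condition (2) is unverified precisely for the new ones, and in the ``if'' direction the hypothesis likewise hands you no diagrams for them. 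The repair is to run the extraction in two passes: first use openness and compactness to fix the finite $\Sigma^1$-data $(t_j,\{w_x(\chi_j)\})$ and enlarge the presentation once and for all; then, for each $[\chi]\in\Gamma$, apply the forward direction of Theorem \ref{crit2} to the \emph{enlarged} presentation (legitimate, since $\Gamma\subseteq\Sigma^2(G)$ and $\Sigma^2$ is presentation-independent) to obtain diagrams for every $r\in R'$, note that the resulting inequalities are again open in $[\chi]$, and extract a second finite subcover to assemble $\mathcal{M}$. For the ``if'' direction one must similarly supply diagrams for $\widehat{\rho}_t$ when $\rho$ is itself a conjugation relator; these can be built from the conjugation cells with controlled valuation, as in Renz's proof of Theorem \ref{crit2}. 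With this two-stage organisation the argument closes.
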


{\it Remark:} The element $w_x$ actually depends on both $x$ and $\chi$.

\subsection{The converse}
\begin{prop} \label{converse}
Let $[H,H] \subseteq K \subseteq H$ be groups such that $H$ and $K$ are finitely presentable. Let $\chi : K \to \R$ be a character such that $\chi([H,H]) = 0$. Suppose that $[\mu] \in \Sigma^2(H)$ for every character $\mu : H \to \R$ that extends $\chi$. Then $[\chi] \in \Sigma^2(K)$.
\end{prop}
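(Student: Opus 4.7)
The strategy mirrors the proof of the homological Theorem \ref{diffthm}, with van Kampen diagrams playing the role of chains, Renz's homotopical compactness criterion (Theorem \ref{thmcpct}) replacing the Bieri-Renz criterion (Theorem \ref{BR-compact}), and a local surgery on diagrams replacing the algebraic pushing move built from chain homotopies. First, using the homotopical version of Theorem \ref{teo-Sigma.de.gr.=.Sigma.de.subgr.} (see \cite[Cor.~2.7]{Me}), I pass to a finite-index subgroup $H_0 \leq H$ containing $[H,H]$ such that both $Q_0 = H_0/[H,H]$ and $H_0/K_0$ (with $K_0 = H_0 \cap K$) are torsion-free. Then $\chi_0 = \chi|_{K_0}$ vanishes on $[H_0,H_0]$ and the hypothesis becomes: every character $\mu_0 : H_0 \to \R$ extending $\chi_0$ satisfies $[\mu_0] \in \Sigma^2(H_0)$. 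I import the geometric setup of Section \ref{difficult}: $V = Q_0 \otimes \R \cong \R^k$ with its standard inner product, $W = K_0/[H,H] \otimes \R \subseteq V$, vectors $c_{\chi_0} \in W$ and $c_{\mu_0} \in V$, and regions $D_r$ together with their boundary $B_r$.

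Next I apply Theorem \ref{thmcpct} to the compact set
\[\Gamma = \{[\mu_0] \in S(H_0) \mid \mu_0|_{K_0} = \chi_0\} \cup S(H_0,K_0) \subseteq \Sigma^2(H_0),\]
obtaining a finite presentation $H_0 = \langle X \mid R \rangle$, a finite set of words $\mathcal W$, and a finite set of diagrams $\mathcal M$ such that for every $[\nu] \in \Gamma$ there is a letter $t = t_\nu \in X^{\pm 1}$ with $\nu(t)>0$, together with shortening words $w_x \in \mathcal W$ and filling diagrams $M_{\widehat r_t} \in \mathcal M$ satisfying the uniform valuation inequalities. From the finiteness of these data I extract positive constants $s,t$ analogous to \eqref{def-s} and \eqref{def-t}, and then set $\epsilon_0$ and $a$ as in \eqref{def-constants}. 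The homotopical analogue of the Claim in Section \ref{difficult} is the following statement: given any van Kampen diagram $M$ (over a fixed finite presentation of $K_0$) filling a loop $z$, with $\alpha$ computed on the $\pi$-images of vertex labels, if $\alpha(M) > \alpha(z)$ and $\alpha(M) \geq a$, then there is a diagram $\widetilde M$ with $\partial \widetilde M = z$ and $\alpha(\widetilde M) < \alpha(M)$. To prove it, pick an extremal vertex $v_0$ of $M$ labelled $g$ with $\pi(g) \in B_r$, $r = \alpha(M)$; the character $\nu$ defined as in \eqref{def-nu} lies in $\Gamma$, and supplies via Theorem \ref{thmcpct} a stable letter $t_\nu$, words $w_x$ shortening $t_\nu^{-1} x t_\nu$, and diagrams $M_{\widehat r_{t_\nu}}$ filling the associated conjugated relators. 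The local surgery at $v_0$ replaces each edge of $M$ incident to $v_0$ by the corresponding $w_x$-path, and fills the gap at every $2$-cell of $M$ touching $v_0$ by the diagram $M_{\widehat r_{t_\nu}}$; this is the exact geometric translation of replacing $c'$ by $\varphi(c') - \sigma\partial(c') + c''$ in \eqref{def-new}. The Pythagorean estimate \eqref{eq-h1}--\eqref{eq-h2} now guarantees that all newly created vertex labels project into $D_{r-\epsilon_0}$, so after finitely many such moves we obtain $\widetilde M$.

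With the pushing claim in hand, for every $r \geq a$ and every $j \in \R$ the subcomplex of the universal cover of the presentation $2$-complex of $K_0$ carved out by $\widetilde{\bf F}_{r,\chi_0 \geq j}$ is $1$-connected relative to a uniform shift $r_0$, exactly as in \eqref{B-R12-1}--\eqref{B-R12-2}. Applying Renz's homotopical version of \cite[Thm.~3.2]{B-Renz} yields $[\chi_0] \in \Sigma^2(K_0)$, and then Theorem \ref{teo-Sigma.de.gr.=.Sigma.de.subgr.} gives $[\chi] \in \Sigma^2(K)$. The main obstacle is the delicate realisation of the chain-level move $c \mapsto c + \partial\sigma(c')$ as an honest van Kampen diagram surgery: one must verify that the pieces $w_x$ and $M_{\widehat r_{t_\nu}}$ supplied by Theorem \ref{thmcpct} can be glued around $v_0$ to produce a planar, simply connected diagram filling the same boundary loop, and that in doing so no uncontrolled vertex labels are introduced. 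Once this geometric bookkeeping is carried out, the valuation estimates and the final Bieri-Renz filtration argument proceed in parallel to the homological case.
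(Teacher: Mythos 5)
Your overall strategy is the same as the paper's: reduce to $H_0$ and $K_0 = H_0\cap K$, apply Renz's compactness criterion (Theorem \ref{thmcpct}) to the compact set $\Gamma = \{[\mu_0] \mid \mu_0|_{K_0}=\chi_0\}\cup S(H_0,K_0)$, extract uniform positive constants from the finite data $\mathcal W$, $\mathcal M$, push extremal vertices inward by a local surgery governed by the character $\nu$ attached to an extremal vertex as in (\ref{def-nu}), and finish with the sandwich of valuation subcomplexes. However, there is one genuine problem with your plan as written: you propose to carry out the surgery on van Kampen diagrams ``over a fixed finite presentation of $K_0$''. The surgery pieces supplied by Theorem \ref{thmcpct} are the letter $t\in X^{\pm 1}$, the words $w_x$ and the diagrams $M_{\widehat r_t}$, all of which live over the presentation $\langle X\mid R\rangle$ of $H_0$; and when the extremal vertex $g$ satisfies $\pi(g)\in S(w,r)$ with $w\in W_{\chi_0\geq 0}$ (case 1 of the homological proof), the relevant character $\nu$ vanishes on $K_0$, so necessarily $\nu(t)>0$ forces $t\notin K_0$ and the modified path or diagram leaves the Cayley complex of $K_0$ altogether. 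Hence the pushing move cannot be realised inside a presentation complex of $K_0$. The paper's resolution is to work throughout in the Cayley complex $\mathcal C$ of $H_0$, prove that the subgraphs $C_d$ and subcomplexes $\mathcal C_d$ (and their shifted versions $\mathcal C_{d,\chi\geq j}$) are connected and simply connected for large $d$, and only at the end restrict to the cocompact free $K_0$-subcomplex $\widetilde{\mathcal C}_d$ (the preimage of a ball in $H_0/K_0\otimes\R$) and apply Meinert's criterion for actions on $2$-complexes \cite[Theorem A]{Me}, rather than a presentation-complex criterion for $K_0$. This exactly mirrors the homological proof, where ${\bf F}$ is a resolution over $\Z H_0$ and ${\bf \widetilde F}_r$ is merely a cocompact $\Z K_0$-subcomplex of it.

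A second, smaller omission: for $\Sigma^2$ one must control $\pi_0$ as well as $\pi_1$ of the valuation subcomplexes, so besides the two-dimensional diagram surgery you also need the one-dimensional pushing argument (using only $t$ and the words $w_x$) showing that the subgraphs $C_d$ and $C_{d,\chi\geq j}$ are connected; the paper establishes this first, as a separate Claim, before the simple-connectivity step. With these two corrections your argument coincides with the paper's.
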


We will use here the same notations as in Section \ref{difficult} except that $t$ here denotes element of $X^{ \pm 1}$ and not some positive real number. In particular, $H_0 \subseteq H$ is a finite-index subgroup such that $[H,H] \subseteq H_0$ and both $H_0 / [H,H]$ and $H_0/K_0$ are torsion-free, where $K_0 = K \cap H_0$. 
We will also keep the notations for $V$, $W$, $D_r$, $\alpha$ and so on. We denote $\chi_0 = \chi |_{K_0}$ and $\mu_0 = \mu |_{H_0}$ for any extension $\mu: H \to \R$ of $\chi$. Again by \cite[Cor.~2.7]{Me} we have that $[\mu _0] \in \Sigma^2(H_0)$ for all extensions $\mu: H \to \R$ of $\chi$ and, once we show that $[\chi_0] \in \Sigma^2(K_0)$, then also $[\chi] \in \Sigma^2(K)$.

Let $H_0 = \langle X |R \rangle$ be a finite presentation of $H_0$ satisfying the conditions of Theorem \ref{thmcpct}
with respect to $$\Gamma = S(H_0,K_0) \cup \{   [\mu_0]  \mid  \mu_0 |_{K_0} = \chi_0\}  \subseteq \Sigma^2(H_0).$$ Thus we are given the finite sets $\mathcal{W}$ and $\mathcal{M}$ of words and diagrams satisfying the required properties (1) and (2) from Theorem \ref{thmcpct}. 

We define now some constants that will appear in the sequence. First, let
\[ a_1= inf \{v_{\varphi}(w_x) - v_{\varphi}(t^{-1} x t)\},\]
where the infimum is taken over all $x \in X^{\pm 1}$, all $\varphi$ such that $[\varphi] \in \Gamma$ and $\| \varphi\|=1$, all $t \in X^{\pm 1}$ such that $\varphi(t)>0$, and all $w_x \in \mathcal{W}$ such that $t^{-1}xt = w_x$ in $H_0$ and $v_{\varphi}(w_x) > v_{\varphi}(t^{-1}xt)$. Similarly, we put
\[ a_2= inf \{ v_{\varphi}({M_{\widehat{r}_t}}) +\varphi (t) - v_{\varphi}(r)\},\]
 where the infimum is taken over all $r \in R^{\pm 1}$, $\varphi \in \Gamma$ with $|| \varphi || = 1$, all $t \in X^{\pm 1}$ such that $\varphi(t)>0$ and all $M_{\widehat{r}_t} \in \mathcal{M}$  such that \iffalse{$\widehat{r}_t$ arises from conjugation of $r$ by $t$}\fi $v_{\varphi}({M_{\widehat{r}_t}}) +  \varphi(t) > v_{\varphi}(r)$. Notice that  since $\Gamma$ is a compact set and by the conditions of Theorem \ref{thmcpct}  both $a_1$ and $a_2$ are positive real numbers. We define
 \begin{equation}  \label{def.a}
 a= inf\{ a_1, a_2\} .
 \end{equation}
We also define
\[ b_1 = sup \{ \| \pi(tw) \|, \| \pi(x^{-1}tw) \| \},\]
where the supremum runs over all $t,x \in X^{\pm 1}$ and all initial subwords $w$ of all $w'\in \mathcal{W}$, 
and $$\pi : H_0 \to Q_0 = H_0 / [H,H]$$ is the canonical projection. We define
\[ b_2 = sup \{ \| \pi(tg h^{-1}) \| \},\]
where the supremum runs over all $t \in X^{\pm 1}$ and all $g$ and $h$ are vertices of some $M \in \mathcal{M}$, for all possible $M$. Finally, we set
\begin{equation}  \label{def.b}
 b= sup \{b_1, b_2, 2a\}.
\end{equation}

Let $C$ be the Cayley graph of $H_0$ with respect to the generating set $X$. Define $C_d$ as the full subgraph of $C$ spanned by the vertices $h \in H_0$ such that $\alpha(\{h\}) \leq d$,  where $\alpha$ was defined in (\ref{Delta}).

Fix some $0< \epsilon< a$. 

	\medskip
	{\bf Claim } {\it For $d>max\{\frac{b^2-\epsilon^2}{2(a-\epsilon)}, a\}$, the subgraph $C_d$ is connected.}
		\medskip

{\bf Proof of the Claim} Let $h_0$ be a vertex of $C_d$ and let $\gamma$ be an edge-path in $C$ connecting $1=1_{H_0}$ and $h_0$. Let $p$ be the label of $\gamma$ and let $Vert(\gamma)$ be the set of vertices of $\gamma$. If $\alpha(Vert(\gamma)) \leq d$, then $\gamma$ runs inside $C_d$. Otherwise, let $g \in Vert(\gamma) \smallsetminus \{1,h_0\}$ such that
\[ \alpha(Vert(\gamma)) = \alpha (\{g\}) = s > d.\]
As in the proof of Theorem \ref{diffthm}, there must be some $w\in W_{\chi \geq 0}$ such that $\| w-\pi(g) \|=s$ and such that the equivalence class of the character $\nu: H_0 \to \R$ defined by $$\nu(h) = \langle \pi(h), \frac{w-\pi(g)}{ \|w- \pi(g) \|} \rangle$$ lies in $\Gamma$. Notice that $\nu$ attains its minimum on $Vert(\gamma)$ at $g$.

Choose $t \in X^{\pm 1}$ such that $\nu(t)>0$ as in Theorem \ref{thmcpct}. Write $p= x_1  \cdots x_n$ for some $x_1, \ldots, x_n \in X^{ \pm 1}$, so $g = x_1 \cdots x_j$ for some $j <n$. We can find words $w_j, w_{j+1} \in \mathcal{W}$ such that $t^{-1}x_i t = w_i$ and $v_{\nu}(w_i)> v_{\nu}( t^{-1}x_i t)$ for $i=j,j+1$. Thus the path $\gamma'$ starting at $1$ and having label \[p' = x_1 \cdots x_{j-1} t w_j w_{j+1} t^{-1} x_{j+2} \cdots x_n\]
also ends at $h_0$.

Let $y$ be a vertex in $Vert(\gamma') \smallsetminus Vert(\gamma)$. Thus either $y = g t w'$, where $w'$ is an initial subword of $w_{j+1}$, or $y = g x_j^{-1}tw'$, where $w'$ is an initial subword $w_j$. In any case we have
\begin{equation}  \label{rest1}
  \| \pi(y) - \pi(g) \| \leq b,
\end{equation}
where $b$ is defined as in \eqref{def.b}.

On the other hand, we have:
\[ \nu(y)= \nu(g) + \nu(tw'),\]
in case $y = g t w'$ and $w'$ is an initial subword of $w_{j+1}$,  or 
\[ \nu(y)= \nu(g) + \nu(x_j^{-1}tw')\] 
if $y = g x_j^{-1}tw'$, where $w'$ is an initial subword of $w_j$. Using that $\nu$ attains its minimum on $Vert(\gamma)$ at $g$, we have in any case
\begin{equation} \label{rest2}
 \nu(y) \geq \nu(g) + a_1 \geq \nu(g) + a,
 \end{equation}
where $a$ is defined  in \eqref{def.a}.

The argument with Pythagoras' Theorem applied to restrictions \eqref{rest1} and \eqref{rest2} implies that $\|\pi(y)-w\|<s-\epsilon$, so $\alpha(\{y\})< \alpha(\{g\})-\epsilon$. It follows that 
either $\alpha(Vert(\gamma')) < s-\epsilon$, or 
$s - \epsilon \leq \alpha(Vert(\gamma')) \leq s$, but $\gamma'$ has less vertices $g'$ with $\alpha(\{g'\})\geq s-\epsilon$.

All of this implies that $C_d$ is connected for $d>max\{\frac{b^2-\epsilon^2}{2(a-\epsilon)}, a\}$. Indeed, if $h \in C_d$ is a vertex and $\gamma$ is any path connecting $1$ and $h$ in $C$, then it can be transformed by some applications of the procedure described above to another path $\gamma_2$ such that 
\[\alpha(Vert(\gamma_2))\leq max \{\alpha(Vert(\gamma))-\epsilon, d\}.\] Then we iterate this until we find $\gamma_n$ such that $\alpha(Vert(\gamma_n)) \leq d$, so that $1$ and $h$ are connected inside $C_d$. {\iffalse{This procedure stops since the the number of operations needed is bounded in terms of the length of the initial path $\gamma$, the length of the words $w_x \in \mathcal{W}$ {\color{blue} and $\alpha(Vert(\gamma))$}.}\fi This proves the claim.

\begin{inote}  \label{Rem1}
 This argument actually shows that the full subgraphs $C_{d,  \chi_0 \geq j}$ spanned by the vertices $h \in C_d$
  such that $\pi(h) \in D_{d, \chi_0 \geq j}$ are connected for any $j$ and for sufficiently large $d$, where  $D_{d,\chi_0 \geq j}$ is defined in (\ref{needed}). By definition $C_d  = C_{d, \chi_0 \geq 0}$ and the proof for $C_d$ works for $C_{d, \chi_0 \geq j}$ with the only difference that the path $\gamma$ starts at a fixed  vertex in $C_{d, \chi_0 \geq j}$ instead of $ 1 = 1_{H_0}$. \iffalse{
  To see this start instead with a path $\gamma$ connecting two points of $C_{d, \chi \geq j}$ that runs inside $C_{\mu_0 \geq j}$ for some extension $\mu_0:H_0 \to \R$ of $\chi_0$. Then  $\gamma \subseteq C_{s, \chi \geq j}$ for some $s$, and the transformation $\gamma \mapsto \gamma_2$ is defined in a way that $\gamma_2$ is a path in $C_{s', \chi \geq j}$ for some $s'\leq s$.}\fi
\end{inote}

Now we consider the Cayley complex $\mathcal{C}$ of $H_0$ associated to the presentation $\langle X |R\rangle$ and the full subcomplex $\mathcal{C}_d$, spanned by all $h \in H_0$ such that $\alpha(\{h\}) \leq d$, for $d>max\{\frac{b^2-\epsilon^2}{2(a-\epsilon)}, a\}$. We already know that it is connected since $C_d$ is the $1$-skeleton of $\mathcal{C}_d$. We want to show that $\mathcal{C}_d$ is also simply-connected.

Let $\gamma \subseteq \mathcal{C}_d$ be a closed path with basepoint $1$ and label $p=x_1 \cdots x_n$. Since $\mathcal{C}$ is simply-connected, there is a diagram $D$ over $\langle X|R\rangle$ such that $\partial D = \gamma$. If 
$\alpha(Vert(D)) \leq d$, we are done. Otherwise there is an internal vertex $g$ such that 
\[ \alpha(Vert(D)) = \alpha (\{g\}) = s > d.\]
We consider again the character $\nu: H_0 \to \R$, defined by $\nu(h) = \langle \pi(h),\frac{ w- \pi(g)}{ \|w - \pi(g)\|} \rangle$, where $w\in W_{\chi_0 \geq 0}$ and $\|w-\pi(g)\| = s$. Recall that $[\nu] \in \Gamma$.

Choose $t \in X^{\pm 1}$ such that $\nu(t)>0$. Then for each $r\in \mathcal{R}^{\pm 1}$ we can find a diagram $M_{\hat{r}_t} \in \mathcal{M}$ satisfying the conditions of the $\Sigma^2$-criterion,  i.e. (2) from Theorem \ref{thmcpct}. This, together with the cells associated to $t^{-1}xtw_x^{-1}$ for the letters $x$ of $r$, gives a new diagram $M_r'$ such that $\partial M_r'= r$ and such that 
$\nu(h) >v_{\nu}(r)$ for all vertices $h$ of $M_r'$ that do not lie on the boundary of the diagram. 

We transform $D$ as follows: we substitute the cells $e_r$, associated to relators $r$, having $g$ as a vertex, with the diagrams $M_r'$. By canceling the pairs of adjacent cells $e_{\rho}$ and $e_{\rho^{-1}}$ for some relators $\rho$ of type $t^{-1}xtw_x^{-1}$, we obtain a new diagram $D'$ that has the same boundary but does not contain $g$. This is the same argument as in the proof of the $\Sigma^2$-criterion (Theorem \ref{crit2}) in \cite{Renz}. Any vertex $y$ of $D'$ that was not a vertex of $D$ can be written as $y=gtw'$, where $w'$ is the image in $G$ of the label of a path from a vertex $v_0 = gt$ from the boundary of some $M_{\widehat{r}_t}$ to some other vertex  of $M_{\widehat{r}_t}$.

By the definition of $b_2$ we have
\begin{equation}  \label{rest3}
  || \pi(y) - \pi(g) || \leq b_2 \leq  \  b.
\end{equation}

Recall that $r$ is a relation that defines a 2-cell $e_r$, i.e. the closed path $\partial e_r$ has label $r$ and that $g$ is a vertex of $e_r$. Without loss of generality we can assume that $g$ is the base point of this cell (a similar argument works when the base point is different from $g$). Thus if $r = x_1 \ldots x_m$ for $x_1, \ldots, x_m \in X^{  \pm 1}$ the consecutive vertices of $\partial( e_r)$ are $g, g \overline{x}_1, \ldots, g \overline{x}_1 \ldots \overline{x}_i, \ldots, g \overline{x}_1 \ldots \overline{x}_{m-1}$, where $\overline{x}_i$ is the image of $x_i$ in $G$. Since $\nu(g)$ is minimal among $\{\nu(v) ~| v \in Vert(D) \}$ we conclude that $v_{\nu}(r) = 0$.

Recall that $M_{\widehat{r}_t}$ is attached at the vertex $gt$, that is actually the base point (as $t$ is the base point of $\partial ( e_r) = r$). In the original Renz definition a base point  is labeled by $1_G$ but in the Cayley complex it is labeled by $gt$, hence we obtain that
$$v_{\nu}(M_{\widehat{r}_t}) = min \{ \nu(h) - \nu(gt) ~ | ~ h \in Vert( M_{\widehat{r}_t} ) \} \leq \nu(gtw') - \nu(gt).$$
Thus  we have
\begin{equation} \label{rest4}
 \nu(y) = \nu(gtw') \geq  \nu(gt) + v_{\nu}(M_{\widehat{r}_t}) \geq \nu(gt) + v_{\nu}(r) - \nu(t) + a_2 =  \end{equation} $$  \nu(gt) - \nu(t) + a_2 \geq  \nu(gt) - \nu(t) + a = \nu(g) + a.
$$

 Again by Pythagoras' Theorem we find that the new vertices $y$ of $D'$ satisfy $\alpha(\{y\}) <\alpha(\{g\})-\epsilon$. This procedure defines the transformation $D \mapsto D_2$ such that 
 \[ \alpha(Vert(D_2)) \leq max \{\alpha(Vert(D))-\epsilon,d\}.\]
 We can iterate this to produce a diagram $D_n$ with $\partial D_n = \partial D$ and $\alpha(Vert(D_n)) \leq d$. 
  This completes the proof of the fact that $\mathcal{C}_{d}$ is simply connected for sufficiently big $d$.

 \begin{inote}
  Again, we can use the argument to show that the full subcomplex $\mathcal{C}_{d, \chi \geq j}$ spanned by the vertices of $C_{d,\chi \geq j}$ is simply-connected for any $j$ and for $d$ big enough.
 \end{inote}

 {\bf Proof of Proposition \ref{converse}} Fix $d$ such that $\mathcal{C}_d$ is $1$-connected. Let $\widetilde{\mathcal{C}}_d$ be the full subcomplex of $\mathcal{C}$ spanned by the vertices $h$ such that $\pi_0(h) \in \widehat{B}_d$, where $\pi_0: H_0 \to H_0/K_0 \simeq \mathbb{Z}^{k-m}$ is the canonical projection and as defined  before in the proof of Theorem \ref{diffthm}  $\widehat{B}_d$ is the closed ball with center $0$ and radius $d$ in $H_0/K_0 \otimes \mathbb{R}$. Notice that $\widetilde{\mathcal{C}}_d$ is a free $K_0$-complex with finitely many orbits of cells, and it is also $1$-connected by the original argument in \cite{Renzthesis}.
 
Let $\pi_1: H_0 \to K_0/[H,H]$ be the projection defined in (\ref{pi-1}). Let $\beta:H_0 \to \R$ be the composite of $\pi_1$ with 
the induced character $\bar{\chi_0}:  K_0/[H,H] \to \R$. Then $\beta$ extends to a $\chi_0$-equivariant regular height function (in the sense of  \cite[Section~2]{Me}) defined on $\widetilde{\mathcal{C}}_d$. We can then consider the valuation subcomplexes $(\widetilde{\mathcal{C}}_d)_{\beta \geq j}$: this is simply the full subcomplex of $\widetilde{\mathcal{C}}_d$ spanned by the vertices $h$ such that $\beta(h) \geq j$.

Finally, we can find some fixed negative numbers $j_1>j_2$, depending only on $d$, such that 
\[(\widetilde{\mathcal{C}}_d)_{\beta \geq j} \subseteq \mathcal{C}_{d, \chi \geq j+j_1} \subseteq (\widetilde{\mathcal{C}}_d)_{\beta\geq j+j_2}\]
for all $j$. Since $\mathcal{C}_{d, \chi \geq j}$ is $1$-connected for all $j$, the induced maps 
\[\pi_i ((\widetilde{\mathcal{C}}_d)_{\beta \geq j}) \to \pi_i((\widetilde{\mathcal{C}}_d)_{\beta \geq j+j_2})\] are trivial for $i=0,1$. Thus $[\chi_0] \in \Sigma^2(K_0)$ by \cite[Theorem A]{Me}.

\end{document}